\documentclass[a4paper,11pt]{amsart}
\usepackage[utf8]{inputenc}
\usepackage[english]{babel}

\usepackage{amsfonts}
\usepackage{arydshln}
\usepackage{appendix}
\usepackage[english]{babel}
\usepackage{amsmath,amssymb,verbatim,mathrsfs,latexsym,paralist}
\usepackage{graphicx}
\usepackage{epstopdf}
\usepackage{indentfirst}
\usepackage{multirow}
\usepackage{stmaryrd}
\usepackage{amsthm}
\usepackage{longtable}
\allowdisplaybreaks[4]
\usepackage{float}

\newcommand{\ad}{\mathrm{ad}}
\usepackage{color}

\newcommand{\C}{\mathbb{C}}
\newcommand{\R}{\mathbb{R}}

\newcommand{\sll}{\mf{sl}}
\renewcommand{\u}{\mf{u}}
\renewcommand{\v}{\mf{v}}
\newcommand{\s}{\mf{s}}
\newcommand{\w}{\mf{w}}
\renewcommand{\c}{\mf{c}}

\newcommand{\GL}{\mathrm{GL}}
\newcommand{\SL}{\mathrm{SL}}

\newcommand{\mf}{\mathfrak}
\newcommand{\g}{\mf{g}}

\newcommand{\ssl}{\mf{sl}}
\renewcommand{\g}{\mf{g}}

\renewcommand{\u}{\mf{u}}
\renewcommand{\v}{\mf{v}}
\newcommand{\cG}{\mathcal{G}}

\newcommand{\su}{\mf{su}(2,1)}
\newcommand{\SU}{\mathrm{SU}(2,1)}
\newcommand{\suc}{\mf{su}(3)}
\newcommand{\SUc}{\mathrm{SU}(3)}

\usepackage[left=1.12in, right=1.12in, top=1.5in, bottom=1.5in]{geometry}

\newcommand{\diag}{\mathrm{diag}}

\newcommand{\Gal}{\mathrm{Gal}}

\numberwithin{equation}{section}
\newtheorem{theorem}{Theorem}[section]
\newtheorem{proposition}[theorem]{Proposition}

\newtheorem{lemma}[theorem]{Lemma}

\theoremstyle{remark}
 
\theoremstyle{remark}
\newtheorem{rmk}[theorem]{Remark}

\title[Subalgebras of the real forms of $\mathfrak{sl}_3(\mathbb{C})$]{The Subalgebras of the Real Forms of  \(\mathfrak{sl}_3(\mathbb{C})\)}
\date{\today}

\begin{document}

\keywords{Real forms of the complex special linear algebra, real subalgebras, complex subalgebras, Galois cohomology}
\subjclass[2020]{17B05,  17B20,   17B30,  20G10,  20G20}

\begin{abstract}
We classify the subalgebras of the real forms the complex linear algebra $\mathfrak{sl}_3(\mathbb{C})$, namely the real special linear algebra $\mathfrak{sl}_3(\mathbb{R})$, the special unitary algebra $\mathfrak{su}(3)$, and the generalized special unitary algebra $\mathfrak{su}(2,1)$. Our approach applies Galois cohomology to the known classification of complex subalgebras of $\mathfrak{sl}_3(\mathbb{C})$.  The subalgebras of $\mathfrak{sl}_3(\mathbb{R})$ were previously classified by Winternitz using different techniques. We recover this classification using our cohomological approach and amend minor inaccuracies. Our work, however, constitutes the first complete classifications of the subalgebras of $\mathfrak{su}(3)$ and $\mathfrak{su}(2,1)$.
In addition to illuminating the internal structure of the real forms of $\mathfrak{sl}_3(\mathbb{C})$, our methodology provides a pathway for future investigations into the subalgebra structure of higher-dimensional cases. In addition, the present work and its extensions offer potential applications in representation theory, applied mathematics, and physics.
\end{abstract}

\author[A. Douglas]{Andrew Douglas$^{1,2}$}
\address[]{$^1$Department of Mathematics, New York City College of Technology, City University of New York, Brooklyn, NY, USA.}
\email{adouglas@citytech.cuny.edu}
\address[]{$^2$Ph.D. Programs in Mathematics and Physics, CUNY Graduate Center, City University of New York, New York, NY, USA.}

\author[W. A. de Graaf]{Willem A. de Graaf$^3$}
\address{$^3$Dipartimento di Matematica, Universit\'a di Trento, Trento, Italy}
\email{willem.degraaf@unitn.it}

\maketitle


\section{Introduction}

We classify the subalgebras of the real forms of the complex special linear algebra $\mathfrak{sl}_3(\mathbb{C})$, namely the real special linear algebra $\mathfrak{sl}_3(\mathbb{R})$, the special unitary algebra $\mathfrak{su}(3)$, and the generalized special unitary algebra $\mathfrak{su}(2,1)$.  Our approach combines Galois cohomology with the known classification of complex subalgebras of $\mathfrak{sl}_3(\mathbb{C})$ \cite{dr16a, dr16b, dr18}.

The subalgebras of $\mathfrak{sl}_3(\mathbb{R})$ were previously classified by Winternitz \cite{win04} using different techniques. We recover this classification using our cohomological approach and amend minor inaccuracies. Our work, however, constitutes the first complete classifications of the subalgebras of $\mathfrak{su}(3)$ and $\mathfrak{su}(2,1)$ (cf. \cite{ddg2}).
In addition to illuminating the internal structure of the real forms of $\mathfrak{sl}_3(\mathbb{C})$, our methodology provides a pathway for future investigations into the subalgebra structure of higher-dimensional cases. In addition, the present work and its extensions offer potential applications in representation theory, applied mathematics, and physics.

Classifications of real subalgebras of real semisimple Lie algebras are considerably less common than those of their complex counterparts. However, substantial work in this area does exist beyond Winternitz's classification of real subalgebras of $\mathfrak{sl}_3(\mathbb{R})$ \cite{win04}; none of which, to the best our knowledge, apply Galois cohomology. For instance, in $1977$, Patera and Winternitz classified the subalgebras of all real Lie algebras of dimension no more than four \cite{pw77}. 
Sugiura classified the Cartan subalgebras of real simple Lie algebras \cite{sug}.  Patera, Winternitz, and Zassenhaus examined maximal abelian subalgebras of real and complex symplectic Lie algebras \cite{pwz}.
Patera, Hussin, and Winternitz classified the parabolic subalgebras of the split real form $G_{2(2)}$ of the complex exceptional Lie algebra $G_2$ (see \cite{janisse}). 
And in his $2023$ doctoral dissertation, Janisse completed the classification of the real subalgebrs of $G_{2(2)}$ \cite{janisse}.

The article is organized as follows. Section 2 overviews  a few applications of subalgebras of semisimple Lie algebras in physics, applied mathematics, and representation theory. 
Section 3 provides background information on Galois cohomology. In Section 4, we present our methodology for using Galois cohomology to classify real subalgebras of real semisimple Lie algebras, based on corresponding classifications over the complex numbers. Sections 5, 6, and 7 apply this procedure to the cases of $\mathfrak{sl}_3(\mathbb{R})$,  $\mathfrak{su}(2,1)$, and  $\mathfrak{su}(3)$, respectively. The appendix summarizes the classification of complex subalgebras of $\mathfrak{sl}_3(\mathbb{C})$ from \cite{dr16a, dr16b, dr18}. It also contains
lists of the stabilizers of the subalgebras in $\SL_3(\C)$.

\section{A few  applications of subalgebras of semismple Lie algebras}

In this section, we  briefly describe a few applications of subalgebras of semisimple Lie algebras in physics, applied mathematics, and representation theory, providing  context for the significance of the results and focus of this paper.

The application of Lie groups and their associated Lie algebras is particularly fruitful in the study of differential equations (refer to \cite{haydon, olver} for details). By utilizing the symmetry group of a differential equation, one can often reduce its complexity, transforming a partial differential equation (PDE) into an ordinary differential equation (ODE) or lowering the order of an ODE.

The origins of such applications can be traced back to Sophus Lie. In his work \cite{sophus} (see also \cite{win04}), Lie demonstrated that the symmetry group of a second-order ODE of the form \( y'' = f(x, y, y') \) is \(\mathrm{SL}_3(\mathbb{F})\) (where \(\mathbb{F} = \mathbb{R}\) or \(\mathbb{C}\)) if and only if the equation is linearizable by a point transformation. Furthermore, Winternitz explains that ``A classification of subgroups of \(\mathrm{SL}_3(\mathbb{F})\) provides a classification of possible ways that the symmetry of a linear, or linearizable ODE, can be broken by imposing additional conditions on solutions" \cite{win04}.

Physical models provide additional applications of Lie algebras and their subalgebras, the Interacting Boson Model being one example. This model provides a theoretical framework in nuclear physics designed to describe the collective properties of nuclei in a unified way~\cite{boson}. The theory uses chains of subalgebras, which need to be explicitly described in applications. A deeper understanding of subalgebra structures may also offer new insights into symmetry breaking phenomena.

Examinations and classifications of subalgebras of real and complex semisimple Lie algebras not only illuminate the internal structure of these algebras, but also serve as powerful tools in representation theory. For instance, such classifications facilitate a systematic analysis of branching rules and induction procedures. In connection with branching rules, they also enable detailed investigations of the representation-theoretic behavior of subalgebras, such as determining which subalgebras are \emph{wide} (a subalgebra of a semisimple Lie algebra is called wide if every irreducible representation of the ambient algebra remains indecomposable upon restriction). At present, wide subalgebras are not fully understood; the best-developed theory pertains to regular subalgebras (cf.~\cite{dr25}).

\section{Preliminaries on Galois cohomology}

Here we deal with Galois cohomology relative to the Galois group $\Gamma = \Gal(\C/\R)$. 
For a general treatment, we refer to the book by Serre (\cite{serre}).

Write $\Gamma = \{1,\gamma\}$. Let $\cG$ be a group on which $\Gamma$ acts by automorphisms. 
Let $X$ be a set on which $\Gamma$ acts. Define the maps $\sigma : \cG\to\cG$, $\sigma : X\to X$
by $\sigma(g) = \gamma\cdot g$, $\sigma(x) = \gamma\cdot x$ for $g\in \cG$, $x\in X$. 
We also suppose that $\cG$ acts 
$\Gamma$-equivariantly on $X$, that is, $\sigma(g) \cdot \sigma(x) = \sigma (g\cdot x)$.

An element $g\in \cG$ is called a {\em cocycle} if $g\sigma(g) = 1$. We remark that since we 
use the Galois group $\Gamma$, this is equivalent to, but not the same as, the usual definition. 
Two cocycles $g_1$, $g_2$ are {\em equivalent} if there is an $h\in \cG$ with $h^{-1} g_1  \sigma(h)=g_2$. The set of equivalence classes is called the first cohomology set of $\cG$ and
denoted $H^1 (\cG,\sigma)$, or also just $H^1 \cG$ if $\sigma$ is clear. For a cocycle $g\in \cG$
we denote its equivalence class in $H^1\cG$ by $[g]$.

By $\cG^\sigma$, $X^\sigma$ we denote the elements of $\cG$ and $X$ respectively that are 
fixed under $\sigma$. Let $x_0\in X^\sigma$ and let $Z_{\cG}(x_0) = \{ g\in \cG \mid 
g\cdot x=x\}$ be its stabilizer in $\cG$. We have a natural map $i_* : H^1 Z_{\cG}(x_0)\to 
H^1 \cG$, mapping the class of a cocycle in $H^1 Z_{\cG}(x_0)$ to its class in $H^1 \cG$.
By definition, its kernel is the set of all classes that are sent to the trivial class, i.e.,
$$\ker i_* = \{ [c]\in H^1 Z_{\cG}(x_0) \mid c \text{ is equivalent to 1 in } \cG\}.$$
We now have the following theorem for whose proof we refer to 
\cite[Section I.5.4, Corollary 1 of Proposition 36]{serre}. 

\begin{theorem}\label{thm:galco}
Let the notation be as above. Let $Y=\cG\cdot x_0$ be the orbit of $x_0$.  The orbits of 
$\cG^\sigma$ in $Y^\sigma$ are in bijection with $\ker i_*$. The bijection goes as follows:
let $[c]\in \ker i_*$, then there is a $g\in \cG$ with $g^{-1} \sigma(g) = c$ and $[c]$ 
corresponds to the $\cG^\sigma$-orbit of $g\cdot x_0$.
\end{theorem}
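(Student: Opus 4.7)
The approach is to verify that the recipe given in the statement defines a well-defined map $\Phi$ from $\cG^\sigma$-orbits on $Y^\sigma$ to $\ker i_*$, and then to construct an explicit inverse $\Psi$. Given $y\in Y^\sigma$, choose $g\in\cG$ with $y=g\cdot x_0$ (possible since $y\in Y=\cG\cdot x_0$) and set $c_g:=g^{-1}\sigma(g)$. The first round of checks is routine: (i) $c_g\in Z_\cG(x_0)$, because $\sigma(g)\cdot x_0=\sigma(y)=y=g\cdot x_0$; (ii) $c_g\sigma(c_g)=g^{-1}\sigma(g)\sigma(g)^{-1}g=1$ using $\sigma^2=\mathrm{id}$, so $c_g$ is a cocycle in $Z_\cG(x_0)$; and (iii) the identity $c_g=g^{-1}\cdot 1\cdot\sigma(g)$ exhibits $c_g$ as trivial in $H^1\cG$, so $[c_g]\in\ker i_*$. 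I would then show that the class $[c_g]\in H^1 Z_\cG(x_0)$ depends neither on the choice of $g$ nor on the representative of its $\cG^\sigma$-orbit: any two choices of $g$ for the same $y$ differ by right multiplication by an element $h\in Z_\cG(x_0)$, and a direct calculation yields $c_{gh}=h^{-1}c_g\sigma(h)$; replacing $y$ by $k\cdot y$ with $k\in\cG^\sigma$ replaces $g$ by $kg$ and leaves $c_g$ unchanged because $\sigma(k)=k$.

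For the inverse $\Psi$, given $[c]\in\ker i_*$ I would pick a representative $c\in Z_\cG(x_0)$, use that $c$ is equivalent to $1$ in $\cG$ to extract $g\in\cG$ with $g^{-1}\sigma(g)=c$, and define $\Psi([c])$ to be the $\cG^\sigma$-orbit of $g\cdot x_0$. The check $\sigma(g)\cdot x_0=gc\cdot x_0=g\cdot x_0$, using $c\in Z_\cG(x_0)$, shows this orbit lives in $Y^\sigma$. The identity $\Phi\circ\Psi=\mathrm{id}$ is built into the construction.

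The only step with genuine content, and the main obstacle to be negotiated, is showing that $\Psi$ is well-defined---equivalently, that $\Phi$ is injective. Suppose $y_i=g_i\cdot x_0$ for $i=1,2$ and $c_{g_2}=h^{-1}c_{g_1}\sigma(h)$ for some $h\in Z_\cG(x_0)$; I must produce $k\in\cG^\sigma$ with $k\cdot y_2=y_1$. The natural candidate is $k:=g_1 h g_2^{-1}$: it carries $y_2$ to $y_1$ because $h$ fixes $x_0$, and the verification $\sigma(k)=k$ reduces, after substituting $\sigma(g_i)=g_ic_{g_i}$, to the identity $c_{g_1}\sigma(h)=hc_{g_2}$, which is precisely the given relation rearranged. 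This is the pivotal calculation, and it is where one uses crucially that the two cocycles are equivalent within $Z_\cG(x_0)$ rather than only within $\cG$: the twisting element $h$ must stabilise $x_0$ for $k\cdot y_2$ to equal $y_1$. Once this step is in place, everything else is bookkeeping with $\sigma$-twisted conjugation, and the main risk is notational rather than conceptual.
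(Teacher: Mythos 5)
Your proof is correct and complete. Note that the paper does not actually prove Theorem~\ref{thm:galco}; it cites \cite[Section I.5.4, Corollary 1 of Proposition 36]{serre}, and your argument is precisely the standard direct verification underlying Serre's statement: the well-definedness checks for $\Phi$, the computation $c_{gh}=h^{-1}c_g\sigma(h)$, and the pivotal injectivity step with $k=g_1hg_2^{-1}$ (where $\sigma(k)=k$ reduces to the cocycle-equivalence relation, and $h\in Z_{\cG}(x_0)$ is exactly what makes $k\cdot y_2=y_1$) are all accurate under the paper's conventions, in which $\sigma$ acts on $\cG$ by a group automorphism with $\sigma^2=\mathrm{id}$.
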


\subsection{Some facts on Galois cohomology sets of algebraic groups}\label{sec:H1facts}

We consider algebraic groups $G$ with an action by $\Gamma$. For these groups we assume that 
the involution $\sigma$ is a {\em complex conjugation}. This means that whenever $f$ is a 
regular function on $G$ also the function $f^\sigma$ given by $f^\sigma(g) = 
\overline{f(\sigma(g))}$ is regular (cf. \cite[Definition 1.7.4]{gowa}). 

By definition a torus is a connected diagonalizable algebraic group. If $T\subset \GL(n,\C)$ is
a torus then there exists an isomorphism $\chi : (\C^*)^d \to T$. Here we give the Galois
cohomology sets of some small-dimensional tori. For explanations, we refer to 
\cite[Examples 3.1.7]{bgl}, \cite[Theorem 3.6]{bg}. As above we suppose that the tori have
a $\Gamma$-action given by the complex conjugation $\sigma$. 

\begin{lemma}\label{lem:T1}
Let $T$ be a 1-dimensional torus, so that there is an isomorphism $\chi : \C^*\to T$. Then
\begin{enumerate}
\item If $\sigma(\chi(t)) = \chi(\bar t)$ then $H^1 T = \{ [\chi(1)] \}$.
\item If $\sigma(\chi(t))=\chi(\bar t^{-1})$ then $H^1 T = \{ [\chi(1)], [\chi(-1)] \}$.
\end{enumerate}
\end{lemma}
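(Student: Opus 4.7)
The plan is to unwind the definitions: since every element of $T$ has the form $\chi(t)$ with $t\in\C^*$, a cocycle $\chi(t)$ is determined by a single complex number $t$, and both the cocycle condition $\chi(t)\sigma(\chi(t))=1$ and the equivalence relation $\chi(s)^{-1}\chi(t_1)\sigma(\chi(s))=\chi(t_2)$ translate into elementary equations on $t,t_1,t_2,s\in\C^*$. I would handle the two cases in parallel, in each case first identifying the subset of $\C^*$ that parameterizes cocycles, then dividing out by the equivalence relation.

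For (1), the condition $\sigma(\chi(t))=\chi(\bar t)$ turns the cocycle condition into $t\bar t=1$, i.e.\ $|t|=1$, and the equivalence relation becomes $t_2 = s^{-1}t_1\bar s$; writing $s=re^{i\phi}$ gives $\bar s/s = e^{-2i\phi}$, which can be made to equal any unit-modulus number. Hence every cocycle is equivalent to $\chi(1)$, proving $H^1 T=\{[\chi(1)]\}$.

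For (2), the condition $\sigma(\chi(t))=\chi(\bar t^{-1})$ turns the cocycle condition into $t/\bar t=1$, i.e.\ $t\in\R^*$, and the equivalence relation becomes $t_2 = t_1/(s\bar s)=t_1/|s|^2$. As $s$ ranges over $\C^*$, $|s|^2$ ranges over $\R_{>0}$, so the equivalence classes are exactly the two connected components of $\R^*$, represented by $\chi(1)$ and $\chi(-1)$. This yields $H^1 T=\{[\chi(1)],[\chi(-1)]\}$.

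There is no real obstacle here: the argument is essentially bookkeeping, and the only step that requires a moment of thought is confirming that the map $s\mapsto\bar s/s$ in case (1) surjects onto the unit circle (so that arbitrary cocycles can be trivialized), and dually that in case (2) the image of $s\mapsto|s|^2$ is $\R_{>0}$ rather than all of $\R^*$ (so that the sign of $t$ is a genuine invariant).
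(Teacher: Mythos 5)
Your computation is correct and complete: in case (1) the cocycles are the unit circle and $s\mapsto \bar s/s$ surjects onto it, and in case (2) the cocycles are $\R^*$ and the equivalence divides by the norms $|s|^2\in\R_{>0}$, leaving exactly the two sign classes. The paper does not prove this lemma itself but defers to the cited references; your elementary unwinding is the standard argument (case (1) is Hilbert's Theorem 90 for $\Gal(\C/\R)$, and case (2) is the classical computation $\R^*/N_{\C/\R}(\C^*)\cong\Z/2$), so there is nothing to add or repair.
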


\begin{lemma}\label{lem:T2}
Let $T$ be a 2-dimensional torus with an isomorphism $\chi : (\C^*)^2 \to T$. Suppose that  
$\sigma(\chi(s,t))=\chi(\bar t, \bar s)$ or $\sigma(\chi(s,t))
= \chi(\bar t^{-1}, \bar s^{-1})$. In both cases $H^1 T = \{ [\chi(1,1)] \}$.
\end{lemma}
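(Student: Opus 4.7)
The plan is to imitate the direct strategy of Lemma~\ref{lem:T1}: parameterize $T$ via $\chi$, solve the cocycle equation $g\sigma(g)=1$ explicitly in each of the two cases, and then exhibit, for every cocycle, an element $h \in T$ with $h^{-1}\sigma(h) = g$. Since $T$ is abelian, the equivalence condition $h^{-1} g_1 \sigma(h) = g_2$ collapses to $g_1 g_2^{-1} = h^{-1}\sigma(h)$; hence producing such an $h$ is the same as showing $[g] = [\chi(1,1)]$ in $H^1 T$.

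First I would write $g=\chi(s,t)$, expand $g\sigma(g)$ using the given formula for $\sigma$, and read off a one-parameter family of cocycles. In case~(1) the equation $\chi(s\bar t, t\bar s) = \chi(1,1)$ forces $t=\bar s^{-1}$, so the cocycles are exactly the elements $\chi(s,\bar s^{-1})$ with $s\in\C^*$; in case~(2) the equation $\chi(s\bar t^{-1}, t\bar s^{-1}) = \chi(1,1)$ forces $t=\bar s$, so the cocycles are $\chi(s,\bar s)$. Next I would look for a coboundary trivializing each such $g$ by setting $h=\chi(a,b)$, computing $h^{-1}\sigma(h)$ from the given $\sigma$, and solving the resulting pair of equations in $a,b$ for any prescribed $s$. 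In both cases the system is linear in $\bar b$ after fixing $a$, and a one-line verification shows that $h=\chi(1,\bar s)$ works in case~(1) while $h=\chi(s^{-1},1)$ works in case~(2).

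The main obstacle is really only bookkeeping: one must be careful with the sign convention for the equivalence relation and with the direction of the coboundary (i.e.\ whether to solve $h^{-1}\sigma(h)=c$ or $\sigma(h)^{-1} h = c$, which for abelian $T$ amount to the same up to inversion). Conceptually nothing is deep: both tori are $\R$-forms of $(\C^*)^2$ whose Galois action is a permutation-of-factors twist (possibly composed with inversion), so they are isomorphic as $\R$-tori to the Weil restriction $R_{\C/\R}\C^*$. The vanishing of $H^1$ is then a special case of Hilbert's Theorem~90 via Shapiro's lemma; the hands-on computation sketched above is simply the explicit avatar of this general principle.
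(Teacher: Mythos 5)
Your proposal is correct and complete. The paper itself does not prove Lemma~\ref{lem:T2}; it defers to \cite[Examples 3.1.7]{bgl} and \cite[Theorem 3.6]{bg}, so your explicit computation supplies a self-contained argument where the paper gives only a citation. The details check out: in case (1) the cocycles are exactly $\chi(s,\bar s^{-1})$ and $h=\chi(1,\bar s)$ satisfies $h^{-1}\sigma(h)=\chi(1\cdot\overline{\bar s},\,\bar s^{-1}\cdot 1)=\chi(s,\bar s^{-1})$; in case (2) the cocycles are $\chi(s,\bar s)$ and $h=\chi(s^{-1},1)$ gives $h^{-1}\sigma(h)=\chi(s,\bar s)$. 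Your handling of the direction of the coboundary is also sound: for abelian $T$ the paper's equivalence $h^{-1}g_1\sigma(h)=g_2$ with $g_2=1$ reads $g_1=h\sigma(h)^{-1}$, and replacing $h$ by $h^{-1}$ converts this to the equation $h^{-1}\sigma(h)=g_1$ that you actually solve. The closing structural remark is accurate as well: in both cases the cocharacter lattice is $\Z[\Gamma]$ (in case (2) after the change of basis $e_2\mapsto -e_2$), so $T\cong R_{\C/\R}\C^*$ as an $\R$-torus and the vanishing of $H^1$ follows from Shapiro's lemma and Hilbert's Theorem 90 --- which is essentially the content of the references the paper points to. The only caveat is cosmetic: the phrase ``linear in $\bar b$'' is loose (the equations are multiplicative), but the exhibited solutions make this irrelevant.
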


Now we state Sansuc's lemma (\cite{sansuc}, see also \cite[Proposition 10.1]{bg}), which says
that when computing Galois cohomology we can work modulo the unipotent radical.

\begin{lemma}\label{lem:sansuc}
Let $G$ be a linear algebraic group with conjugation $\sigma$. Let $U\subset G$ be a unipotent 
algebraic normal subgroup, stable under $\sigma$. Let $H=G/U$ and consider the projection 
$p : G\to H$. The induced map $p_* : H^1 G\to H^1 H$ is bijective.
\end{lemma}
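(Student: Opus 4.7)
The plan is to deduce the lemma from the non-abelian cohomology sequence attached to the short exact sequence
$$1 \to U \to G \to H \to 1$$
of $\Gamma$-groups, combined with the fundamental vanishing $H^1 V = \{[1]\}$ for every $\Gamma$-stable unipotent linear algebraic group $V$ over $\C$.

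First I would prove the vanishing lemma by induction on $\dim V$, using the descending central series $V = V_1 \supset V_2 \supset \cdots$ whose successive quotients are abelian and, as $\Gamma$-modules, isomorphic to finite direct sums of copies of $(\C,+)$ with some $\R$-linear involution. For such an abelian piece the cohomology vanishes because it is a $\Q$-vector space under a finite group action, so averaging yields explicit primitives: a cocycle $z$ with $z+\sigma(z)=0$ equals the coboundary of $w = -z/2$. The inductive step uses the exact cohomology sequence for each central extension $1 \to V_i/V_{i+1} \to V/V_{i+1} \to V/V_i \to 1$: both surjectivity of $H^1(V/V_{i+1}) \to H^1(V/V_i)$ and triviality of its fiber over the base point follow from the vanishing of $H^1(V_i/V_{i+1})$ together with the inductive hypothesis $H^1(V/V_i) = \{[1]\}$.

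With the vanishing in hand, the six-term exact sequence of pointed sets
$$G^\sigma \to H^\sigma \xrightarrow{\delta} H^1 U \to H^1 G \xrightarrow{p_*} H^1 H$$
identifies the fiber of $p_*$ over the distinguished base point with the cokernel of $\delta$, and this is trivial because $H^1 U$ itself is trivial; hence $p_*$ is injective at $[1]$. Surjectivity follows from a parallel argument: given a cocycle $h\in H$ with an arbitrary lift $g_0\in G$, the element $g_0\sigma(g_0)\in U$ represents a class in $H^1$ of an inner-twisted copy of $U$ that is still unipotent and hence has trivial $H^1$; this provides the correction by an element of $U$ turning $g_0$ into a cocycle of $G$ mapping to $h$.

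The main obstacle is injectivity of $p_*$ at classes other than $[1]$, since $H^1$ is only a pointed set and exactness at $H^1 G$ controls only the fiber over the base point. I would resolve this by the standard twisting construction: given a cocycle $c\in G$, form the inner twist with $\sigma$-action $\sigma_c(x) = c\sigma(x)c^{-1}$, under which $c$ becomes the trivial cocycle and the normal subgroup $U$ acquires a new $\Gamma$-structure but remains unipotent as a $\C$-algebraic group. Applying the previous fiber analysis to the twisted short exact sequence identifies the fiber of $p_*$ over $[p(c)]$ with a singleton, completing the proof that $p_*$ is bijective.
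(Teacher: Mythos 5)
The paper does not actually prove this lemma: it is quoted as Sansuc's lemma with references to \cite{sansuc} and \cite[Proposition 10.1]{bg}, so there is no internal proof to compare against. Your proposal supplies the standard d\'evissage proof, and its overall architecture --- vanishing of $H^1$ for unipotent groups via the descending central series and the averaging trick $w=-z/2$, the exact sequence of pointed sets to kill the fiber over the base point, and twisting by a cocycle $c\in Z^1 G$ to transport that conclusion to every other fiber --- is correct and is essentially the argument found in the cited sources. The injectivity part is fine as written: since $c$ is a genuine cocycle, $\sigma_c(x)=c\sigma(x)c^{-1}$ really is an involution, ${}_cU$ is a $\Gamma$-stable unipotent group, and the twisted fiber analysis goes through.

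The one step that is not correct as stated is the surjectivity argument. If $h\in H$ is a cocycle and $g_0$ is an arbitrary lift, the element $u=g_0\sigma(g_0)\in U$ is \emph{not} a cocycle for the inner-twisted structure $\tau_0(v)=g_0\sigma(v)g_0^{-1}$: one has $u\tau_0(u)=u^2\neq 1$, and worse, $\tau_0^2=\mathrm{Int}(u)|_U$, so for non-abelian $U$ the ``inner-twisted copy of $U$'' does not even carry a well-defined involution (this is exactly the point where Serre must introduce liens and $H^2$). The condition for $vg_0$ to be a cocycle is $v\,\tau_0(v)\,u=1$, so the obstruction is the class of $u$ in $U^{\tau_0}/N_{\tau_0}(U)=\hat{H}^0(\Gamma,(U,\tau_0))\cong H^2$, not in $H^1$ of a twisted $U$. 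Both issues are repairable with tools you already set up: first reduce to abelian $U$ by running the surjectivity (and injectivity) argument up the descending central series of $U$, whose terms are characteristic in $U$, hence normal in $G$ and $\sigma$-stable; for abelian $U$ the map $\tau_0$ \emph{is} an involution, $\tau_0(u)=u$, and unique $2$-divisibility gives $v=-u/2$ with $v+\tau_0(v)=-u$. So the conclusion stands, but the obstruction group must be identified correctly and the reduction to abelian kernels made explicit before the twisting language is legitimate.
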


The next result is \cite[Proposition 3.3.16]{bgl}.

\begin{proposition}\label{prop:compodd}
Let $H\subset \GL(n,\C)$ be a linear algebraic group with conjugation $\sigma$. Let $H^\circ$
denote the identity component of $H$.  Suppose that $H/H^\circ$ is of cardinality $p^n$ for an
odd prime $p$ and $|H^1(H^\circ,\sigma)|<p$. Then the natural map $H^1(H^\circ,\sigma)
\to H^1(H,\sigma)$ is bijective. 
\end{proposition}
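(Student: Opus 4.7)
The plan is to analyze the long exact sequence of pointed sets
$$Q^\sigma \xrightarrow{\delta} H^1(H^\circ,\sigma) \xrightarrow{i_*} H^1(H,\sigma) \xrightarrow{\pi_*} H^1(Q,\sigma)$$
arising from the short exact sequence $1 \to H^\circ \to H \to Q \to 1$ of $\Gamma$-groups, where $Q = H/H^\circ$. Bijectivity of $i_*$ will follow from two ingredients: the vanishing $H^1(Q,\sigma) = \{[1]\}$, yielding surjectivity, and an orbit description of the fibers of $i_*$, yielding injectivity.

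For surjectivity, I verify $H^1(\Gamma, Q) = \{[1]\}$. Since $|Q \rtimes \Gamma| = 2 p^n$ with $p$ odd, the subgroup $\Gamma$ is a Sylow $2$-subgroup of $Q \rtimes \Gamma$. The order-two elements of $Q \rtimes \Gamma$ are precisely the pairs $(q,\sigma)$ with $q\sigma(q) = 1$, i.e., the cocycles in $Q$. By Sylow's theorem, every such element is conjugate to $(1,\sigma)$ in $Q\rtimes\Gamma$. A direct computation shows that whether the conjugating element has the form $(h,1)$ or $(h,\sigma)$, the conjugate of $(1,\sigma)$ equals $(h\sigma(h)^{-1},\sigma)$. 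Hence every cocycle $q$ is of the form $h\sigma(h)^{-1}$ for some $h \in Q$, which is precisely the condition $[q] = [1]$ in $H^1(\Gamma, Q)$. Exactness at $H^1(H,\sigma)$ then yields surjectivity of $i_*$.

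For injectivity, I invoke Serre's description (see \cite[\S I.5]{serre}) that the conjugation action of $H$ on its normal subgroup $H^\circ$ induces a natural action of $Q^\sigma$ on $H^1(H^\circ,\sigma)$, and that $i_*$ descends to an injection
$$Q^\sigma \backslash H^1(H^\circ,\sigma) \hookrightarrow H^1(H,\sigma)$$
whose image equals $\ker \pi_* = H^1(H,\sigma)$ (the last equality by surjectivity). Consequently, the fibers of $i_*$ are exactly the $Q^\sigma$-orbits on $H^1(H^\circ,\sigma)$. Each such orbit has cardinality dividing $|Q^\sigma|$, which divides $|Q| = p^n$, and is therefore a power of $p$. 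But each orbit lies inside $H^1(H^\circ,\sigma)$, whose cardinality is strictly less than $p$. The only power of $p$ strictly less than $p$ is $1$, so every orbit is a singleton; every fiber of $i_*$ is therefore a singleton, giving injectivity.

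The principal subtlety I anticipate is justifying the orbit-fiber correspondence in non-abelian Galois cohomology (Serre's description): this relies on $H^\circ$ being normal in $H$ (which it is) and a careful unpacking of the action formula $\beta \cdot [a] = [h a \sigma(h)^{-1}]$ for $\beta \in Q^\sigma$ with any lift $h \in H$. Once this formalism is in place, the proof reduces to the elementary divisibility observation that any power of $p$ strictly less than $p$ must be $1$.
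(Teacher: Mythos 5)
The paper does not actually prove this proposition---it is quoted from \cite[Proposition 3.3.16]{bgl}---so there is no in-paper argument to compare against. Your proof is correct and complete, and it is the standard argument one would expect the cited source to use: the Sylow computation in $Q\rtimes\Gamma$ cleanly establishes $H^1(\Gamma,Q)=\{[1]\}$ for the odd-order component group, which gives surjectivity via exactness, while the identification of the fibres of $i_*$ with the $Q^\sigma$-orbits on $H^1(H^\circ,\sigma)$ (precisely the statement \cite[I.\S 5, Proposition 39(ii)]{serre} that the paper itself invokes at the end of Section 3.1) forces every fibre to have $p$-power cardinality strictly less than $p$, hence to be a singleton. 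No gaps.
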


We end this section outlining a procedure for computing the first Galois
cohomology set of a non-connected algebraic group. In the sequel we use this
procedure on a few occasions. We refer to \cite{bg} for more details.

Let $Z\subset \GL(n,\C)$ be a linear algebraic group. Let $Z^\circ$ be the
identity component and $C=Z/Z^\circ$ the component group. Let $\sigma :
Z\to Z$ be a complex conjugation. The restriction of $\sigma$ to $Z^\circ$
is also a conjugation, and we also have an induced conjugation $\sigma$ on
$C$. Since $C$ is a finite group we can compute $H^1 (C,\sigma)$ by brute
force. Let $\pi : Z\to C$ be the projection, which induces a map
$\pi_* : H^1(Z,\sigma)\to H^1(C,\sigma)$, $\pi_*([z]) = [\pi(z)]$.
For $[c]\in H^1(C,\sigma)$ we consider the problem to compute the inverse
image $\pi_*^{-1}([c])$. Doing that for all elements of $H^1(C,\sigma)$
obviously solves our problem.

Suppose that there is a cocycle $z$ in $Z$ such that $\pi(z)=c$ (if there is
no such $z$ then $\pi_*^{-1}([c])$ is empty). Define a new conjugation
$\tau : Z\to Z$ by $\tau(g) = z\sigma(g)z^{-1}$. Let $C^\tau = \{
u\in C \mid \tau(u) = u\}$. We have a right action of $C^\tau$ on
$H^1(Z^\circ,\tau)$ by $[h]\cdot u = [\hat{u}^{-1} h \tau(\hat{u})]$, where
$\hat{u}\in Z$ is such that $\pi(\hat{u})=u$. Let $\{[h_1],\ldots,[h_s]\}$ be
a set of $C^\tau$-orbit representatives. Then $\pi_*^{-1}([c]) =
\{[h_1z],\ldots,[h_sz]\}$ (see  \cite[I.\S 5 Proposition 39(ii)]{serre},
\cite{bg}).

\section{Classifying  subalgebras of the real forms of  $\mathfrak{sl}_3(\mathbb{C})$ with Galois cohomology}

 In this section, we describe our procedure for classifying real subalgebras of a real form of $\mathfrak{sl}_3(\mathbb{C})$, up to conjugation in the corresponding Lie group, using Galois cohomology, given the corresponding classification over $\mathbb{C}$. 
We first establish the background theory, terminology, and notation. 

For ease of notation in this section, we set $G=\mathrm{SL}_3(\mathbb{C})$, and $\mathfrak{g}=\mathfrak{sl}_3(\mathbb{C})$. 
Define a conjugation  $\sigma : G\to G$ for each real form whose subalgebras we're examining: 
\begin{equation}
\begin{cases}
\sigma(g) = \bar{g},& ~\text{when classifying subalgebras of}~ \mathfrak{sl}_3(\mathbb{R}),\\
\sigma(g)  =  \bar{g}^{-t},& ~\text{when classifying subalgebras of}~ \suc,~\text{and}\\
\sigma(g) = N \bar{g}^{-t} N^{-1},& ~\text{when classifying subalgebras of}~ \su,\\
\end{cases}
\end{equation}
where 
\begin{equation}\label{hermitian}
N=\begin{pmatrix} 1&0&0\\0&1&0\\0&0&-1\end{pmatrix}.
\end{equation}
The map $\sigma$ is anti-scalar multiplicative in the case of $\mathfrak{sl}_3(\mathbb{C})$, and anti-multiplicative in the remaining two cases.  In each case, $\sigma$ is an involution.

The group of fixed points $G^\sigma = \{ g\in G \mid
\sigma(g)=g\}$ equals $\mathrm{SL}_3(\R) $, $\SUc$, or $\SU$, respectively. The differential of $\sigma$
(denoted by the same symbol) is the map $\sigma : \g\to\g$, with
\begin{equation}
\begin{cases}
\sigma(x) = \bar{x},& ~\text{when classifying subalgebras of}~ \mathfrak{sl}_3(\mathbb{R}),\\
\sigma(x)  =  - \bar{x}^{t},& ~\text{when classifying subalgebras of}~ \suc,~\text{and}\\
\sigma(x) = -N \bar{x}^t N^{-1},& ~\text{when classifying subalgebras of}~ \su.\\
\end{cases}
\end{equation}

In each case, $\sigma$ is an involution. It is anti-scalar multiplicative in the case of $\mathfrak{sl}_3(\mathbb{R})$, and anti-linear in the remaining two cases.
The algebra of fixed points $\g^\sigma = \{ x\in \g \mid \sigma(x)=x\}$ is equal to $\mathfrak{sl}_3(\mathbb{R})$, $\suc$, or $\su$,  respectively.

The group $G$ acts on the subalgebras of $\g$ by $g\cdot \u=g \u g^{-1}$.
As mentioned earlier, the orbits of this action have been classified in \cite{dr16a, dr16b, dr18}. Our objective is to determine the orbits of $G^\sigma$ in
$\g^\sigma$ using Galois cohomology. We note that a subalgebra $\u\subset \g$
has a basis in $\g^\sigma$ if and only if $\sigma(\u) = \u$. We say that
such subalgebras are {\em real}. For a subalgebra $\u$ of $\g$ we define its
stabilizer as
\begin{equation} \mathcal{Z}_G (\mathfrak{u})= \{ g\in G \mid g\cdot \u = \u\}.\end{equation}

We are now ready to describe our procedure. For each subalgebra representative $\mathfrak{u}$ in the classification of $\g$-subalgebras, we consider its $G$-orbit, denoted $\mathcal{O}$. We then determine if $\mathcal{O}$ contains a real subalgebra, which we refer to as a {\it real point} in $\mathcal{O}$.
If there are no real points, we disregard $\mathcal{O}$.
To identify real points in $\mathcal{O}$, we employ the following two results, the first of which is straightforward.

\begin{lemma}
If the $G$-orbit $\mathcal{O}$ of the $\g$-subalgebra $\mathfrak{u}$ has a real point, then $\sigma(\mathfrak{u}) \in \mathcal{O}$.
\end{lemma}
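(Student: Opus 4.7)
The plan is to exploit the equivariance of $\sigma$ with respect to the $G$-action on subalgebras of $\mathfrak{g}$. The first step is to record the identity
\begin{equation*}
\sigma(g\cdot\mathfrak{w}) \;=\; \sigma(g)\cdot\sigma(\mathfrak{w})
\end{equation*}
for every $g\in G$ and every subalgebra $\mathfrak{w}\subset\mathfrak{g}$. This is a short matrix computation in each of the three cases from the explicit formulas for $\sigma$ on $G$ and on $\mathfrak{g}$: the multiplicativity (respectively anti-multiplicativity) of $\sigma$ on $G$ combines with the minus sign built into the formula for $\sigma$ on $\mathfrak{g}$ so that the conjugation $x\mapsto gxg^{-1}$ is intertwined with $y\mapsto \sigma(g)\,y\,\sigma(g)^{-1}$, from which the displayed equality follows by applying $\sigma$ pointwise.

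Once this equivariance is available, the lemma is essentially one line. Since $\sigma:G\to G$ is bijective, applying $\sigma$ to the orbit $\mathcal{O}=G\cdot\mathfrak{u}$ yields $\sigma(\mathcal{O})=G\cdot\sigma(\mathfrak{u})$, which is again a single $G$-orbit. If $\mathfrak{v}\in\mathcal{O}$ is a real point, i.e.\ $\sigma(\mathfrak{v})=\mathfrak{v}$, then $\mathfrak{v}=\sigma(\mathfrak{v})\in\sigma(\mathcal{O})=G\cdot\sigma(\mathfrak{u})$, so the two $G$-orbits $\mathcal{O}$ and $G\cdot\sigma(\mathfrak{u})$ share the point $\mathfrak{v}$ and therefore coincide; in particular $\sigma(\mathfrak{u})\in\mathcal{O}$.

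The only point requiring mild care is that $\sigma$ on $G$ is an anti-homomorphism in the $\mathfrak{su}(3)$ and $\mathfrak{su}(2,1)$ cases while it is a homomorphism in the $\mathfrak{sl}_3(\mathbb{R})$ case, but since the equivariance identity is phrased symmetrically via $\sigma(g)^{-1}$ this distinction does not affect the argument. There is no real obstacle here, consistent with the authors' remark that the statement is straightforward.
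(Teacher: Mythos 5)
Your proof is correct and rests on the same key fact as the paper's: the $\Gamma$-equivariance $\sigma(g\cdot\mathfrak{u})=\sigma(g)\cdot\sigma(\mathfrak{u})$ applied to a real point of the orbit. The paper simply writes the one-line computation $g\cdot\mathfrak{u}=\sigma(g)\cdot\sigma(\mathfrak{u})$, hence $\sigma(\mathfrak{u})=(\sigma(g)^{-1}g)\cdot\mathfrak{u}$, whereas you phrase the same argument at the level of orbits; the two are essentially identical.
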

\begin{proof}
Let $g\cdot \mathfrak{u}$ be a real point. Then, $g \cdot \mathfrak{u} = \sigma(g\cdot \mathfrak{u}) = \sigma(g) \cdot \sigma(\mathfrak{u})$. Hence,
$\sigma(\mathfrak{u}) = (\sigma(g)^{-1} g) \cdot \mathfrak{u}$.
\end{proof}

\begin{theorem}\label{th:1} $\mathrm{[}$\cite{borovoi20}, Proposition 1.4 $\mathrm{]}$
Let  $\mathcal{Z}_G (\mathfrak{u})$ be the stabilizer of  $\mathfrak{u}$ in $G$,  and let $g_0 \in G$  be
such that $\sigma(\mathfrak{u}) = g_0^{-1} \cdot \mathfrak{u}$. Then, 
\begin{equation}
g_1 \cdot \mathfrak{u} ~\text{is real if and only if }~  g_1^{-1} \sigma(g_1)\in \mathcal{Z}_G (\mathfrak{u})g_0. 
\end{equation}
\end{theorem}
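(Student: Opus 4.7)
The plan is to unwind the definition of ``real'' directly. By the notation established just above the theorem, the subalgebra $g_1\cdot \mathfrak{u}$ is real precisely when $\sigma(g_1\cdot \mathfrak{u}) = g_1\cdot \mathfrak{u}$. So the whole proof amounts to transforming this fixed-point condition into membership of $g_1^{-1}\sigma(g_1)$ in the coset $\mathcal{Z}_G(\mathfrak{u})g_0$, using only two ingredients: the $\Gamma$-equivariance of the $G$-action on subalgebras, and the hypothesis $\sigma(\mathfrak{u}) = g_0^{-1}\cdot\mathfrak{u}$.

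Concretely, I would first apply equivariance to rewrite
\[
\sigma(g_1\cdot \mathfrak{u}) \;=\; \sigma(g_1)\cdot \sigma(\mathfrak{u}),
\]
and then substitute the assumption on $g_0$ to obtain
\[
\sigma(g_1\cdot \mathfrak{u}) \;=\; \sigma(g_1)\cdot (g_0^{-1}\cdot \mathfrak{u}) \;=\; \bigl(\sigma(g_1)g_0^{-1}\bigr)\cdot \mathfrak{u}.
\]
Thus $g_1\cdot \mathfrak{u}$ is real if and only if $\bigl(\sigma(g_1)g_0^{-1}\bigr)\cdot \mathfrak{u} = g_1\cdot \mathfrak{u}$, which by the definition of the stabilizer is equivalent to
\[
g_1^{-1}\sigma(g_1)g_0^{-1}\in \mathcal{Z}_G(\mathfrak{u}),
\]
and right-multiplying by $g_0$ gives exactly $g_1^{-1}\sigma(g_1)\in \mathcal{Z}_G(\mathfrak{u})\, g_0$. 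Both implications are obtained at once because every step in the chain is an equivalence.

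There is really no hard step here: the only thing one must be careful about is the direction in which $\sigma$ distributes over the action and the convention $\sigma(\mathfrak{u})=g_0^{-1}\cdot \mathfrak{u}$ (as opposed to $g_0\cdot \mathfrak{u}$), since reversing this convention would place $g_0$ on the opposite side of the coset. Once those bookkeeping conventions are fixed consistently with the previous lemma (which guarantees the existence of such a $g_0$ when $\mathcal O$ contains a real point), the statement follows in a single line of computation; no deeper Galois-cohomological machinery is needed for this particular theorem, although it is of course the gateway to interpreting the resulting cosets as cocycle classes in $H^1\mathcal{Z}_G(\mathfrak{u})$.
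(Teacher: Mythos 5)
Your proof is correct: the chain of equivalences using $\Gamma$-equivariance of the action, the hypothesis $\sigma(\mathfrak{u})=g_0^{-1}\cdot\mathfrak{u}$, and the definition of the stabilizer is exactly the standard argument, and each step is indeed reversible. The paper itself gives no proof (it cites Proposition 1.4 of Borovoi), so there is nothing to compare beyond noting that your one-line verification is the expected one.
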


In all cases that we deal with we manage to find an element $g_0$ as in the theorem by 
computer aided calculation. We do not give the details of these calculations but just the 
element $g_0$. In all cases $g_0$ is a cocycle, that is, we have $g_0 \sigma(g_0)=1$.
Again by explicit calculation we find an element $g_1\in G$ with $g_1^{-1} \sigma(g_1) = g_0$.
(Note that the existence of such an element is only guaranteed if $g_0$ is equivalent to the
trivial cocycle. However, in all cases of interest in this paper we manage to find this $g_1$.)
Then $g_1\cdot \u$ is a real point in the orbit of $\u$. The stabilizer of 
$g_1\cdot \u$ is $g_1\mathcal{Z}_G(\u)g_1^{-1}$. The stabilizer $\mathcal{Z}_G(\u)$ is given
in the tables of the appendix. In view of Theorem \ref{thm:galco}
we then consider the Galois cohomology set $H^1 (g_1\mathcal{Z}_G(\u)g_1^{-1},\sigma)$ in order to determine the real subalgebras that are $\SL_3(\C)$-conjugate,
but not $G^\sigma$-conjugate, to $\u$.

\section{The real subalgebras of $\mathfrak{sl}_3(\mathbb{R})$.}

The real special linear algebra $\mathfrak{sl}_3(\mathbb{R})$ is the Lie algebra of traceless $3 \times 3$ matrices with real entries.  It is eight-dimensional, with basis 

\begin{equation}\label{sl3basiss}
\arraycolsep=1.5pt\def\arraystretch{1.28}
\begin{array}{llllllllllll}
H_\alpha &=& \begin{pmatrix}
1 & 0 & 0\\
0 & -1 & 0\\
0&0&0
\end{pmatrix}, & H_\beta &=& \begin{pmatrix}
0 & 0 & 0\\
0 & 1 & 0\\
0&0&-1
\end{pmatrix}, & \\
X_\alpha &=& \begin{pmatrix}
0& 1 & 0\\
0 & 0 & 0\\
0&0&0
\end{pmatrix}, & X_\beta &=& \begin{pmatrix}
0 & 0 & 0\\
0 & 0 & 1\\
0&0&0
\end{pmatrix}, &
X_{\alpha+\beta} &=& \begin{pmatrix}
0 & 0 & -1\\
0 & 0 & 0\\
0&0&0
\end{pmatrix}, \\
Y_\alpha &=& \begin{pmatrix}
0& 0 & 0\\
1 & 0 & 0\\
0&0&0
\end{pmatrix}, & Y_\beta &=& \begin{pmatrix}
0 & 0 & 0\\
0 & 0 & 0\\
0&1&0
\end{pmatrix}, &
Y_{\alpha+\beta} &=& \begin{pmatrix}
0 & 0 & 0\\
0 & 0 & 0\\
-1&0&0
\end{pmatrix}.
\end{array}
\end{equation}
The real special linear algebra $\mathfrak{sl}_3(\mathbb{R})$ is a non-compact real form of the complex special linear algebra $\mathfrak{sl}_3(\mathbb{C})$.
The Lie group corresponding to $\mathfrak{sl}_3(\mathbb{R})$ is the real special linear group
$\mathrm{SL}_3(\mathbb{R})$. It is the Lie group of $3\times 3$ matrices with real entries and determinant one.

Throughout this section we use the conjugations $\sigma : \SL_3(\C)\to \SL_3(\C)$,
$\sigma : \mathfrak{sl}_3(\C)\to \mathfrak{sl}_3(\C)$ given in both cases by 
$\sigma(u) = \overline{u}$ (complex conjugation of the matrix entries). 

In Theorems \ref{onedsl3t} through \ref{levisl3t}, we classify the real subalgebras of $\mathfrak{sl}_3(\mathbb{R})$, up to conjugation in $\mathrm{SL}_3(\mathbb{R})$.
The classification is presented and summarized in Tables \ref{onedsl3}  through \ref{levisl3}. For the tables in this section,  subalgebras separated by a horizontal dashed line are conjugate in $\mathrm{SL}_3(\mathbb{C})$ (but not under conjugation in $\mathrm{SL}_3(\mathbb{R})$). Further,  $\u \sim \mathfrak{v}$   indicates that subalgebras  $\u$ and  $\mathfrak{v}$ are conjugate with respect to $\mathrm{SL}_3(\mathbb{R})$.
Conjugacy conditions for cases with parameters are established by direct computational analysis.

The tables are organized according to dimension and structure. They are structurally organized into  solvable, semisimple, and Levi decomposable subalgebras.

Note that Tables \ref{onedsl3} through \ref{levisl3} contain both the classification of real subalgebras of $\mathfrak{sl}_3(\mathbb{R})$ from the present article, and also, for comparison, the classification from \cite{win04}. 
There are a few minor oversights in \cite{win04}, which we note in the tables. The tables use the basis of $\mathfrak{sl}_3(\mathbb{R})$ and notation used in \cite{win04}. The basis of $\mathfrak{sl}_3(\mathbb{R})$ used in \cite{win04} is given by
\begin{small}
\begin{equation}\label{winbasis}
\arraycolsep=1.1pt\def\arraystretch{1.4}
\begin{array}{llllllllllll}
K_1 &= \frac{1}{2}&\begin{pmatrix}1 & 0 & 0 \\
0 &-1 & 0 \\
0 & 0 & 0
\end{pmatrix}, & K_2 &= \frac{1}{2}&\begin{pmatrix}
0 & 1 & 0 \\
1 & 0& 0 \\
0 & 0 & 0
\end{pmatrix}, & L_3 &= \frac{1}{2 }&\begin{pmatrix}0 & -1 & 0 \\
1 &0 & 0 \\
0 & 0 & 0
\end{pmatrix}, &
D &=&  \begin{pmatrix}1 & 0 & 0 \\
0 &1 &0 \\
0 & 0 & -2
\end{pmatrix}, \\
P_1 &=& \begin{pmatrix}0 & 0 & 1 \\
0 &0 & 0 \\
0 & 0 & 0
\end{pmatrix}, & 
P_2 &=& \begin{pmatrix}0 & 0 & 0 \\
0 &0 & 1 \\
0 & 0 & 0
\end{pmatrix}, & R_1 &=& \begin{pmatrix}0 & 0 & 0 \\
0 &0 & 0 \\
1 & 0 & 0
\end{pmatrix}, &
R_2 &=& \begin{pmatrix}0 & 0 & 0 \\
0&0 & 0 \\
0 & 1 & 0
\end{pmatrix}.
\end{array}
\end{equation}
\end{small}

\begin{theorem}\label{onedsl3t}
A complete list of inequivalent, one-dimensional real subalgebras of $\mathfrak{sl}_3(\mathbb{R})$, up to conjugation in $\mathrm{SL}_3(\mathbb{R})$, is given in Table
\ref{onedsl3}.
\end{theorem}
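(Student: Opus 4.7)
The plan is to run through the list of one-dimensional subalgebras of $\mathfrak{sl}_3(\C)$ recorded in the appendix (from \cite{dr16a,dr16b,dr18}) and apply the procedure of Section 4 to each representative, using the conjugation $\sigma(u)=\overline{u}$. A one-dimensional subalgebra is determined by a single nonzero $x\in\mathfrak{sl}_3(\C)$ up to nonzero scalar, so the complex $\SL_3(\C)$-orbits correspond to projective Jordan types: regular semisimple (with a continuous eigenvalue parameter), semisimple with a repeated eigenvalue, regular nilpotent (one $3\times 3$ Jordan block), minimal nilpotent (rank-one nilpotent), and mixed semisimple-plus-nilpotent lines.

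For each complex representative $\mathfrak{u}=\C\cdot x$, the first step is to decide whether the $\SL_3(\C)$-orbit of $\mathfrak{u}$ contains a real point. By the Lemma preceding Theorem \ref{th:1}, a necessary condition is $\sigma(\mathfrak{u})\in \SL_3(\C)\cdot\mathfrak{u}$; concretely, the Jordan type of $\overline{x}$ must agree with that of $x$ up to a scalar multiple, which already eliminates, for example, regular semisimple lines whose eigenvalue triple is not closed under complex conjugation. Where the condition holds, I would exhibit an explicit element $g_0\in\SL_3(\C)$ with $\sigma(\mathfrak{u})=g_0^{-1}\cdot\mathfrak{u}$, verify that $g_0\sigma(g_0)=1$, and solve $g_1^{-1}\sigma(g_1)=g_0$ by direct calculation as described in Section 4; the subalgebra $g_1\cdot\mathfrak{u}$ is then a real representative of the orbit.

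Given a real representative, Theorem \ref{thm:galco} reduces the enumeration of $\SL_3(\R)$-orbits inside the real locus of the $\SL_3(\C)$-orbit to the computation of $\ker i_*$ inside $H^1(g_1\mathcal{Z}_G(\mathfrak{u})g_1^{-1},\sigma)$. The stabilizers $\mathcal{Z}_G(\mathfrak{u})$ for one-dimensional $\mathfrak{u}$ are listed in the appendix and, depending on the Jordan type of $x$, are essentially (extensions of) maximal tori in the semisimple case or semidirect products of a one-parameter torus with a unipotent centralizer in the nilpotent case. I would compute $H^1$ in each case as follows: Sansuc's lemma (Lemma \ref{lem:sansuc}) kills any unipotent radical; Lemmas \ref{lem:T1} and \ref{lem:T2} handle the surviving tori; and the procedure outlined at the end of Section 3, together with Proposition \ref{prop:compodd}, handles any finite component group that remains.

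The main obstacle I expect is the semisimple parametric family. There one must (i) pin down exactly which eigenvalue patterns, all real or one real with a complex-conjugate pair, yield a real orbit; (ii) quotient the remaining parameter by the $S_3$-action permuting the three diagonal entries, which becomes relevant when comparing representatives and when matching against Winternitz's list; and (iii) distinguish between the split and compact real forms of the Cartan torus, which give inequivalent $\SL_3(\R)$-orbits lying over the same complex orbit and whose distinction is exactly what $\ker i_*$ is detecting. The remaining labour is the bookkeeping of translating the resulting list into the Winternitz basis \eqref{winbasis} and cross-checking against \cite{win04}, which is how the minor inaccuracies advertised in the introduction will surface.
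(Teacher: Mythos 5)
Your proposal follows essentially the same route as the paper's proof: run through the representatives in Table \ref{table1sl3C}, test for real points via the condition $\sigma(\mathfrak{u})\in\SL_3(\C)\cdot\mathfrak{u}$ (which for the Cartan line reduces to $\bar a$ lying in the six-element orbit of $a$), produce explicit $g_0$ and $g_1$, and then compute $H^1$ of the stabilizer using Sansuc's lemma, the torus lemmas, and Proposition \ref{prop:compodd} — including the correct identification that the extra real orbit over $\langle H_\alpha+H_\beta\rangle$ comes from the nontrivial class in the order-two component group and corresponds to the compact versus split one-dimensional torus. The only remaining work is the explicit matrix computations and the verification that the three families arising from the three non-real eigenvalue patterns are mutually $\SL_3(\R)$-conjugate, which you flag under the $S_3$-action and which the paper settles by exhibiting real permutation matrices.
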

\begin{proof}
We first consider all  one-dimensional  subalgebras of $\mathfrak{sl}_3(\mathbb{C})$ from Table \ref{table1sl3C}  that are
presented with a real basis; namely, all subalgebras except $\u^a=\langle H_\alpha + a H_\beta \rangle$ for $a$ not real. 
For all such subalgebras $\u$ presented with a real basis, except $\langle H_\alpha + a H_\beta \rangle$ with $a=0$ or $a=1$ Table \ref{tab:stab1} shows that the
stabilizer is connected, and of the form $T$, $T\ltimes U$ (where $T$ is a subtorus of the
diagonal torus of $\SL_3(\C)$ and $U$ is a normal unipotent subgroup) or isomorphic to 
$\GL_2(\C)$. In the first two cases we have that $H^1 \mathcal{Z}_G(\u)$ is trivial by 
Lemmas \ref{lem:T1}(1), \ref{lem:sansuc}. For the third case we remark that the Galois cohomology
of $\GL_n(\C)$ is known to be trivial (\cite[III.i, Lemma 1]{serre}). 
It follows that the complex orbit of each of these subalgebras necessarily contains exactly one
real orbit. Hence, each 
is listed as a single representative in Table \ref{onedsl3}, just as it was for the complex case in Table \ref{table1sl3C}.  We now consider $\langle H_\alpha + a H_\beta \rangle$ with $a=0$ or $a=1$.

\vspace{3mm}

\noindent $\underline{ \langle H_\alpha + a H_\beta \rangle, a=0, a=1}$:
Note that $\langle H_\alpha +  H_\beta \rangle$ $\sim$ $\langle H_\alpha  \rangle$. Hence, we consider only $\langle H_\alpha +  H_\beta \rangle$. Let 
\begin{equation}\label{firstc}c=\begin{pmatrix}
    0&0&1\\0&-1&0\\1&0&0
\end{pmatrix}.\end{equation}
Then from Table \ref{tab:stab1} we see that the stabilizer of the subalgebra is $T\times \langle
1,c\rangle$, where $T$ is the diagonal torus of $\SL_3(\C)$. The Galois cohomology of
$T$ is trivial and the Galois cohomology of the second factor is $\{[1],[c]\}$. It follows 
that the first Galois cohomology set of the stabilizer has two elements, with nontrivial cocycle represented by $c$. 
We have  $g^{-1} \bar{g} =c$, where 
\begin{equation}\label{firstg}g = \begin{pmatrix}
    1&0&1\\0&\tfrac{1}{2}i&0\\i&0&-i
\end{pmatrix}.\end{equation}
Hence, the subalgebra  spanned by 
\begin{equation}g (H_\alpha+H_\beta) g^{-1}=\begin{pmatrix}
    0&0&-i\\0&0&0\\i&0&0
\end{pmatrix}\end{equation}
 is a second orbit representative. A real basis consists of $i$ times the previous matrix, namely $-X_{\alpha+\beta}+Y_{\alpha+\beta}$.
 We denote this subalgebra by \begin{equation}\u_{1,6}=\langle -X_{\alpha+\beta}+Y_{\alpha+\beta} \rangle\end{equation} in Table \ref{onedsl3}, and we define \begin{equation}\u_{1,5}= \langle H_\alpha+H_\beta \rangle.\end{equation} These two subalgebras are 
 conjugate under $\mathrm{SL}_3(\mathbb{C})$, but not under $\mathrm{SL}_3(\mathbb{R})$.

We now must consider the case where the subalgebra is presented with a non-real basis, namely $\u^a=\langle H_\alpha + a H_\beta \rangle$ for $a\notin \mathbb{R}$.  For this case, we must  first find real points, when they exist.
 
\vspace{3mm}

\noindent  $\underline{ \u^a=\langle H_\alpha + a H_\beta \rangle, a \notin \mathbb{R}}$:
As described in Table \ref{table1sl3C},  $\u^a$ $\sim$ $\u^{b}$ if and only if $a=b, 1-b, \frac{1}{b}, \frac{1}{1-b}, \frac{b}{b-1}$, or $\frac{b-1}{b}$.
Since  $\sigma(\u^a) = \u^{\bar{a}}$, if the orbit of $\u^a$ has a real subalgebra then $\bar{a}=a$, $\bar{a}=1-a$, $\bar{a}=\frac{1}{a}$, 
$\bar{a}=\frac{1}{1-a}$, $\bar{a}=\frac{a}{a-1}$, or $\bar{a}=\frac{a-1}{a}$.
If $\bar{a}=a$, then $a$ itself is real, which has already been considered.  The two cases
 $\bar a =\tfrac{1}{1-a}$ or $\bar a = \tfrac{a-1}{a}$ yield no $a\in\C$, $a\not\in\R$.  
 We consider the remaining cases. 

The condition $\bar a = \tfrac{1}{a}$ is equivalent to $|a|=1$. Set
\begin{equation}g_0 = \begin{pmatrix} 0&0&1\\0&-1&0\\1&0&0\end{pmatrix} \text{ and }
  h=\begin{pmatrix} -\tfrac{1}{2}(1+i) & 0 & \tfrac{1}{2}(-1+i)\\
0&-i&0\\ \tfrac{1}{2}(-1+i) & 0 & -\tfrac{1}{2}(1+i) \end{pmatrix}.
  \end{equation}
Then $g_0^{-1}\cdot (H_\alpha+aH_\beta) = -a(H_\alpha+\bar a H_\beta)$ and $h^{-1}\bar h = g_0$.
The subalgebra $h\cdot \u^a$ is spanned by
\begin{equation}\begin{pmatrix} -\tfrac{1}{2} & 0 & u \\ 0 & 1 & 0 \\-u & 0 & -\tfrac{1}{2}
  \end{pmatrix},\end{equation}
where $u\in \R$, $u\neq 0$. The stabilizer of this algebra is obtained by
conjugating the stabilizer of $\u^a$ by $h$. If $a \neq \tfrac{1\pm
  \sqrt{-3}}{2}$ then it consists of
\begin{equation}T(a,c) = \begin{pmatrix} \tfrac{a+c}{2} & 0 & i\tfrac{a-c}{2} \\
  0 & (ac)^{-1} & 0 \\ i\tfrac{-a+c}{2} & 0 & \tfrac{a+c}{2}\end{pmatrix}.\end{equation}
We have $\overline{T(a,c)} = T(\bar c, \bar a)$. By Lemma \ref{lem:T2} the first
Galois cohomology of the stabilizer is trivial. Hence we get one series of
algebras spanned by the above matrix. If $a=\tfrac{1\pm \sqrt{-3}}{2}$ then
the above group is the identity component and the component group has order 3.
So by Proposition \ref{prop:compodd} the Galois cohomology is also trivial in this case and we
get an instance of the same series of algebras.

Now suppose that $\bar a = 1-a$. This is the same as $a=\tfrac{1}{2}+iy$ with
$y\in \R$. Set
\begin{equation}g_0 = \begin{pmatrix} -1&0&0\\ 0&0&1\\ 0&1&0\end{pmatrix}, \text{ and }
  h=\begin{pmatrix} -i&0&0\\0&  -\tfrac{1}{2}(1+i) & \tfrac{1}{2}(-1+i)\\
  0&\tfrac{1}{2}(-1+i)&-\tfrac{1}{2}(1+i)\end{pmatrix}.\end{equation}
Then $g_0^{-1}\cdot (H_\alpha+aH_\beta) = H_\alpha + \bar a H_\beta$ and $h^{-1}
\bar{h} = g_0$. Furthermore $h\cdot \u^a$ is spanned by 
\begin{equation}\begin{pmatrix} 1&0&0\\ 0&-\tfrac{1}{2}&-y\\ 0&y&-\tfrac{1}{2}
\end{pmatrix}.\end{equation}
We obtain the stabilizer of this algebra as in the previous case and again the
Galois cohomology is trivial. So we get one series of algebras spanned by the
above matrix.

In the last case we have $\bar a = \tfrac{a}{a-1}$ which is equivalent to
$a=x+iy$ with $x^2-2x+y^2=0$. Here we set
\begin{equation}g_0 = \begin{pmatrix} 0&1&0\\1&0&0\\0&0&-1\end{pmatrix}, \text{ and }
  h=\begin{pmatrix} -\tfrac{1}{2}(1+i) & \tfrac{1}{2}(-1+i) & 0 \\
  \tfrac{1}{2}(-1+i) & -\tfrac{1}{2}(1+i) & 0\\0&0&-i \end{pmatrix}.\end{equation}
We have that $g_0^{-1}\cdot \u^a = \u^{\bar a}$ and $h^{-1} \bar{h} = g_0$. The
algebra $h\cdot \u^a$ is spanned by
\begin{equation}\begin{pmatrix} \tfrac{1}{2} & u & 0\\ -u & \tfrac{1}{2} & 0 \\
  0&0&-1\end{pmatrix}.\end{equation}
The Galois cohomology of the stabilizer is trivial, as in the previous two
cases. Hence we again get one series of algebras.

In summary, there are three families of subalgebras in this case:
\begin{equation}\label{threefam}
\begin{array}{lll}
\s_1^u= \big\langle -\frac{1}{2}H_\alpha+\frac{1}{2}H_\beta-u X_{\alpha+\beta}+uY_{\alpha+\beta} \big\rangle, \\[2ex]
\s_2^u=\big\langle H_\alpha+\frac{1}{2}H_\beta-u X_\beta+u Y_\beta\big\rangle,  \\[2ex]
\s^u_3= \big\langle \frac{1}{2}H_\alpha+H_\beta+u X_\alpha-u Y_\alpha\big\rangle, 
\end{array}
\end{equation}
where $u \in \mathbb{R}\setminus \{0\}.$
However, the families are pairwise equivalent. Specifically, 
$G \s_2^u G^{-1} = \s_1^u$, where 
\begin{equation}
    G=\begin{pmatrix}0 & 0 & 1 \\
1&0 & 0 \\
0 & 1 & 0
\end{pmatrix}\in \mathrm{SL}_3(\mathbb{R}).
\end{equation}
And, $K \s_2^u K^{-1} = \s_3^u$, where
\begin{equation}
    K=\begin{pmatrix}0 & 1 & 0 \\
0&0 & 1 \\
1 & 0 & 0
\end{pmatrix}\in \mathrm{SL}_3(\mathbb{R}).
\end{equation}

Hence, we get just one new family in this case, up to equivalence. We choose the second family in Eq. \eqref{threefam} as our representative and define \begin{equation}\u_{1,7}^y= \bigg\langle H_\alpha+\frac{1}{2}H_\beta -yX_\beta+yY_\beta\bigg\rangle, ~ y\in \mathbb{R}\setminus \{0\}.\end{equation}
\end{proof}

\begin{theorem}\label{twodsl3t}
A complete list of inequivalent,  two-dimensional real subalgebras of $\mathfrak{sl}_3(\mathbb{R})$, up to conjugation in $\mathrm{SL}_3(\mathbb{R})$, is given
in Table \ref{twodsl3}.
\end{theorem}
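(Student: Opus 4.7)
The plan is to apply the same cohomological machinery used in Theorem \ref{onedsl3t}, run systematically over the two-dimensional $\mathfrak{sl}_3(\C)$-subalgebras listed in the appendix. For each complex orbit representative $\u$ with stabilizer $\mathcal{Z}_G(\u)$ read off from the appendix tables, I would first check whether $\u$ is already presented with a real basis; if so, $\u$ itself is a real point of its $G$-orbit, and the task reduces to computing $H^1(\mathcal{Z}_G(\u),\sigma)$ to count the number of $\mathrm{SL}_3(\R)$-orbits inside that $G$-orbit. For subalgebras whose stabilizer is connected and is a torus, a semidirect product of a torus with a unipotent group, or isomorphic to $\GL_n(\C)$, Lemmas \ref{lem:T1}, \ref{lem:T2}, Lemma \ref{lem:sansuc} and the triviality of $H^1\GL_n(\C)$ immediately force a single real orbit, matching the complex representative.

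For subalgebras whose stabilizer is disconnected, I would apply Proposition \ref{prop:compodd} whenever the component group has odd prime-power order; otherwise I would follow the procedure outlined at the end of Section~3 (working via the projection $\pi : Z\to Z/Z^\circ$ and computing $\pi_*^{-1}([c])$ by the twisted-action argument). In each case where a nontrivial cocycle $c$ in the stabilizer is found, I would exhibit (by the same kind of explicit computer-aided search as in the proof of Theorem~\ref{onedsl3t}) an element $g\in G$ with $g^{-1}\overline{g}=c$, take the conjugate $g\cdot \u$, and then extract a real basis by multiplying by $i$ when necessary. This will produce the additional real orbits sitting inside the same complex orbit, to be recorded with a dashed line in Table \ref{twodsl3}.

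For subalgebras presented with a complex parameter $a\notin\R$, the orbit of $\u^a$ contains a real point only if $\sigma(\u^a)=\u^{\bar a}$ lies in the same $G$-orbit; this gives a finite list of algebraic conditions on $a$, exactly analogous to $\bar a = a,\, 1-a,\, 1/a,\, 1/(1-a),\, a/(a-1),\, (a-1)/a$ in the one-dimensional case, each defining a real one-parameter subset of $\C$. For each surviving condition, I would explicitly construct $g_0$ with $g_0^{-1}\cdot \u^a = \u^{\bar a}$, verify that $g_0$ is a cocycle, find $h$ with $h^{-1}\overline{h}=g_0$, and then compute the stabilizer of $h\cdot \u^a$ and its Galois cohomology (again almost always a two-dimensional torus with an involution of one of the shapes in Lemma~\ref{lem:T2}). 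After gathering the resulting one-parameter families, I would cross-check pairwise equivalences under $\mathrm{SL}_3(\R)$ by exhibiting explicit conjugating matrices (as with the matrices $G$ and $K$ in Theorem~\ref{onedsl3t}) and discard duplicates.

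The main obstacle, as in Theorem~\ref{onedsl3t}, is the bookkeeping for the parameter families: determining the full list of $a\in\C\setminus\R$ satisfying the required relations, choosing canonical $g_0$ and $h$, ensuring the resulting real bases take a uniform shape, and then recognising that several ostensibly distinct families are in fact $\mathrm{SL}_3(\R)$-conjugate. A secondary subtlety is the handling of special parameter values at which the complex stabilizer jumps to a disconnected group (the analogue of $a=\tfrac{1\pm\sqrt{-3}}{2}$ in the one-dimensional case); for these, Proposition~\ref{prop:compodd} or the end-of-Section~3 procedure must be invoked to confirm that the Galois cohomology remains trivial and no extra real orbits appear. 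Once these pieces are assembled, the results are read off into Table~\ref{twodsl3} in the same format as Table~\ref{onedsl3}, with the entries from \cite{win04} recorded alongside for comparison and any discrepancies flagged.
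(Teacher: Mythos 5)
Your plan follows the paper's proof essentially verbatim: read off stabilizers from the appendix, get triviality of $H^1$ for the connected torus/unipotent/$\GL_n(\C)$ cases via Lemmas \ref{lem:T1}, \ref{lem:sansuc} and Proposition \ref{prop:compodd}, treat the one disconnected stabilizer ($\langle H_\alpha,H_\beta\rangle$, with component group $S_3$) to get the extra real orbit $\u_{2,6}$, and screen the parameter family via $\sigma(\u^a)=\u^{\bar a}$. The only simplification you would discover on execution is that, since $\u^a\sim\u^b$ iff $a=b$ here, the reality condition collapses to $a=\bar a$ and the entire non-real-parameter machinery of your third paragraph is vacuous; otherwise the approaches coincide.
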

\begin{proof}
In the list of two-dimensional subalgebras of $\mathfrak{sl}_3(\mathbb{C})$ in Table \ref{table2sl3C}, only  the last family of subalgebras, namely $\u^a=\langle X_\alpha, aH_\alpha+(2a+1)H_\beta \rangle$,  is presented with a non-real basis, which occurs for $a\notin \mathbb{R}$.  It is only for this case that we must first find real points, when they exist.

However, from Table \ref{table2sl3C},  $\u^a$ $\sim$ $\u^b$ if and only if $a=b$.
Since $\sigma(\u^a) = \u^{\bar{a}}$, if $\u^a$ has a real subalgebra then $a=\bar{a}$. That is, if the orbit of $\u^a$ (with respect to $\mathrm{SL}_3(\mathbb{C})$) has a real subalgebra, then $a$ is real. Hence, we only need to consider $\u^a$ for $a$ real. 

From Table \ref{tab:stab2} we see that the stabilizer of all subalgebras, except $\langle H_\alpha,H_\beta\rangle$, is of the form $T\ltimes U$ with a component group of order 1 or 3
(where $T$ is a subtorus of the diagonal torus of $\SL_3(\C)$ and $U$ is unipotent) or isomorphic to $\GL(2,\C)$. In all cases we get that the Galois cohomology is trivial by using a combination of Lemmas \ref{lem:T1}, \ref{lem:sansuc} and
Proposition \ref{prop:compodd}, and the fact that the Galois cohomology of
$\GL(n,\C)$ is trivial. 
Hence, in all cases except $\langle H_\alpha,H_\beta\rangle$, each complex orbit  has exactly one real orbit.  Each 
is listed as a single row in Table \ref{twodsl3}, just as it was for the complex case in Table \ref{table2sl3C}.

\vspace{3mm}

\noindent $\u=\underline{\langle H_\alpha,H_\beta\rangle}$: From Table
\ref{tab:stab2} we deduce that $\mathcal{Z}_G(\u) = T\times F$, where
$T$ is the diagonal torus of $\SL_3(\C)$ and $F$ is a group of order 6,
isomorphic to the symmetric group on 3 points. The elements of $F$ can be chosen
to be all real. So the classes in $H^1 F$ coincide with the conjugacy classes in $F$ of
elements of order dividing 2. We obtain that $H^1 F = \{ [1], [c]\}$ with $c$ from \eqref{firstc}.
The Galois cohomology of $T$ is trivial, so $H^1 \mathcal{Z}_G(\u) =
\{[1],[c]\}$. Let $g$ be as in \eqref{firstc}, then $g^{-1}\bar g = c$.
Hence, the subalgebra  $g \langle H_\alpha,H_\beta\rangle g^{-1}$
is the representative of a second orbit; it is spanned by 
\begin{equation}-X_{\alpha+\beta}+Y_{\alpha+\beta}=\begin{pmatrix} 0&0&1\\0&0&0\\-1&0&0 \end{pmatrix} ~\text{and}~ H_\alpha-H_\beta=\begin{pmatrix} 1&0&0\\0&-2&0\\0&0&1\end{pmatrix}.\end{equation}
We denote \begin{equation}\u_{2,5}=\langle H_\alpha, H_\beta \rangle~\text{and}~ \u_{2,6}=\langle -X_{\alpha+\beta}+Y_{\alpha+\beta}, H_\alpha-H_\beta\rangle.\end{equation}
\end{proof}

\begin{theorem}\label{semisimplesl3t}
A complete list of inequivalent, semisimple  real subalgebras of  $\mathfrak{sl}_3(\mathbb{R})$, up to conjugation in $\mathrm{SL}_3(\mathbb{R})$, is given in Table 
\ref{semisimplesl3}.
\end{theorem}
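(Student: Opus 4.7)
The plan is to follow the same template used in Theorems~\ref{onedsl3t} and~\ref{twodsl3t}. For each semisimple subalgebra representative $\mathfrak{u}$ in the complex classification in the appendix, I verify that its $\SL_3(\C)$-orbit contains a real point, read the stabilizer $\mathcal{Z}_G(\mathfrak{u})$ from the appendix table, and compute the first Galois cohomology set $H^1(\mathcal{Z}_G(\mathfrak{u}),\sigma)$ with $\sigma$ entry-wise complex conjugation. By Theorem~\ref{thm:galco}, together with the triviality of $H^1(\SL_3(\C),\sigma)$ (Hilbert 90), the $\SL_3(\R)$-orbits inside the complex orbit are in bijection with $H^1(\mathcal{Z}_G(\mathfrak{u}),\sigma)$. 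For each nontrivial class I exhibit a cocycle $c$ and solve $g_1^{-1}\overline{g_1}=c$ explicitly to obtain a new real orbit representative $g_1\cdot \mathfrak{u}$ via Theorem~\ref{th:1}.

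The semisimple subalgebras of $\mathfrak{sl}_3(\C)$ from the appendix are the full algebra itself, the $\mathfrak{sl}_2$ embedded as a $2\times 2$ block, and the principal $\mathfrak{sl}_2$ arising from the irreducible three-dimensional representation. All three admit real bases, so $\sigma(\mathfrak{u})=\mathfrak{u}$ and I may take $g_0=1$ in Theorem~\ref{th:1}. For $\mathfrak{u}=\mathfrak{sl}_3(\C)$ the stabilizer is all of $\SL_3(\C)$, whose Galois cohomology is trivial, so the only real orbit produced is $\mathfrak{sl}_3(\R)$ itself. For the standard $\mathfrak{sl}_2$ block, the appendix shows the stabilizer is an extension of a group isomorphic to $\GL_2(\C)$ by a unipotent normal subgroup; by Lemma~\ref{lem:sansuc} together with the triviality of $H^1 \GL_n(\C)$, the cohomology is trivial, and I obtain exactly one real orbit, namely the standard $\mathfrak{sl}_2(\R)$ block.

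The substantive case is the principal $\mathfrak{sl}_2$. Its stabilizer is non-connected: the identity component is (a form of) $\mathrm{PGL}_2(\C)$, and the component group is cyclic of order three, arising from the center of $\SL_3(\C)$. Since $H^1(\mathrm{PGL}_2(\C),\sigma)$ has cardinality two, classifying the two real forms $\mathrm{PGL}_2(\R)$ and $\mathrm{PU}(2)$, Proposition~\ref{prop:compodd} applies with $p=3$ to yield $|H^1 \mathcal{Z}_G(\mathfrak{u})|=2$. The trivial class yields the real principal $\mathfrak{sl}_2(\R)$, spanned by the usual real principal triple in $\mathfrak{sl}_3(\R)$. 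For the nontrivial class I produce an explicit cocycle $c$ together with a $g_1\in \SL_3(\C)$ satisfying $g_1^{-1}\overline{g_1}=c$, and verify that $g_1\cdot \mathfrak{u}$ is $\SL_3(\R)$-conjugate to $\mathfrak{so}(3)\cong \mathfrak{su}(2)$, realized inside $\mathfrak{sl}_3(\R)$ as the space of real skew-symmetric traceless matrices.

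The main obstacle is identifying the identity component of the principal $\mathfrak{sl}_2$ stabilizer as a form of $\mathrm{PGL}_2(\C)$ and verifying the hypothesis $|H^1(\mathrm{PGL}_2(\C),\sigma)|<3$ required to apply Proposition~\ref{prop:compodd}, together with the explicit matrix computation producing $g_1$ that exhibits the $\mathfrak{so}(3)$ representative. All remaining cases reduce to routine applications of Sansuc's lemma and the triviality of $H^1 \GL_n(\C)$.
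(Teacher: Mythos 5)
Your proposal is correct and follows essentially the same route as the paper: all semisimple representatives already have real bases, the block $\mathfrak{sl}_2$ has stabilizer isomorphic to $\GL_2(\C)$ with trivial cohomology, and the principal $\mathfrak{sl}_2$ has stabilizer $\mathrm{PSL}_2(\C)\times\mu_3$ whose $H^1$ has two classes by Proposition~\ref{prop:compodd}, the nontrivial one producing the compact form $\mathfrak{so}(3)$ (the paper's $\u_{3,18}$, conjugate to the skew-symmetric matrices). The only cosmetic deviations are that the paper's Table~\ref{table5sl3C} does not list the full algebra as a representative, and the block-$\mathfrak{sl}_2$ stabilizer is $\GL_2(\C)$ on the nose rather than a unipotent extension thereof; neither affects the argument.
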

\begin{proof}
Each semisimple subalgebra of $\mathfrak{sl}_3(\mathbb{C})$ in  Table \ref{table5sl3C}  is presented with a real basis. Hence, the first step of finding real points is not needed.

From Table \ref{tab:stab4} we see that the stabilizer of $\langle X_{\alpha+\beta}, Y_{\alpha+\beta}, H_\alpha+H_\beta \rangle$ is isomorphic to $\GL(2,\C)$; hence its Galois
cohomology is trivial. 

Let $\u=\langle X_\alpha+X_\beta, 2Y_\alpha+2Y_\beta, 2H_\alpha+2H_\beta\rangle$. 
Table \ref{tab:stab4} shows that the stabilizer $\mathcal{Z}_G (\mathfrak{u})$ 
is isomorphic to $\mathrm{PSL}(2,\mathbb{C})\times \mu_3$ where $\mu_3$ is the group of order 3, consisting of diagonal matrices $\mathrm{diag}(\omega,\omega,\omega)$ with $\omega^3=1$.
It is known that the first Galois cohomology set of $\mathrm{PSL}(2,\mathbb{C})$ has two elements. By Theorem \ref{prop:compodd} this is also 
the first Galois cohomology set of $\mathcal{Z}_G(\u)$. The nontrivial class is represented by 
the same element $c$ from Eq. \eqref{firstc}. Taking $g \in \mathrm{SL}_3(\mathbb{C})$  from Eq. \eqref{firstg} such that $g^{-1} \bar{g} =c$.
we get a 
representative of the second orbit spanned by 
\begin{equation}
\arraycolsep=1.5pt\def\arraystretch{1.28}
\begin{array}{llllllllllll}
-X_{\alpha+\beta}+Y_{\alpha+\beta}&=&\begin{pmatrix} 0&0&1\\0&0&0\\-1&0&0\end{pmatrix}, & 2X_\alpha-\frac{1}{4}Y_\alpha&=&\begin{pmatrix}
    0&2&0\\-\tfrac{1}{4}&0&0\\0&0&0
\end{pmatrix}, ~\text{and}\\
-\frac{1}{4}X_\beta+2Y_\beta&=&\begin{pmatrix}
    0&0&0\\0&0&-\tfrac{1}{4}\\0&2&0
\end{pmatrix}.
\end{array}
\end{equation}
We define \begin{equation}\u_{3,18}=\bigg\langle -X_{\alpha+\beta}+Y_{\alpha+\beta}, 2X_\alpha-\frac{1}{4}Y_\alpha, -\frac{1}{4}X_\beta+2Y_\beta\bigg\rangle.\end{equation}
\end{proof}

\begin{theorem}\label{threedsl3solvt}
A complete list of inequivalent, three-dimensional solvable  real subalgebras of  $\mathfrak{sl}_3(\mathbb{R})$, up to conjugation in $\mathrm{SL}_3(\mathbb{R})$, is given in Table 
\ref{threedsl3solv}.
\end{theorem}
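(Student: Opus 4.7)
The plan is to follow the same pipeline used in the proofs of Theorems \ref{onedsl3t} through \ref{semisimplesl3t}, applied to each three-dimensional solvable complex subalgebra in the appendix table (cf.\ the classification from \cite{dr16a, dr16b, dr18}). For each representative $\u$ in that list, I first check whether it is presented with a real basis. If so, $\sigma(\u)=\u$ automatically and the orbit already contains a real point; otherwise the representative depends on parameters (a complex number $a$, or similar), and I must first determine those parameter values for which the complex orbit contains a real point by using the lemma that $\sigma(\u)$ must lie in the $G$-orbit of $\u$, combined with the equivalence relation on the parameters given in the appendix table. This produces, for each parametric family, a finite collection of possibilities (real $a$; $\bar a = 1/a$; $\bar a = 1-a$; $\bar a = a/(a-1)$; etc.), exactly as in the one-dimensional case.

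Next, for each candidate representative $\u$, I read the stabilizer $\mathcal{Z}_G(\u)$ from Table \ref{tab:stab3} in the appendix and compute $H^1(\mathcal{Z}_G(\u),\sigma)$. In the overwhelming majority of cases the stabilizer will be a semidirect product $T\ltimes U$ with $T$ a subtorus of the diagonal torus and $U$ unipotent, or a group with $\GL_n(\C)$-type factors; by Sansuc's lemma (Lemma \ref{lem:sansuc}), Lemmas \ref{lem:T1}, \ref{lem:T2}, Proposition \ref{prop:compodd}, and the triviality of $H^1\GL_n(\C)$, the cohomology is trivial, so each such complex orbit contributes exactly one real orbit, listed as a single row in Table \ref{threedsl3solv}. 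For parametric families I also verify, by direct matrix computation, whether distinct real parameter values yielded by the analysis above are conjugate under $\mathrm{SL}_3(\R)$ or only under $\mathrm{SL}_3(\C)$, and accordingly record equivalences with the symbol $\sim$ or separate them by a horizontal dashed line.

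For the remaining cases --- those whose stabilizer has a disconnected component group of even order or a $\mathrm{PSL}$-type factor --- the first Galois cohomology set is nontrivial. For each such $\u$ I will identify explicit cocycle representatives $c$, typically variants of the element in Eq.\ \eqref{firstc}, and produce a matrix $g\in\SL_3(\C)$ with $g^{-1}\bar g = c$ (cf.\ Eq.\ \eqref{firstg}); then $g\cdot\u$ gives a second real orbit, for which I write down a real basis obtained by conjugating the original generators by $g$ and, if necessary, multiplying by $i$ to clear imaginary entries. For parametric families where $\bar a$ equals one of the non-identity equivalents of $a$, I will also carry out the change of variables $h\cdot\u^a$ used in the proof of Theorem \ref{onedsl3t} to obtain explicit real representatives, and then check whether distinct families obtained from different branches are in fact $\mathrm{SL}_3(\R)$-conjugate via a fixed element of $\mathrm{SL}_3(\R)$ (as happened for $\s_1^u,\s_2^u,\s_3^u$).

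The main obstacle I anticipate is the bookkeeping: the three-dimensional solvable subalgebras of $\sll_3(\C)$ form a longer and more heterogeneous list than the one- and two-dimensional cases, with several one-parameter families of non-isomorphic abelian, nilpotent, and non-nilpotent solvable algebras. For each family I have to (i) correctly identify which parameter values admit a real point, (ii) reconcile the $\mathrm{SL}_3(\C)$-equivalence relation on parameters with the $\mathrm{SL}_3(\R)$-equivalence on real parameters, and (iii) when $H^1\mathcal{Z}_G(\u)$ is nontrivial, explicitly realize the cocycle by a concrete $g\in\SL_3(\C)$, a step that is in principle routine but computationally delicate. Once these cases are dispatched, the remaining subalgebras fall under the triviality arguments above, and the classification in Table \ref{threedsl3solv} follows.
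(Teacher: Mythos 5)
Your plan is correct and follows essentially the same route as the paper's proof: dispatch the real-basis representatives whose stabilizers have trivial $H^1$ (via Lemmas \ref{lem:T1}, \ref{lem:sansuc}, Proposition \ref{prop:compodd} and triviality for $\GL_n(\C)$), handle the two real-basis cases with an order-two component group by exhibiting explicit cocycles $c$ and elements $g$ with $g^{-1}\bar g=c$ to get the second real orbits $\u_{3,12}$ and $\u_{3,14}$, and treat the two parametric families with $a\notin\R$ by solving $\bar a\in\{a,a^{-1}\}$, producing a real point, and checking triviality of the twisted cohomology of the stabilizer. The only inaccuracy is your mention of a possible $\mathrm{PSL}$-type factor, which does not occur among the three-dimensional solvable stabilizers, but this does not affect the argument.
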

\begin{proof}
We first consider the three-dimensional solvable subalgebras of $\mathfrak{sl}_3(\mathbb{C})$ in Table \ref{table3sl3C} that are presented with a real basis, namely, each case except
$\langle X_\alpha, X_{\alpha+\beta}, (a-1) H_\alpha+aH_\beta \rangle$ and $\langle X_\alpha, Y_{\beta}, H_\alpha+aH_\beta \rangle$
when $a$ is not real.

Let $\u$ be such a subalgebra. Then, if $\u$ is not one of  $\langle X_\alpha, Y_\beta, H_\alpha+H_\beta \rangle$,
$\langle X_\alpha, X_{\alpha+\beta}, H_\beta \rangle$ then by considering the stabilizer from Table \ref{tab:stab3} we get 
that $H^1 \mathcal{Z}_G(\u)$ is trivial by a combination of Lemmas \ref{lem:T1}, \ref{lem:sansuc} and
Proposition \ref{prop:compodd}, and the fact that the Galois cohomology of
$\GL(n,\C)$ is trivial. So these subalgebras yield a single real subalgebra, which is listed as a 
single representative in Table \ref{threedsl3solv}, just as it was for the complex case in Table \ref{table3sl3C}.

\noindent  $\underline{\langle X_\alpha, Y_\beta, H_\alpha+H_\beta \rangle}$: In this case Table \ref{tab:stab3} shows that $\mathcal{Z}_G(\u) = 
\mathcal{Z}_G(\u)^\circ \times F$ where $F=\{1,c\}$ with $c$ as in \eqref{firstc}. The first Galois cohomology set of $\mathcal{Z}_G(\u)^\circ$ is trivial,
so we see that $H^1 \mathcal{Z}_G(\u)=\{[1],[c]\}$. Let $g \in \mathrm{SL}_3(\mathbb{C})$ be the element from Eq. \eqref{firstg} such that $g^{-1} \bar{g} =c$.
Hence, the subalgebra  $g \cdot \u$ represents a second orbit. We have that $g\cdot \u$ is spanned by $X_\alpha, Y_\beta, -X_{\alpha+\beta}+Y_{\alpha+\beta}$. 
We denote \begin{equation} \u_{3,13}=\langle X_\alpha, Y_\beta, H_\alpha+H_\beta \rangle~ \text{and}~ 
\u_{3,14}=\langle X_\alpha, Y_\beta, -X_{\alpha+\beta}+Y_{\alpha+\beta} \rangle,\end{equation} which are
 conjugate under $\mathrm{SL}_3(\mathbb{C})$, but not under $\mathrm{SL}_3(\mathbb{R})$.

\noindent  $\underline{\langle X_\alpha, X_{\alpha+\beta}, H_\beta \rangle}$: In the same way as in the previous case we see that the Galois cohomology set again has two
elements, with nontrivial cocycle representative $c$ and element $g \in \mathrm{SL}_3(\mathbb{C})$ given by 
\begin{equation}\label{secondcg} c = \begin{pmatrix} -1&0&0\\0&0&1\\0&1&0\end{pmatrix} ~\text{and}~  g = \begin{pmatrix} \tfrac{1}{2}i&0&0\\
0&1&1\\0&i&-i\end{pmatrix},\end{equation}
we have that $g^{-1} \bar{g} =c$.
The subalgebra $g \langle X_\alpha, X_{\alpha+\beta}, H_\beta\rangle g^{-1}$ is spanned by 
$X_\alpha, X_{\alpha+\beta}, X_{\beta}-Y_{\beta}$.
We denote \begin{equation}\u_{3,11}=\langle X_\alpha, X_{\alpha+\beta}, H_\beta \rangle ~\text{and}~ 
\u_{3,12}=\langle X_\alpha, X_{\alpha+\beta}, X_{\beta}-Y_{\beta} \rangle.\end{equation} Again, these are
 conjugate under $\mathrm{SL}_3(\mathbb{C})$, but not under $\mathrm{SL}_3(\mathbb{R})$.

We now focus on the subalgebras  of $\mathfrak{sl}_3(\mathbb{C})$ in Table \ref{table3sl3C} that are presented  with a non-real basis. For these cases, we first must determine if they have a real point.

\noindent  $\underline{\langle X_\alpha, X_{\alpha+\beta}, (a-1) H_\alpha+aH_\beta \rangle, a \notin \mathbb{R}}$:   Let $\u^a$ denote the subalgebra spanned by $X_\alpha$, $X_{\alpha+\beta}$, $H^a=(a-1)H_\alpha+aH_\beta$. 
In \cite{dr16a} it is shown that $\u^a$ and $\u^b$ are $G$-conjugate if and
only if $a=b$ or $a=b^{-1}$. Now $\sigma(\u^a) = \u^{\bar a}$. Hence if the
orbit of $\u^a$ has a real subalgebra then $a=\bar a$ or $\bar a= a^{-1}$.
In the first case $\u^a$ itself is real. So we consider the second case.
In \cite{dr16a} it is shown that $g_0\cdot \u^a = \u^{a^{-1}}$ where
\begin{equation}g_0 = \begin{pmatrix} 1&0&0\\ 0&0&1\\ 0&-1&0\end{pmatrix}.\end{equation}
The stabilizer of $\u^a$ is listed in Table \ref{tab:stab3}. With some explicit computation involving
that stabilizer and the element $g_0$ we see that 
\begin{equation}\hat h=\begin{pmatrix} 1&0&0\\0&0&i\\0&i&0\end{pmatrix}\end{equation}
lies in $\mathcal{Z}_G(\u^a)g_0^{-1} \cap Z^1 G$. Setting
\begin{equation}g_1 = \begin{pmatrix} 1&0&0\\ 0&-i&1\\ 0& -\tfrac{1}{2} & \tfrac{1}{2}i
\end{pmatrix},\end{equation}
we get $g_1^{-1} \sigma(g_1) = \hat h$. We have that $g_1\cdot X_\alpha =
\tfrac{1}{2} i X_\alpha -X_{\alpha+\beta}$, $g_1\cdot X_{\alpha+\beta} = \tfrac{1}{2} X_{\alpha} -iX_{\alpha+\beta}$. Furthermore,
since $\bar a = a^{-1}$ we have $a=x+iy$ with $x^2+y^2=1$. Then
$\tfrac{a+1}{a-1} = i\tfrac{y}{x-1}$. (Note that we cannot have $x=1$ as
$a\neq 1$.) A short calculation shows that $g_1H^a g_1^{-1} = (a-1) v_t$ with
\begin{equation}v_t = \begin{pmatrix} 1 &0&0\\ 0&-\tfrac{1}{2} & -t\\
  0&t & -\tfrac{1}{2} \end{pmatrix}, \text{where} ~
t=\tfrac{y}{2(x-1)}.\end{equation}
Hence $g_1\cdot \u^a$ has real basis $X_\alpha,X_{\alpha+\beta},v_t$. We note that $t$ runs through
$(-\infty,\infty)$ as $x$ runs through $(-1,1)$.  We denote $\u_{3, 8}^t=\big\langle X_\alpha, X_{\alpha+\beta}, H_\alpha +\frac{1}{2}H_\beta- t X_{\beta}+t Y_{\beta}\big\rangle$,
$t\in \mathbb{R}\setminus \{0\}$.

Write $\v = g_1\cdot \u^a$. 
For $g\in\mathcal{Z}_G(\u^a)$ define
$\tau(g) = \hat h \sigma(g) \hat h^{-1}$. We have an isomorphism
$\psi : \mathcal{Z}_G(\u^a)\to \mathcal{Z}_G(\v)$ with $\psi(g) = g_1 gg_1^{-1}$. Then $\psi(\tau(g)) =
\sigma(\psi(g))$. Since $\v$ is real the group $\mathcal{Z}_G(\v)$ is stable under $\sigma$.
This implies that $\tau(\mathcal{Z}_G(\u^a))=\mathcal{Z}_G(\u^a)$. Let $H^1 (\mathcal{Z}_G(\u^a),\tau)$ be the first Galois cohomology
set, where the conjugation acts by $\tau$. Similarly we consider
$H^1(\mathcal{Z}_G(\v),\sigma)$. It follows that $\psi : H^1(\mathcal{Z}_G(\u^a),\tau)\to H^1(\mathcal{Z}_G(\v),\sigma)$,
$\psi([g]) =[\psi(g)]$ is a bijection. The reductive part of $\mathcal{Z}_G(\u^a)$ consists of
$\diag(g_{11},g_{22},g_{33})$. The image under $\tau$ of such an element is
$\diag(\bar g_{11},\bar g_{33},\bar g_{22})$. By Lemmas \ref{lem:T2}, \ref{lem:sansuc} 
it follows that $H^1(\mathcal{Z}_G(\u^a),\tau)$ is
trivial. Hence $H^1(\mathcal{Z}_G(\v),\sigma)$ is trivial as well. We conclude that the
complex orbit of $\v$ has one real orbit, with representative $\u_{3, 8}^t$, defined above.

\noindent  $\underline{\langle X_\alpha, Y_{\beta}, H_\alpha+aH_\beta \rangle, a\notin \mathbb{R}}$:  In this case we let $\u^a$ denote the subalgebra spanned by $X_\alpha, Y_\beta$, $H^a=H_\alpha+aH_\beta$.   
In \cite{dr16a} it is shown that $\u^a$, $\u^b$ are $G$-conjugate if and
only if $a=b$ or $a=b^{-1}$. So the situation is similar to the previous
example. Again we consider the case where $\bar a= a^{-1}$.
We have $c\cdot \u^a = \u^{a^{-1}}$ where $c$ is the element from \eqref{firstc}.
We have $c^2=1$ so that $c$ itself is a cocyle in $\mathcal{Z}_G(\u^a)c^{-1}$. Furthermore,
$c = g_1^{-1} \sigma(g_1)$ with
\begin{equation}g_1 = \begin{pmatrix} 1&0&1\\ 0&i&0\\ \tfrac{1}{2}i & 0 & -\tfrac{1}{2}i
\end{pmatrix}.\end{equation}
We have that $g_1\cdot X_\alpha = -iX_\alpha+\tfrac{1}{2}Y_\beta$, $g_1\cdot Y_\beta =
-iX_\alpha-\tfrac{1}{2} Y_\beta$ and $g_1\cdot H^a = (a-i)v_t$ with
\begin{equation}v_t = \begin{pmatrix} -\tfrac{1}{2}&  0 & t\\
  0&1&0\\ -\tfrac{t}{4} & 0 & -\tfrac{1}{2}\end{pmatrix}.\end{equation}
Here $t=\tfrac{y}{x-1}$ when $a=x+iy$ with $x^2+y^2=1$. So $\g_1\cdot \u^a$ is
spanned by $X_\alpha$, $Y_\beta$, $v_t$ and is therefore real. Exactly as in the previous
example we have that the $G$-orbit of $\g_1\cdot \u^a$ contains exactly one
$G^\sigma$-orbit.  We define  \begin{equation}\u_{3, 10}^t=\bigg\langle X_\alpha, Y_\beta, -\frac{1}{2}H_\alpha +\frac{1}{2}H_\beta- t X_{\alpha+\beta}+\frac{t}{4} Y_{\alpha+\beta}\bigg\rangle,
~t\in \mathbb{R}\setminus \{0\}.\end{equation}
\end{proof}

\begin{theorem}\label{fourfivedsl3solvt}
A complete list of inequivalent, four- and five-dimensional solvable  real subalgebras of  $\mathfrak{sl}_3(\mathbb{R})$, up to conjugation in $\mathrm{SL}_3(\mathbb{R})$, is given in Table 
\ref{fourfivedsl3solv}.
\end{theorem}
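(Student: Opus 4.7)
The plan is to follow the same cohomological procedure established in the proofs of Theorems \ref{onedsl3t} through \ref{threedsl3solvt}. Starting from the list of four- and five-dimensional solvable complex subalgebras of $\mathfrak{sl}_3(\C)$ in the appendix, I would separate the representatives into two groups: those presented with a real basis, and those depending on a complex parameter $a$ which is not necessarily real. For the first group I would read off the stabilizers $\mathcal{Z}_G(\u)$ from the stabilizer tables and argue triviality of $H^1\mathcal{Z}_G(\u)$, using Lemmas \ref{lem:T1}, \ref{lem:T2}, \ref{lem:sansuc}, Proposition \ref{prop:compodd}, and the vanishing of $H^1\GL_n(\C)$ wherever possible; each such complex orbit then contributes a single real orbit listed verbatim in Table \ref{fourfivedsl3solv}.

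For the cases where $H^1\mathcal{Z}_G(\u)$ is nontrivial (expected, by analogy with the three-dimensional solvable case, to occur when the stabilizer has a component group of order $2$ acting through a matrix like $c$ in \eqref{firstc} or \eqref{secondcg}), I would exhibit the nontrivial cocycle $c$ explicitly, solve $g^{-1}\bar g = c$ for an element $g\in \SL_3(\C)$, and then list the second orbit representative $g\cdot \u$, rewriting it in a real basis. I expect at most a handful of such splittings, each producing a pair $\u_{4,i}$, $\u_{4,i+1}$ or $\u_{5,j}$, $\u_{5,j+1}$ of algebras that are $\SL_3(\C)$-conjugate but not $\SL_3(\R)$-conjugate; these will be separated by horizontal dashed lines in Table \ref{fourfivedsl3solv}.

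For the second group, the parameterized families $\u^a$, I would first recall from \cite{dr16a, dr16b, dr18} the precise equivalence relation on the parameter $a$ governing $G$-conjugacy. Since $\sigma(\u^a)=\u^{\bar a}$, the orbit of $\u^a$ contains a real point exactly when $\bar a$ is related to $a$ under this equivalence. The case $\bar a = a$ gives the already-real family and is listed directly. For each remaining compatibility equation on $a$, I would construct an explicit $g_0\in G$ with $g_0\cdot \u^a = \u^{\bar a}$ and verify that $g_0$ is a cocycle, then solve $g_1^{-1}\sigma(g_1)=g_0$ to produce a real representative $g_1\cdot \u^a$ in terms of a real parameter $t$ (or $u$, $y$) extracted from $a$, just as was done for $\u_{3,8}^t$ and $\u_{3,10}^t$ in the previous theorem. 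Finally, for each real representative I would analyze the twisted stabilizer by transporting via $g_1$ and conclude triviality of the associated twisted Galois cohomology through Lemmas \ref{lem:T1}, \ref{lem:T2}, \ref{lem:sansuc} and Proposition \ref{prop:compodd}, so that each complex orbit with real points contributes exactly one real family.

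The main obstacle will be the bookkeeping for the parameterized four-dimensional families: identifying which parameter symmetries from the complex classification produce additional families of real algebras over and above the $\bar a = a$ branch, and then verifying that the several real families arising from distinct compatibility equations are not in fact $\SL_3(\R)$-equivalent via an ``exterior'' conjugation (as happened with $\s_1^u,\s_2^u,\s_3^u$ in Theorem \ref{onedsl3t}, which collapsed to one family). Handling this requires explicit matrix computations to test for conjugating elements in $\SL_3(\R)$ between the candidate real representatives, after which the inequivalent ones are entered into Table \ref{fourfivedsl3solv}, completing the classification.
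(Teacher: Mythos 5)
Your proposal is correct and follows essentially the same route as the paper: triviality of $H^1$ of the stabilizer for all but two subalgebras, an order-two component group producing a second real orbit for $\langle X_\alpha,X_{\alpha+\beta},H_\alpha,H_\beta\rangle$ and $\langle X_\alpha,Y_\beta,H_\alpha,H_\beta\rangle$ via the cocycles $c$ of \eqref{secondcg} and \eqref{firstc}, and a reality check on the lone parameterized family. The ``main obstacle'' you anticipate does not in fact arise here, since $\u^a\sim\u^b$ iff $a=b$ forces $\bar a=a$, so the only branch is the already-real one and no twisted-stabilizer analysis is needed.
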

\begin{proof}
In the list of four- and five-dimensional solvable subalgebras of $\mathfrak{sl}_3(\mathbb{C})$ in  Table \ref{table4sl3C} only   $\u^a=\langle X_\alpha, X_\beta, X_{\alpha+\beta}, aH_\alpha + H_\beta \rangle$,  is presented with a non-real basis, which occurs for $a\notin \mathbb{R}$.  It is only for this case that we must first find real points, when they exist.  

However, from Table \ref{table4sl3C},  $\u^a$ $\sim$ $\u^b$ if and only if $a=b$.
Since $\sigma(\u^a) = \u^{\bar{a}}$, if $\u^a$ has a real subalgebra then $a=\bar{a}$. That is, if the orbit of $\u^a$ (with respect to $\mathrm{SL}_3(\mathbb{C})$) has a real subalgebra, then $a$ is real. Hence, we only need to consider $\u^a$, for $a$ real. 

Let $\u$ be a solvable subalgebra of $\SL(3,\C)$ of dimension 4 or 5 as given in  Table \ref{table4sl3C}. Suppose that $\u$ is not one of 
$\langle X_\alpha, X_{\alpha+\beta}, H_\alpha, H_\beta\rangle$, $\langle X_\alpha, Y_{\beta}, H_\alpha, H_\beta\rangle$. The stabilizer of $\u$ is given in 
Tables \ref{tab:stab5}, \ref{tab:stab6}. In all cases, by a combination of Lemmas \ref{lem:T1}, \ref{lem:sansuc} and
Proposition \ref{prop:compodd} we see that the first Galois cohomology set of the stabilizer is trivial. 
Hence, in all cases except these two, each complex orbit  has exactly one real orbit, and the subalgebra is listed as a 
 single row in Table \ref{fourfivedsl3solv}, just as it was for the complex case in Table \ref{table4sl3C}.

\noindent $\underline{ \langle X_\alpha,X_{\alpha+\beta},H_\alpha,H_\beta\rangle }$: From Table \ref{tab:stab5} we see that 
$\mathcal{Z}_G(\u) = \mathcal{Z}_G(\u)^\circ \times F$ where $F$ is a group of two elements. The first Galois cohomology of $\mathcal{Z}_G(\u)^\circ$
is trivial. Hence $H^1 \mathcal{Z}_G(\u)$ has two elements. The nontrivial class is 
represented by 
the element $c$ from Eq. \eqref{secondcg}, and $g \in \mathrm{SL}_3(\mathbb{C})$  from the same equation is such that $g^{-1} \bar{g} =c$.
The  second real orbit has representative $g\cdot \u$, which has
basis $X_\alpha$, $X_{\alpha+\beta}$, $X_\beta-Y_\beta$, $2H_\alpha+H_\beta$.
We define \begin{equation} \u_{4,1}=\langle X_\alpha, X_{\alpha+\beta}, H_\alpha, H_\beta \rangle~ \text{and}~ \u_{4,2}=\langle X_\alpha, X_{\alpha+\beta}, X_\beta-Y_\beta, 2H_\alpha+H_\beta \rangle.\end{equation}

\noindent $\underline{ \langle X_\alpha, Y_{\beta}, H_\alpha, H_\beta\rangle }$:
The Galois cohomology set again has two elements in this case. The nontrivial cocycle has representative is
the same element $c$ from Eq. \eqref{firstc}, and $g \in \mathrm{SL}_3(\mathbb{C})$  from Eq. \eqref{firstg} such that $g^{-1} \bar{g} =c$.
Then, the second real orbit has
representative with basis $X_\alpha$, $Y_\beta$, $-X_{\alpha+\beta}+Y_{\alpha+\beta}$, $H_\alpha-H_\beta$.
We define \begin{equation}\u_{4,3}=\langle X_\alpha, Y_{\beta}, H_\alpha, H_\beta \rangle ~\text{and}~ \u_{4,4}=\langle X_\alpha, Y_{\beta}, -X_{\alpha+\beta}+Y_{\alpha+\beta}, H_\alpha-H_\beta \rangle.\end{equation}
\end{proof}

\begin{theorem}\label{levisl3t}
A complete list of inequivalent, Levi decomposable  real subalgebras of  $\mathfrak{sl}_3(\mathbb{R})$, up to conjugation in $\mathrm{SL}_3(\mathbb{R})$, is given in Table 
\ref{levisl3}.
\end{theorem}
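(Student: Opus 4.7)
The plan is to follow exactly the template established in the preceding theorems of this section. Every Levi decomposable subalgebra of $\mathfrak{sl}_3(\mathbb{C})$ has Levi factor isomorphic to $\mathfrak{sl}_2(\mathbb{C})$, so the list of such subalgebras in the appendix consists of algebras of the form $\mathfrak{s} \ltimes \mathfrak{r}$ for various embeddings of $\mathfrak{sl}_2$ and various solvable radicals $\mathfrak{r}$. For each representative $\mathfrak{u}$ in that list I would first check whether its given basis is already real. When it is, the $G$-orbit trivially contains a real point and one moves immediately to the cohomology step. When $\mathfrak{u} = \mathfrak{u}^a$ appears in a parametric family given in non-real form, I would use $\sigma(\mathfrak{u}^a)=\mathfrak{u}^{\bar a}$ together with the $\SL_3(\mathbb{C})$-equivalence relation recorded in the appendix to pin down the parameter values for which the orbit contains a real point, and then produce an explicit cocycle $g_0$ with $g_0\cdot \mathfrak{u}=\sigma(\mathfrak{u})$ and an $h\in \SL_3(\mathbb{C})$ with $h^{-1}\sigma(h)=g_0$, exactly as was done for $\mathfrak{u}_{3,8}^t$ and $\mathfrak{u}_{3,10}^t$ in the proof of Theorem \ref{threedsl3solvt}.

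For each real orbit thus obtained, I would determine $H^1\mathcal{Z}_G(\mathfrak{u})$ from the stabilizer data listed in the appendix. The computational machinery is identical to that used in Theorems \ref{onedsl3t}--\ref{fourfivedsl3solvt}: Lemmas \ref{lem:T1}, \ref{lem:T2}, and \ref{lem:sansuc} handle the torus-times-unipotent-radical pieces of the stabilizer; Proposition \ref{prop:compodd} disposes of component groups of odd prime-power order; and triviality of $H^1 \GL_n(\mathbb{C})$ deals with any $\GL_2$ or $\GL_1$ direct factors. In the cases where the stabilizer turns out to have a component group of order two whose nontrivial generator is conjugate to the involutions $c$ from Eq. \eqref{firstc} or Eq. \eqref{secondcg} already encountered, the cohomology set will have two classes, and a corresponding second real orbit representative is produced by applying the associated conjugating element $g$ from Eq. \eqref{firstg} or Eq. \eqref{secondcg} to $\mathfrak{u}$ and recording a real basis of $g\cdot \mathfrak{u}$.

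The main obstacle I expect is handling those Levi decomposable subalgebras whose Levi factor is the irreducible three-dimensional copy of $\mathfrak{sl}_2$ inside $\mathfrak{sl}_3(\mathbb{C})$, since for these the stabilizer typically contains a $\mathrm{PSL}_2(\mathbb{C})$ factor whose first Galois cohomology is nontrivial of order two, precisely as occurred for the semisimple case $\mathfrak{u}_{3,18}$. For such subalgebras the analysis must mimic the argument used there: isolate the $\mathrm{PSL}_2(\mathbb{C})$ factor of $\mathcal{Z}_G(\mathfrak{u})$, apply Proposition \ref{prop:compodd} to promote the cohomology from the identity component to the full stabilizer when the component group has odd order, identify the nontrivial cocycle as $c$, and then compute the real basis of $g\cdot \mathfrak{u}$ using the element $g$ of Eq. \eqref{firstg}. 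Once this has been carried out for each entry of the complex list, the resulting inequivalent real subalgebras can be organized directly into Table \ref{levisl3}, with horizontally dashed-line groupings indicating the pairs of $\SL_3(\mathbb{R})$-orbits that fuse into a single $\SL_3(\mathbb{C})$-orbit. Apart from this $\mathrm{PSL}_2$-type case, the verification is a routine bookkeeping exercise of the kind already exhibited at length earlier in this section.
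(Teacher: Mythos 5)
Your overall strategy coincides with the paper's, but nearly all of the case analysis you anticipate turns out to be vacuous for this theorem. Every Levi decomposable subalgebra in Table \ref{table6sl3C} is already presented with a real basis and there are no parametric families, so the real-point search is not needed. More importantly, every entry has Levi factor the regular copy $\langle X_{\alpha+\beta}, Y_{\alpha+\beta}, H_\alpha+H_\beta\rangle$: the principal (irreducible) $\mathfrak{sl}_2$ acts irreducibly on $\mathbb{C}^3$ and hence admits no nonzero radical normalized by it inside $\mathfrak{sl}_3(\mathbb{C})$, so the ``main obstacle'' you describe --- stabilizers with a $\mathrm{PSL}_2(\mathbb{C})$ factor as in the semisimple case $\u_{3,18}$ --- simply does not occur among Levi decomposable subalgebras. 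Tables \ref{tab:stab5}, \ref{tab:stab6}, \ref{tab:stab7} show that all five stabilizers are isomorphic to $\GL(2,\mathbb{C})$, whose first Galois cohomology is trivial, so each complex orbit contains exactly one real orbit and Table \ref{levisl3} has no dashed-line groupings at all (contrary to the splitting your write-up anticipates). Your method, faithfully executed, would discover this and land on the correct table, so there is no genuine gap; but the actual proof is a two-line observation about the stabilizers rather than the multi-case computation you outline.
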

\begin{proof}
Each Levi decomposable  subalgebra of $\mathfrak{sl}_3(\mathbb{C})$ in  Table \ref{table6sl3C} is presented with a real basis. Hence, the first step of finding real points is not needed. Further, from Tables \ref{tab:stab5}, \ref{tab:stab6}, \ref{tab:stab7} we see that the stabilizer of each such subalgebra in $\SL(3,\C)$ is isomorphic
to $\GL(2,\C)$. Hence the Galois cohomology of the stabilizer is trivial in all cases, so for all  subalgebras, each complex orbit has a single real orbit. 
\end{proof}

\begin{table} [H]
\renewcommand{\arraystretch}{1.6} 
\caption{One-Dimensional Real Subalgebras of $\mathfrak{sl}_3(\mathbb{R})$ from Theorem \ref{onedsl3t} and \cite{win04}. } \label{onedsl3}
\centering
\scalebox{0.88}{
\begin{tabular}{|c|c|c|c|}
\hline
Dim. &  Subalgebras of  $\mathfrak{sl}_3(\mathbb{R})$ from Theorem \ref{onedsl3t} & Subalgebras of  $\mathfrak{sl}_3(\mathbb{R})$ from \cite{win04}\\
\hline \hline
1 & $\u_{1,1}=\langle X_\alpha + X_\beta \rangle$  & $W_{1,5}=\langle K_2-L_3+P_2 \rangle$
 \\
\hline
1 & $\u_{1,2}=\langle X_\alpha  \rangle$  & $W_{1,4}=\langle K_2-L_3 \rangle$
 \\
\hline
1 & $\u_{1,3}=\langle X_\alpha + H_\alpha+2H_\beta \rangle$  &  $W_{1,3}=\langle D+K_2-L_3\rangle$
 \\
\hline
1 & $\mathfrak{u}^a_{1,4}=\langle H_\alpha + a H_\beta \rangle$, $a\in \mathbb{R}$, $a \neq 0, 1$.  &  $\mathfrak{u}^{ \Psi(\theta) }_{1,4}\sim W_{1,1}^\theta = \langle 2\cos(\theta)K_1+3\sin(\theta) D\rangle$, where\\
& $\mathfrak{u}^a_{1,4} \sim \mathfrak{u}^b_{1,4}$   iff $b=a, 1-a, \frac{1}{a}, \frac{1}{1-a}, \frac{a}{a-1}$, or $\frac{a-1}{a}$ &  $\theta \in \big(0,\arctan\big(\frac{1}{9}\big)\big]$ and $\Psi(\theta)=  \frac{ \cos(\theta)-3\sin(\theta)}{ \cos(\theta)+3\sin(\theta) }.$   \\
 & \big(Equivalently, restricting $a,b \in \big[\frac{1}{2},1\big)$,    & Note: In \cite{win04}, $\theta \in [0,\pi)$, but,  \\
 & $\u_{1,4}^a \sim \u_{1,4}^b$ iff $a=b$. Further, & this yields equivalent subalgebras. \\
 &for any $c\in \mathbb{R} \setminus \big\{0, 1\big\}$ & All non-equivalent subalgebras are included with \\
 &$\u_{1,4}^c \sim \u_{1,4}^a$ for some $a\in \big[\frac{1}{2},1\big)$.\big)& $\theta \in \big[0, \arctan\big( \frac{1}{9} \big) \big]$.  \\
\hline
1 &  $\u_{1,5}=\langle H_\alpha +  H_\beta \rangle$ &   $W_{1,1}^{0}$\\
\hdashline 
1 &  $\u_{1,6}=\langle -X_{\alpha+\beta}+Y_{\alpha+\beta} \rangle$&  $W_{1,2}^0= \langle L_3 \rangle$
\\
\hline
1 &  $\u_{1,7}^{y}=\big\langle H_\alpha +\frac{1}{2}H_\beta-yX_{\beta}+yY_{\beta} \big\rangle$,  $y \in \mathbb{R} \setminus \{ 0\}$. &  $\u_{1,7}^{\frac{1}{4\lambda}} \sim W_{1,2}^\lambda=\langle L_3+\lambda D\rangle$, $\lambda > 0$. \\
& $\u_{1,7}^{y} \sim \u_{1,7}^{y'}$ iff $y =\pm y'$&  $W_{1,2}^\lambda \sim W_{1,2}^\mu$ iff $\lambda =\pm \mu$
\\
\hline
\end{tabular} }
\end{table}

\begin{table} [H]\renewcommand{\arraystretch}{1.6} \caption{Two-Dimensional   Real Subalgebras of $\mathfrak{sl}_3(\mathbb{R})$ from Theorem \ref{twodsl3t} and \cite{win04}. } \label{twodsl3}
\centering
\scalebox{0.99}{
\begin{tabular}{|c|c|c|clclclc|c|}
\hline
Dim. &  Subalgebras of  $\mathfrak{sl}_3(\mathbb{R})$ from Theorem  \ref{twodsl3t} & Subalgebras of  $\mathfrak{sl}_3(\mathbb{R})$ from \cite{win04} \\
\hline \hline
2 & $\u_{2,1}=\langle X_\alpha + X_\beta, X_{\alpha+\beta} \rangle$  &  $W_{2,6}=\langle K_2-L_3+P_2, P_1\rangle$
 \\
\hline
2 & $\u_{2,2}=\langle X_\alpha, H_\alpha+2H_\beta  \rangle$  & $W_{2,3}=\langle K_2-L_3, D\rangle$
 \\
\hline
2 & $\u_{2,3}=\langle X_\alpha, X_{\alpha+\beta} \rangle$  & $W_{2,4}=\langle K_2-L_3, P_1\rangle$
 \\
\hline
2 & $\u_{2,4}=\langle X_\alpha,  Y_\beta \rangle$   & $W_{2,5}=\langle P_1, P_2\rangle$
 \\
\hline
2 & $\u_{2,5}=\langle H_\alpha,  H_\beta \rangle$   & $W_{2,2}= \langle K_1, D\rangle$
 \\
 \hdashline
2 & $\u_{2,6}=\langle  -X_{\alpha+\beta}+Y_{\alpha+\beta}, H_\alpha - H_\beta \rangle$   & $W_{2,1}= \langle L_3, D\rangle$
 \\
\hline
2 & $\u_{2,7}=\langle X_\alpha+X_\beta,  H_\alpha+H_\beta \rangle$   & $W_{2,10}=\big\langle K_1+\frac{1}{2}D, K_2-L_3+P_2\big\rangle$
 \\
\hline
2 & $\u_{2,8}=\langle X_\alpha, -H_\alpha+H_\beta+3X_{\alpha+\beta}\rangle$   & $W_{2,8}=\big\langle K_1-\frac{1}{6}D+P_1, K_2-L_3\big\rangle$
 \\
\hline
2 & $\u_{2,9}=\langle X_\alpha, -2H_\alpha-H_\beta+3Y_{\beta} \rangle$  & $W_{2,9}=\big\langle K_1+\frac{1}{6}D+P_2, P_1\big\rangle$
 \\
\hline
2 & $\u_{2,10}^a=\langle X_\alpha, aH_\alpha+(2a+1)H_\beta \rangle$, $a\in \mathbb{R}$. & $\u_{2,10}^{-\lambda-\frac{1}{2}}\sim W_{2,7}^{\lambda}=\langle K_1+\lambda D, K_2-L_3\rangle$,
 \\
 &$\u_{2,10}^a \sim \u_{2,10}^b$   iff $a=b$ & $\lambda \in \mathbb{R}$.
 \\
\hline
\end{tabular}}
\end{table}

\begin{table} [H]\renewcommand{\arraystretch}{1.6} \caption{Semisimple  Real Subalgebras of $\mathfrak{sl}_3(\mathbb{R})$ from Theorem \ref{semisimplesl3t} and \cite{win04}.    } \label{semisimplesl3}\centering
\scalebox{0.99}{
\begin{tabular}{|c|c|c|clclclc|c|}
\hline
Dim. &  Subalgebras of  $\mathfrak{sl}_3(\mathbb{R})$ from Theorem \ref{semisimplesl3t} & Subalgebras of  $\mathfrak{sl}_3(\mathbb{R})$ from \cite{win04}\\
\hline \hline
3 & $\u_{3,16}=\langle X_{\alpha+\beta}, Y_{\alpha+\beta}, H_\alpha+H_\beta \rangle$  &  $W_{3,12}=\langle K_1, K_2, L_3\rangle$
 \\
\hline
3 & $\u_{3,17}=\langle X_{\alpha}+X_\beta, 2Y_{\alpha}+2Y_{\beta}, 2H_\alpha+2H_\beta \rangle$  &  $W_{3,13}=\langle L_3, P_1+R_1, P_2+R_2\rangle$ 
\\
\hdashline
3 &  $\u_{3,18}=\langle -X_{\alpha+\beta}+Y_{\alpha+\beta},  2X_\alpha-\frac{1}{4}Y_\alpha,  -\frac{1}{4}X_\beta+2Y_\beta\rangle$&
$W_{3,14}=\langle L_3, P_1-R_1, P_2-R_2\rangle$
 \\
\hline
\end{tabular}}
\end{table}

\begin{table} [H]\renewcommand{\arraystretch}{1.6} \caption{Three-Dimensional Solvable  Real Subalgebras of $\mathfrak{sl}_3(\mathbb{R})$ from Theorem \ref{threedsl3solvt} and \cite{win04}.   } \label{threedsl3solv}\centering
\scalebox{0.81}{
\begin{tabular}{|c|c|c|clclclc|c|}
\hline
Dim. & Subalgebras of  $\mathfrak{sl}_3(\mathbb{R})$ from Theorem \ref{threedsl3solvt} &  Subalgebras of  $\mathfrak{sl}_3(\mathbb{R})$ from \cite{win04}\\
\hline \hline
3 & $\u_{3,1}=\langle X_\alpha, X_{\alpha+\beta}, 2H_\alpha+H_\beta \rangle$  &  $W_{3,5}^{\frac{1}{6}}=\big\langle K_1+\frac{1}{6}D, K_2-L_3, P_1\big\rangle$ 
 \\
\hline
3 & $\u_{3,2}=\langle X_\alpha, Y_\beta, H_\alpha-H_\beta  \rangle$  & $W_{3,6}^{\frac{\pi}{2}}= \langle D, P_1, P_2\rangle$
 \\
\hline
3 & $\u_{3,3}=\langle X_\alpha, X_{\alpha+\beta}, 2H_\alpha+H_\beta+X_\beta \rangle$  &    $W^{\frac{2}{3}}_{3,9}=\big\langle P_2-R_2+\frac{2}{3}D,K_2-L_3, P_1\big\rangle$
 \\
\hline
3 & $\u_{3,4}=\langle Y_\alpha, Y_{\alpha+\beta}, 2H_\alpha+H_\beta+X_\beta  \rangle$   & $W_{3,10}=\langle D+K_2-L_3, P_1, P_2\rangle$
 \\
\hline
3 & $\u_{3,5}=\langle X_\alpha+X_\beta, X_{\alpha+\beta}, H_\alpha+H_\beta \rangle$   & $W_{3,7}=\langle K_1+\frac{1}{2}D, K_2-L_3+P_2, P_1\rangle$
 \\
\hline
3 & $\u_{3,6}=\langle X_\alpha,  H_\alpha, H_\beta \rangle$   &  $W_{3,1}=\langle K_1, K_2-L_3, D\rangle$
 \\
\hline
3 & $\u_{3,7}^a=\langle X_\alpha, X_{\alpha+\beta}, (a-1)H_\alpha+aH_\beta\rangle, a \neq \pm 1, a\in \mathbb{R}$. &  $W^\lambda_{3,5}=\langle K_1+\lambda D, K_2-L_3, P_1\rangle$,  $\lambda \in \big(\frac{-1}{2}, \frac{1}{6} \big) \setminus \{ \frac{-1}{6}\}$.\\
& $\u_{3,7}^a \sim \u_{3,7}^b$ iff $a=b$ or $ab=1$  & $\u_{3,7}^{a}\sim W_{3,5}^{\frac{a}{2a-4}}$, $a\in (-1,1)\setminus \big\{\frac{1}{2}\big\}.$ \\
&(Equivalently, restricting $a, b \in(-1,1)$,  & $\u_{3,7}^{\frac{1}{2}} \sim W_{3,2}= \langle D, P_1, K_2-L_3\rangle$
\\
 &$\u_{3,7}^a \sim \u_{3,7}^b$ iff $a=b$. Further, for any $c\in \mathbb{R} \setminus \{ \pm 1 \}$, &Note: $\u^{\frac{1}{2}}_{3,7}\sim \u^{2}_{3,7} \sim W^{\frac{-1}{6}}_{3,5}$, and  there's a minor  
 \\
 &$\u_{3,7}^c \sim \u_{3,7}^a$  for some $a\in (-1,1)$.)& oversight in \cite{win04} since $W_{3,11} \sim W_{3,5}^{\frac{1}{18}}$ ($\sim \u^{\frac{-1}{4}}_{3,7}$), where  \\
 &&$W_{3,11}= \big\langle K_1+\frac{1}{3}D+P_2, K_2-L_3, P_1\big\rangle$.\\
 \hline
 3 & $\u_{3,8}^t=\big\langle X_\alpha, X_{\alpha+\beta}, H_\alpha+\frac{1}{2}H_\beta-tX_\beta+tY_\beta \big\rangle$, where& $W^\lambda_{3,9}=\langle P_2-R_2+\lambda D, K_2-L_3,P_1\rangle$,   $\lambda\in \big(0, \frac{2}{3}\big)$.\\
 & 
 $t\in \mathbb{R}\setminus \{0\}$. &   $\u_{3,8}^{t} \sim W_{3,9}^{\frac{2}{\sqrt{4t^2+9}}}, t\neq 0.$
 \\
 &   $\u_{3,8}^t\sim \u_{3,8}^s$ iff $t=\pm s$ &  $W^\lambda_{3,9} \sim W^\mu_{3,9}$ iff $\lambda=\pm \mu$ \\
 && Note: In \cite{win04}, $\lambda>0$, but, for  $\lambda > \frac{2}{3}$, \\
 &&  $W^\lambda_{3,9}$ $\sim$  $W^{ \frac{3\lambda^2}{2} -\frac{1}{2}- \frac{\sqrt{9\lambda^4-4\lambda^2}}{2}    }_{3,5}$. Observe that \\ && for  $\lambda >\frac{2}{3}$, 
    $\frac{3\lambda^2}{2} -\frac{1}{2}- \frac{\sqrt{9\lambda^4-4\lambda^2}}{2}  \in \big(\frac{-1}{2}, \frac{1}{6}\big) \setminus \big\{ \frac{-1}{6} \big\}$.\\
\hline
3 & $\u_{3,9}^a=\langle X_\alpha, Y_\beta, H_\alpha+aH_\beta \rangle, a\neq \pm1$   & $W^\theta_{3,6}=\big\langle 2 \cos(\theta) K_1 +\frac{1}{3} \sin(\theta) D, P_1, P_2  \big\rangle, \theta \in \big(0, \frac{\pi}{2}\big)\setminus \big\{  \frac{\pi}{4}\big\}$\\ 
&$\u_{3,9}^a \sim \u_{3,9}^b$ iff $a=b$ or $ab=1$ &$\u_{3,9}^{\frac{3\cos(\theta)-\sin(\theta)}{3\cos(\theta)+\sin(\theta)}} \sim W_{3,6}^\theta$
\\
& (Equivalently, restricting $a, b \in(-1,1)$, & Note: $\u_{3,9}^{\frac{1}{2}}\sim W_{3,3}=\big\langle K_1+\frac{1}{6}D, P_1, P_2\big\rangle = W^{\frac{\pi}{4}}_{3,6}$ \\
&$\u_{3,9}^a \sim \u_{3,9}^b$ iff $a=b$. Further, for any $c\in \mathbb{R} \setminus \{ \pm 1 \}$,& \\
&$\u_{3,9}^c \sim \u_{3,9}^a$  for some $a\in (-1,1)$.)& \\
\hline
3 & $\u_{3,10}^t=\big\langle  X_\alpha, Y_\beta, -\frac{1}{2}H_\alpha +\frac{1}{2}H_\beta-t X_{\alpha+\beta} +\frac{t}{4}Y_{\alpha+\beta} \big\rangle$, & 
$\u_{3,10}^{\frac{1}{2\lambda}} \sim W_{3,8}^\lambda =\langle L_3+\lambda D, P_1, P_2\rangle$, $\lambda> 0$. \\
& where $t\in \mathbb{R} \setminus \{0\}$.  $\u_{3,10}^t \sim \u_{3,10}^s$ iff $t=\pm s$&\\  
\hline
3 & $\u_{3,11}=\langle X_\alpha, X_{\alpha+\beta}, H_\beta \rangle$   &  $W_{3,5}^{\frac{-1}{2}}=\big\langle K_1-\frac{1}{2}D,K_2- L_3, P_1\big\rangle$ \\
 \hdashline
 3 & $\u_{3,12}=\langle X_\alpha, X_{\alpha+\beta},  X_\beta-Y_\beta \rangle$ &  $W^0_{3,9}=\langle P_2-R_2, K_2-L_3, P_1\rangle$
 \\
\hline
3 & $\u_{3,13}=\langle X_\alpha, Y_\beta, H_\alpha+H_\beta \rangle$  & $W_{3,6}^0=\langle K_1, P_1, P_2\rangle$
 \\
 \hdashline
3 & $\u_{3,14}=\langle X_\alpha, Y_\beta, -X_{\alpha+\beta}+Y_{\alpha+\beta} \rangle$  &  $W_{3,8}^0=\langle L_3,P_1, P_2  \rangle$
 \\
\hline
3 & $\u_{3,15}=\langle X_\alpha, X_{\beta}, X_{\alpha+\beta} \rangle$   & $W_{3,4}=\langle K_2-L_3, P_1, P_2\rangle$
 \\
\hline
\end{tabular}}
\end{table}

\begin{table} [H]\renewcommand{\arraystretch}{1.6} \caption{Four- and Five-Dimensional Solvable Real Subalgebras of $\mathfrak{sl}_3(\mathbb{R})$
from Theorem \ref{fourfivedsl3solvt} and \cite{win04}.   } \label{fourfivedsl3solv}\centering
\scalebox{0.9}{
\begin{tabular}{|c|c|c|clclclc|c|}
\hline
Dim. &  Subalgebras of  $\mathfrak{sl}_3(\mathbb{R})$ from Theorem \ref{fourfivedsl3solvt} & Subalgebras of  $\mathfrak{sl}_3(\mathbb{R})$ from \cite{win04} \\
\hline \hline
4 & $\u_{4,1}=\langle X_\alpha, X_{\alpha+\beta}, H_\alpha, H_\beta \rangle$ & $W_{4,2}=\big\langle D, P_1, K_1-\frac{1}{6}D, K_2-L_3 \big\rangle$
 \\
 \hdashline
4 & $\u_{4,2}=\langle X_\alpha, X_{\alpha+\beta}, X_\beta-Y_\beta, 2H_\alpha+H_\beta \rangle$ & $W_{4,5}=\big\langle P_2-R_2, K_1+\frac{1}{6}D, K_2-L_3, P_1 \big\rangle$
\\
\hline
4 & $\u_{4,3}=\langle X_\alpha, Y_\beta, H_\alpha, H_\beta  \rangle$  & $W_{4,3}=\big\langle K_1+\frac{1}{6}D,P_1, K_1-\frac{1}{6}D, P_2 \big\rangle$
 \\
 \hdashline
 4 & $\u_{4,4}=\langle X_\alpha, Y_\beta,  -X_{\alpha+\beta} +Y_{\alpha+\beta}, H_\alpha - H_\beta  \rangle$  &
 $W_{4,4}=\langle L_3, D, P_1, P_2 \rangle$
 \\
\hline
4 & $\u_{4,5}=\langle X_\alpha, X_\beta, X_{\alpha+\beta}, H_\alpha+H_\beta \rangle$  &  $W_{4,6}^{\arctan(3)}=\langle 2K_1+D, K_2-L_3, P_1, P_2\rangle $  
 \\
\hline
4 & $\u_{4,6}^a=\langle X_\alpha, X_\beta, X_{\alpha+\beta}, aH_\alpha+H_\beta  \rangle, a\neq \pm 1$, $a\in \mathbb{R}$.   & 
$W_{4,6}^{\theta}= \big\langle  2 \cos(\theta) K_1+\frac{1}{3} \sin(\theta)D, K_2-L_3, P_1, P_2  \big\rangle$, 
 \\
 &$\u_{4,6}^a \sim \u_{4,6}^b$  iff $a=b$& $\theta \in  (0, \pi) \setminus \big\{   \frac{3\pi}{4}, \arctan(3)\big\}.$
 \\
 & & $\u_{4,6}^{\frac{3\cos(\theta)+\sin(\theta)}{2 \sin(\theta)}} \sim W_{4,6}^\theta$
 \\
\hline
4 & $\u_{4,7}=\langle X_\alpha,  X_\beta, X_{\alpha+\beta}, H_\alpha \rangle$   &   $W_{4,6}^{0} \sim W_{4,6}^\pi$ \\
\hline
4 & $\u_{4,8}=\langle X_\alpha, X_{\beta}, X_{\alpha+\beta}, H_\alpha-H_\beta\rangle$   & $W_{4,6}^{\frac{3\pi}{4}}=\big\langle 2 K_1-\frac{1}{3}D, K_2-L_3, P_1, P_2\big\rangle $ 
 \\
\hline
5 & $\u_{5,1}=\langle X_\alpha, X_\beta, X_{\alpha+\beta}, H_\alpha, H_\beta \rangle$   & $W_{5,3}=\langle K_1, D, K_2-L_3, P_1, P_2\rangle$
 \\
\hline
\end{tabular}}
\end{table}

\begin{table} [H]\renewcommand{\arraystretch}{1.6} \caption{Levi Decomposable  Real Subalgebras of $\mathfrak{sl}_3(\mathbb{R})$ from Theorem \ref{levisl3t} and \cite{win04}.   } \label{levisl3}\centering
\scalebox{0.88}{
\begin{tabular}{|c|c|c|clclclc|c|}
\hline
Dim. &  Subalgebras of  $\mathfrak{sl}_3(\mathbb{R})$  from Theorem \ref{levisl3t} & Subalgebras of  $\mathfrak{sl}_3(\mathbb{R})$ from \cite{win04}\\
\hline \hline
4 & $\u_{4,9}=\langle X_{\alpha+\beta}, Y_{\alpha+\beta}, H_\alpha+H_\beta \rangle \oplus \langle H_\alpha-H_\beta\rangle$  &   $W_{4,1}=\langle L_3, K_1, K_2, D \rangle$
 \\
\hline
5 & $\u_{5,2}=\langle X_{\alpha+\beta}, Y_{\alpha+\beta}, H_\alpha+H_\beta \rangle \inplus \langle X_\alpha, Y_\beta \rangle$  & $W_{5,1}=\langle K_1, K_2, L_3, P_1, P_2\rangle$
 \\
\hline
5 & $\u_{5,3}=\langle X_{\alpha+\beta}, Y_{\alpha+\beta}, H_\alpha+H_\beta \rangle \inplus \langle X_\beta, Y_\alpha \rangle$  & $W_{5,2}=\big\langle K_1-\frac{1}{2}D, P_2, R_2, K_2-L_3, P_1\big\rangle$
 \\
\hline
6 & $\u_{6,1}=\langle X_{\alpha+\beta}, Y_{\alpha+\beta}, H_\alpha+H_\beta \rangle \inplus \langle X_\alpha, Y_\beta, H_\alpha-H_\beta \rangle$  & $W_{6,1}=\langle K_1, K_2, L_3, D, P_1, P_2\rangle$
 \\
\hline
6 & $\u_{6,2}=\langle X_{\alpha+\beta}, Y_{\alpha+\beta}, H_\alpha+H_\beta \rangle \inplus \langle X_\beta, Y_\alpha, H_\alpha-H_\beta \rangle$  &  $W_{6,2}=\big\langle K_1-\frac{1}{2}D, P_2, R_2, K_1+\frac{1}{6}D, K_2-L_3, P_1\big\rangle$
 \\
\hline
\end{tabular}}
\end{table}

\section{The real subalgebras of $\su$}

The generalized special unitary algebra \(\mathfrak{su}(2,1)\) is the real Lie algebra of traceless \(3 \times 3\) complex matrices that preserve the Hermitian form \(N\) from Eq. \eqref{hermitian}.
It is eight-dimensional with basis
\begin{equation}
\arraycolsep=1.5pt\def\arraystretch{1.48}
\begin{array}{llllllllll}
   A_1& = \begin{pmatrix} i&0&0\\0&-i&0\\0&0&0 \end{pmatrix}, 
&A_2 &= \begin{pmatrix} 0&0&0\\0&i&0\\0&0&-i\end{pmatrix}, 
     &A_3 &= \begin{pmatrix} 0&1&0\\-1&0&0\\0&0&0\end{pmatrix},\\[2ex]
      A_4  &= \begin{pmatrix} 0&i&0\\i&0&0\\0&0&0\end{pmatrix},
   &A_5 &= \begin{pmatrix} 0&0&1\\0&0&0\\1&0&0 \end{pmatrix}, 
  &A_6 &= \begin{pmatrix} 0&0&i\\0&0&0\\-i&0&0\end{pmatrix}, \\[2ex]
     A_7 &= \begin{pmatrix} 0&0&0\\0&0&1\\0&1&0\end{pmatrix},
      &A_8 &= \begin{pmatrix} 0&0&0\\0&0&i\\0&-i&0\end{pmatrix}.      
\end{array}
\end{equation}

The generalized special unitary algebra $\mathfrak{su}(2, 1)$ is a non-compact real form of the complex special linear algebra $\mathfrak{sl}_3(\mathbb{C})$.
The Lie group corresponding to $\mathfrak{su}(2, 1)$ is generalized special unitary group  
$\mathrm{SU}(2,1)$. In this section we use conjugations of $\SL_3(\C)$ and $\ssl_3(\C)$ that we denote by the same symbol $\sigma$. For $g\in \SL_3(\C)$ we
have $\sigma(g) = N\bar{g}^{-t}N^{-1}$ and for $x\in \ssl_3(\C)$ we have $\sigma(x) = -N\bar{x}^tN^{-1}$. Then $\mathrm{SU}(2,1)$, $\su$ are the sets of elements of
respectively $\SL_3(\C)$ and $\ssl_3(\C)$ that are fixed under $\sigma$.

In Theorems \ref{onesu21t} through \ref{levisu21t}, we classify the real subalgebras of $\mathfrak{su}(2,1)$, up to conjugation in $\mathrm{SU}(2,1)$.
The classification is presented and summarized in Tables \ref{onesu21}  through \ref{levisu21}. In the tables, subalgebras separated by a horizontal dashed line are conjugate in $\mathrm{SL}_3(\mathbb{C})$ (but not under conjugation in $\mathrm{SU}(2,1)$). Further,  $\u \sim \mathfrak{v}$   indicates that subalgebras  $\u$ and  $\mathfrak{v}$ are conjugate with respect to $\mathrm{SU}(2,1)$.
Conjugacy conditions for cases with parameters are established by direct computational analysis.

The tables are organized according to dimension and structure. They are structurally organized into  solvable, semisimple, and Levi decomposable subalgebras.

In the theorems of this section, we use the  basis of $\mathfrak{sl}_3(\mathbb{C})$ employed in  the classification of subalgebras of $\mathfrak{sl}_3(\mathbb{C})$ from \cite{dr16a, dr16b, dr18}, namely the basis 
$\{H_\alpha, H_\beta, X_\alpha, X_\beta,X_{\alpha+\beta}, $$Y_\alpha, $$ Y_\beta,$$Y_{\alpha+\beta}\}$ 
defined in Eq. \eqref{sl3basiss}. This is also a basis of the real Lie algebra $\mathfrak{sl}_3(\mathbb{R})$. 

\begin{rmk}\label{rem:H1su}
A computation with the algorithm of \cite{bg} shows that the first Galois
cohomology set $H^1 (\SL_3(\C),\sigma)$ consists of the classes of
$\diag(1,1,1)$ and $\diag(-1,-1,1)$. In particular, the first Galois cohomology
is not trivial. If we consider a subalgebra $\v$ and the Galois cohomology
$H^1(\mathcal{Z}_G(\v),\sigma)$ then it may happen that the latter set contains
classes $[c]$ such that $c$ is equivalent to $\diag(-1,-1,1)$ in
$Z^1(\SL_3(\C),\sigma)$. Such elements
do not correspond to real orbits (see Theorem \ref{thm:galco}). 
\end{rmk}

\begin{theorem}\label{onesu21t}
A complete list of inequivalent, one-dimensional real subalgebras of $\su$, up to conjugation in $\mathrm{SU}(2,1)$, is given in Table
\ref{onesu21}.
\end{theorem}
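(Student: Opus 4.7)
The plan is to follow the procedure established in Section 4 and applied in Section 5, iterating over the one-dimensional subalgebras of $\mathfrak{sl}_3(\mathbb{C})$ listed in Table \ref{table1sl3C}. The first step is to determine, for each complex orbit representative $\u$, whether the orbit contains a real point with respect to the conjugation $\sigma(x) = -N\bar{x}^t N^{-1}$. By the unnumbered lemma immediately preceding Theorem \ref{th:1}, this necessitates that $\sigma(\u)$ lie in the $\SL_3(\C)$-orbit of $\u$. When it does, Theorem \ref{th:1} supplies a cocycle $g_0$ with $\sigma(\u) = g_0^{-1}\cdot \u$, and we then seek $g_1\in \SL_3(\C)$ with $g_1^{-1}\sigma(g_1)=g_0$ so that $g_1\cdot \u$ becomes real; I expect these $g_0$ and $g_1$ to be obtained by the same computer-aided approach described in Section 4.

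For each real-point representative $\v$ thus obtained, the $\SU$-orbits inside the $\SL_3(\C)$-orbit are then parameterized, via Theorem \ref{thm:galco}, by $\ker i_* \subset H^1(\mathcal{Z}_G(\v),\sigma)$. The stabilizers of the one-dimensional subalgebras are recorded in Table \ref{tab:stab1} and consist of products of tori, unipotent groups, copies of $\GL(2,\C)$, and finite component groups. Thus a combination of Lemmas \ref{lem:T1}, \ref{lem:T2}, \ref{lem:sansuc} and Proposition \ref{prop:compodd}, together with the fact that $H^1\GL(n,\C)$ is trivial, should reduce the computation to that of finite groups. The new ingredient relative to the $\mathfrak{sl}_3(\R)$ case, flagged in Remark \ref{rem:H1su}, is that $H^1(\SL_3(\C),\sigma)$ is itself nontrivial, containing $[\diag(-1,-1,1)]$: each cocycle $c\in H^1\mathcal{Z}_G(\v)$ must therefore be tested for equivalence to $1$ in $\SL_3(\C)$, and only those lying in $\ker i_*$ are retained.

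For the parameter family $\u^a=\langle H_\alpha+aH_\beta\rangle$, the analysis should mirror the $\mathfrak{sl}_3(\R)$ treatment: $\sigma(\u^a) = \u^{a'}$ for some $a'$ that I would compute explicitly from the action of $\sigma$ on the diagonal Cartan, and this $a'$ must be matched against the equivalences $a\sim 1-a,\ a^{-1},\ \tfrac{1}{1-a},\ \tfrac{a}{a-1},\ \tfrac{a-1}{a}$ from Table \ref{table1sl3C} to single out those values of $a$ whose orbit carries a real point. The remaining representatives $\langle X_\alpha\rangle$, $\langle X_\alpha+X_\beta\rangle$, $\langle X_\alpha+H_\alpha+2H_\beta\rangle$ are nilpotent or mixed, each with a mostly-connected stabilizer, and should be handled by explicit construction of $g_0$, $g_1$, and a real basis, followed by the stabilizer cohomology computation.

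I expect the main obstacle to lie in the combined bookkeeping of two filters: first, extracting from each $H^1\mathcal{Z}_G(\v)$ the subset $\ker i_*$ by explicitly exhibiting, for each candidate cocycle, either an equivalence to $\diag(1,1,1)$ or an equivalence to $\diag(-1,-1,1)$ in $\SL_3(\C)$; and second, disentangling the $\SU$-equivalences among the resulting one-parameter families, which will generally differ from the $\SL_3(\R)$-equivalences established in Theorem \ref{onedsl3t} because $\SU$-conjugation is more restrictive and involves the indefinite Hermitian form $N$. Once these two issues are resolved for each row of Table \ref{table1sl3C}, the list populating Table \ref{onesu21} will be assembled.
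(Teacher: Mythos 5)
Your proposal follows essentially the same route as the paper's proof: for each row of Table \ref{table1sl3C} you test for real points via $\sigma(\u)\in\mathcal{O}$, produce $g_0$ and $g_1$ with $g_1^{-1}\sigma(g_1)=g_0$, compute $H^1$ of the (conjugated) stabilizer using Lemmas \ref{lem:T1}, \ref{lem:T2}, \ref{lem:sansuc} and Proposition \ref{prop:compodd}, discard cocycles equivalent to $\diag(-1,-1,1)$ in $H^1(\SL_3(\C),\sigma)$ as in Remark \ref{rem:H1su}, and for $\langle H_\alpha+\lambda H_\beta\rangle$ match $\bar\lambda$ against the six-element equivalence class to isolate the four sub-cases, finishing by identifying redundant one-parameter families under $\SU$-conjugacy. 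This is exactly the paper's argument in outline, differing only in that the explicit matrices, real bases, and case-by-case stabilizer computations remain to be carried out.
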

\begin{proof} 
For each one-dimensional subalgebra representative of $\mathfrak{sl}_3(\mathbb{C})$ from Table \ref{table1sl3C}, we determine whether its $\mathrm{SL}_3(\mathbb{C})$-orbit contains a real point. If not, we disregard the subalgebra. If so, we use Galois cohomology to determine the $\mathrm{SU}(2,1)$-orbits.

\noindent $\underline{\v=\langle X_\alpha\rangle}$:  We have $X_\alpha= \tfrac{1}{2}(A_3-i A_4) $ so that $\v$ is not real. Let
\begin{equation}\label{eq:1}
  h = \begin{pmatrix} 0&-1&0\\-1&0&0\\0&0&-1\end{pmatrix}.
\end{equation}
Then $h\cdot \v = \sigma(\v)$. Furthermore, $h$ is a cocycle and with 
\begin{equation}\label{eq:2}
g = \tfrac{1}{2}\begin{pmatrix} -1&1&\sqrt{2}\\ -1&1&-\sqrt{2}\\
  -\sqrt{2}&-\sqrt{2} & 0 \end{pmatrix}
\end{equation}
we have $g^{-1}\sigma(g) = h$. Hence $g\cdot \v$ is real.
In fact, $g\cdot \v$ is spanned by 
$x=gX_\alpha g^{-1}=\tfrac{1}{4}iA_1+\tfrac{1}{2}iA_2+\tfrac{1}{4}iA_4-\tfrac{1}{4} \sqrt{2} iA_6
-\tfrac{1}{4}\sqrt{2} iA_8$ so that $\sigma(x) = -x$. A real basis
element is $ix$, which is
\begin{equation}-\tfrac{1}{4}A_1-\tfrac{1}{2}A_2-\tfrac{1}{4}A_4+\tfrac{1}{4} \sqrt{2} A_6
+\tfrac{1}{4}\sqrt{2} A_8.\end{equation}
We have that $\mathcal{Z}_G(\v)=T\ltimes U$ where $U$ is unipotent and $T$ is a torus consisting of elements
$\diag(a,b,c)$ with $c=(ab)^{-1}$ (see Table \ref{tab:stab1}). The stabilizer of $g\cdot \v$ is $g\mathcal{Z}_G(\v)g^{-1} = S\ltimes U'$
where $S=gTg^{-1}$ and $U'=gUg^{-1}$. We have that $S$ consists of the elements $S(a,b)$ where
$$S(a,b) = \tfrac{1}{4}\begin{pmatrix} a+b+2c & a+b-2c & \sqrt{2}(a-b)\\
a+b-2c & a+b+2c & \sqrt{2}(a-b) \\ \sqrt{2}(a-b) & \sqrt{2}(a-b) & 2(a+b)\end{pmatrix},$$
where $c=(ab)^{-1}$. A calculation shows that $\sigma(S(a,b)) = S(\bar b^{-1}, \bar a^{-1})$.
By Lemma \ref{lem:T2} this implies that the first Galois cohomology
set of $S$ is trivial. By Lemma \ref{lem:sansuc} we see that the first Galois cohomology set 
of $gHg^{-1}$ is trivial as well. Hence, up to $\SU$-conjugacy, there is
one subalgebra in $\su$ which is $G$-conjugate to $\v$.  Denote 
\begin{equation}
    \v_{1,1}=\big\langle -A_1-2A_2-A_4+\sqrt{2} A_6+\sqrt{2} A_8 \big\rangle.
\end{equation}

\noindent $\underline{\v=\langle X_\alpha+X_\beta\rangle}$:  We have $X_\alpha+
X_\beta= \tfrac{1}{2}(A_3-i A_4)+\tfrac{1}{2}(A_7-iA_8) $, and
$\sigma(X_\alpha) = \tfrac{1}{2}(A_3+iA_4)+\tfrac{1}{2}(A_7+iA_8)$ so that $\v$ is
not real. Let
\begin{equation}\label{eq:eq4}
g_0 = \begin{pmatrix} 0&0&1\\0&1&0\\-1&0&0\end{pmatrix}.
\end{equation}
Then $g_0^{-1}(X_\alpha+X_\beta) g_0 = \sigma(X_\alpha+X_\beta)$ so that
$g_0^{-1} \cdot \v = \sigma(\v)$. Also $g_0\sigma(g_0)=1$ so that $g_0$ is a
cocycle. Let
\begin{equation}\label{eq:eq5}
h=\begin{pmatrix} \tfrac{1}{2}(1-i) & 0 & \tfrac{1}{2}(-1+i)\\
0&i&0\\ \tfrac{1}{2}(1-i) & 0 & \tfrac{1}{2}(1-i) \end{pmatrix}.
\end{equation}
Then $h\in \SL(3,\C)$ is such that $h^{-1}\sigma(h) = g_0$. Furthermore
$h\cdot \v$ is spanned by $A_3+A_4+A_7-A_8$ so it is indeed real.
The identity component of $\mathcal{Z}_G(\v)$ is of the form $T\ltimes U$ where $T$ is a
torus consisting of the elements $T(t) = \diag(t,1,t^{-1})$ and $U$ is
unipotent (Table \ref{tab:stab1}). The stabilizer of $h\cdot \v$ is $h\mathcal{Z}_G(\v)h^{-1}$. So its identity
component is of the form $S\ltimes U'$ where $S$ is a torus consisting of
the elements $S(t) = hT(t)h^{-1}$. A calculation shows that
\begin{equation}S(t) =\begin{pmatrix} \tfrac{t+t^{-1}}{2} & 0 & \tfrac{t-t^{-1}}{2}\\[2pt]
0&1&0 \\[2pt] \tfrac{t-t^{-1}}{2} & 0 & \tfrac{t+t^{-1}}{2} \end{pmatrix}.\end{equation}
Another calculation shows that $\sigma(S(t)) = S(\bar t)$. By Lemma
\ref{lem:T1}(1) 
the Galois cohomology of $S$ is trivial. By Lemma \ref{lem:sansuc} the Galois
cohomology of $h\mathcal{Z}_G(\v)^\circ h^{-1}$ is trivial as well. As the component group is
of order 3, Proposition \ref{prop:compodd} shows that the Galois cohomology of $h\mathcal{Z}_G(\v)h^{-1}$ is trivial
as well. So here we get one real algebra.
 Denote 
\begin{equation}
    \v_{1,2}=\big\langle A_3+A_4+A_7-A_8 \big\rangle.
\end{equation}

\noindent $\underline{\v=\langle X_\alpha+H_\alpha+2H_\beta\rangle}$:  We have
$a=X_\alpha+H_\alpha+2H_\beta= \tfrac{1}{2}(A_3-i A_4)-iA_1-2iA_2$, so that $\v$ is
not real. Let $h$ be as in \eqref{eq:1}. Then $h$ is a cocycle and it
maps $a$ to $-\sigma(a)$. We have $h=g^{-1}\sigma(g)$ with $g$ as in \eqref{eq:2}
and
\begin{equation}g\cdot a = \tfrac{1}{4}( 3iA_1+6iA_2-5iA_4-i\sqrt{2}A_6-i\sqrt{2}A_8).\end{equation}
So $g\cdot \v$ is real and spanned by $x=3A_1+6A_2-5A_4-\sqrt{2}A_6-\sqrt{2}A_8$.
The stabilizer $\mathcal{Z}_G(\v)$  is connected and equal to $T\ltimes U$ with $T$ being a torus consisting of
elements $T(t) = \diag(t,t,t^{-1})$ and $U$ is unipotent
(Table \ref{tab:stab1}). The stabilizer of
$g\cdot \v$ is $g\mathcal{Z}_G(\v)g^{-1}$. It is equal to $S\ltimes U'$,
where $S$ is a 1-dimensional torus.  We have
\begin{equation}S(t)=gT(t)g^{-1} = \begin{pmatrix} \tfrac{t+t^{-2}}{2} & \tfrac{t-t^{-2}}{2}& 0\\[2pt]
  \tfrac{t-t^{-2}}{2}& \tfrac{t+t^{-2}}{2}& 0\\[2pt] 0&0&t \end{pmatrix}.\end{equation}
A calculations shows that $\sigma(S(t)) = S(\bar t^{-1})$. By Lemma
\ref{lem:T1}(2) this implies that $H^1 S$ has
two elements with representatives $S(1)$, $S(-1)$. It happens that $S(-1)=h$.
So $S(-1)=g^{-1}\sigma(g)$. So the second real algebra in the complex orbit
is $g\cdot(g\cdot\v)$. It is spanned by $A_1+2A_2+7A_4-\sqrt{2}A_6+\sqrt{2}A_8$.
 Denote 
\begin{equation}
\begin{array}{lll}
 \v_{1,3}&=&\big\langle 3A_1+6A_2-5A_4-\sqrt{2}A_6-\sqrt{2}A_8 \big\rangle ~\text{and}\\[1.2ex]
    \v_{1,4}&=&\big\langle A_1+2A_2+7A_4-\sqrt{2}A_6+\sqrt{2}A_8 \big\rangle.
    \end{array}
\end{equation}

\noindent $\underline{\v_\lambda=\langle H_\lambda\rangle}$ with $H_\lambda =
H_\alpha+\lambda H_\beta$ and $\lambda\in \C$. We have that $\v_\lambda
\sim \v_\mu$ if and only if $\mu \in C_\lambda= \Big\{\lambda,\tfrac{1}{\lambda},1-\lambda,
\tfrac{1}{1-\lambda},\tfrac{\lambda}{\lambda-1},\tfrac{\lambda-1}{\lambda}\Big\}$.
The algebra $\v_\lambda$ is spanned by $A_1+\lambda A_2$, 
 and we denote
\begin{equation}
    \v_{1,5}^\lambda=\big\langle A_1+\lambda A_2\big\rangle.
\end{equation} (Furthermore, it is straightforward to establish that $\v_{1,5}^\lambda \sim \v_{1,5}^\eta$ if and only if $\eta=\lambda$ or $\eta =\frac{\lambda}{\lambda-1}$. It follows that   $\v_{1,5}^\lambda$ represents a non-equivalent subalgebra for each $\lambda\in [0, 2]$, and each subalgebra is included for $\lambda \in [0,2]$.)
Hence
$\sigma(\v_\lambda) = \v_{\bar\lambda}$. So $\sigma(\v_\lambda)$ is $\SL(3,\C)$-conjugate
to $\v_\lambda$ if and only if $\bar \lambda \in C_\lambda$. So we have the
following cases:
\begin{enumerate}
\addtolength{\itemsep}{3pt}
\item $\bar\lambda = \lambda$: this is the same as $\lambda\in \R$. 
\item $\bar \lambda = \tfrac{1}{\lambda}$: this is equivalent to $\lambda\in\C$
  with $|\lambda|=1$.
\item $\bar\lambda = 1-\lambda$: this amounts to $\lambda = \tfrac{1}{2}+iy$
  with $y\in \R$.
\item $\bar\lambda = \tfrac{\lambda}{\lambda-1}$: this is equivalent to
  $\lambda=x+iy$, $x,y\in \R$ with $x^2-2x+y^2=0$.
\item $\bar\lambda = \tfrac{1}{1-\lambda}$ or $\bar\lambda =
  \tfrac{\lambda-1}{\lambda}$. There are no $\lambda\in\C$ satisfying either
  of these conditions.
\end{enumerate}
So we have to deal with four cases. In the first case we have $\lambda\in\R$.
This again splits into sub-cases, because if  $\lambda = 0,1,-1,
\tfrac{1}{2},2$ the stabilizer of $\v_\lambda$ is different from the generic
case. However, note that the algebras with $\lambda=0,1$ are conjugate, and
the same holds for the algebras with $\lambda=-1,\tfrac{1}{2},2$. So we
distinguish three cases: $\lambda$ generic, $\lambda=0$, $\lambda=2$.
First suppose that $\lambda \neq
0,1,-1,\tfrac{1}{2},2$. Then the stabilizer of $\v_\lambda$ consists of
$T(a,b)=\diag(a,b,\tfrac{1}{ab})$. We have $\sigma(T(a,b)) = T(\bar{a}^{-1},
\bar{b}^{-1})$. It follows that $T$ is the direct product of two tori considered
in Lemma \ref{lem:T1}(2). So from that lemma it follows that
the first Galois cohomology set consists of the
classes of $T(1,1)$, $T(-1,1)$, $T(1,-1)$, $T(-1,-1)$. The first element is
the identity, hence corresponds to the subalgebra $\v_\lambda$ itself. For the
second element set
\begin{equation}g_1 = \begin{pmatrix} 0 & \tfrac{1}{2}\sqrt{2} & \tfrac{1}{2}\sqrt{2}\\[2pt]
  0 & \tfrac{1}{2}\sqrt{2}& -\tfrac{1}{2}\sqrt{2} \\[2pt] -1 & 0 & 0\end{pmatrix}.\end{equation}
Then $g_1^{-1} \sigma(g_1) = T(-1,1)$. We have that $g_1\cdot \v_\lambda$ is
spanned by $A_1 +2A_2-(2\lambda-1)A_4$, and we denote

\begin{equation}
 \s^\lambda_1 =\langle A_1 +2A_2-(2\lambda-1)A_4\rangle, ~ \lambda \in \mathbb{R}\setminus \{-1, 1, 2  \}.
\end{equation}
It is straightforward to show that
$\v_{1,5}^\lambda \sim 
\s^{\frac{\lambda-1}{\lambda}}_1$.
Thus, the  case of $\s^\lambda_1$ is redundant.

 Concerning the
third element set
\begin{equation}\label{eq:g2}
g_2 = \begin{pmatrix} \tfrac{1}{2}\sqrt{2} & 0 & -\tfrac{1}{2}\sqrt{2}\\[2pt]
  -\tfrac{1}{2}\sqrt{2} & 0 & -\tfrac{1}{2}\sqrt{2}\\[2pt]0&1&0\end{pmatrix}.\end{equation}
Then $g_2^{-1}\sigma(g_2) = T(1,-1)$. Furthermore, $g_2\cdot \v_\lambda$ is
spanned by $(1-\lambda)A_1-2(\lambda-1)A_2-(1+\lambda)A_4$.  Set
\begin{equation}
    \s^\lambda_2 =\langle (1-\lambda)A_1-2(\lambda-1)A_2-(1+\lambda)A_4 \rangle, ~\lambda \in \mathbb{R} \setminus \bigg\{0, \frac{1}{2}\bigg\}. 
\end{equation}
It is straightforward to show $\v^\lambda_{1,5} \sim \s^{-1/(\lambda-1)}_2$.  
 Hence, the subalgebra $\s^\lambda_2$ is redundant.

The cocycle $T(-1,-1)$ is not
equivalent to the trivial cocycle, and hence does not correspond to a real
subalgebra (see Remark \ref{rem:H1su}). 

Now suppose $\lambda=0$. In this case the stabilizer is $T\ltimes C$ (Table \ref{tab:stab1}), with 
$T$ as before (i.e., consisting of $T(a,b) = \diag(a,b,(ab)^{-1})$ for $a,b\in \C^\times$) and where $C$ is the component group consisting of the identity
and
\begin{equation}\label{udefined}
u = \begin{pmatrix} 0&1&0\\ 1&0&0\\ 0&0&-1\end{pmatrix}.\end{equation}
We have $\sigma(u)=u$ and $u^2=1$ so in particular $u$ is a cocycle.

We now compute the first Galois cohomology of the stabilizer using the procedure outlined at the end of Section \ref{sec:H1facts}. 
The set $H^1 (T,\sigma)$ is given above. We consider the right action of $C^\sigma=C$
on $H^1 (T,\sigma)$ defined by $[g]\cdot c = [c^{-1}g\sigma(c)]$.
We have $[ T(-1,1) ]\cdot u = [T(1,-1)]$ and $u$ leaves
$[T(1,1)]$, $[T(-1,-1)]$ invariant. Let $j : Z_G(\v_0) \to C$
denote the projection and $j_* : H^1 Z_G(\v_0) \to H^1 C$ the induced map
(note that $H^1 C = C$). It follows that $j_*^{-1}([1]) = \{ [T(1,1)], [T(-1,1)],
[T(-1,-1)]\}$. Now we consider the conjugation $\tau$ given by $\tau(g) = u\sigma(g)u$. We have
$\tau( T(a,b) ) = T(\bar b^{-1},\bar a^{-1})$. The Galois cohomology
$H^1 (T,\tau)$ is trivial (Lemma \ref{lem:T2}).
  Hence $j_*^{-1}([u]) = \{[u]\}$. Furthermore,
$H^1(\mathcal{Z}_G(\v_0),\sigma) = j_*^{-1}([1]) \cup j_*^{-1}([u])$. The
elements of $j_*^{-1}([1])$ yield the subalgebras $\v_0$, $g_1\cdot \v_0$.
For the cocycle $u$ set
\begin{equation}\label{eq:g3}
g_3 = \begin{pmatrix} -\tfrac{1}{2} & -\tfrac{1}{2} & -\tfrac{1}{2}\sqrt{2}\\[2pt]
  \tfrac{1}{2} & \tfrac{1}{2} & -\tfrac{1}{2}\sqrt{2}\\[2pt] \tfrac{1}{2}\sqrt{2}
  & -\tfrac{1}{2}\sqrt{2} & 0 \end{pmatrix}.
  \end{equation}
Then $g_3^{-1}\sigma(g_3) = u$ and $g_3\cdot \v_0$ is spanned by $A_5-A_7$,  and we denote
\begin{equation}
    \v_{1,6} =\langle A_5-A_7\rangle.
\end{equation}

Let $\lambda=2$. From Table \ref{tab:stab1} we see that the stabilizer is isomorphic to $\GL(2,\C)$ 
with conjugation $A\mapsto \overline{A}^{-t}$. A calculation with the algorithm from \cite{bg} shows that the first Galois cohomology
of this group consists of the classes of $\diag(1,1)$, $\diag(1,-1)$,
$\diag(-1,-1)$. Hence $H^1 \mathcal{Z}_G(\v_{2})$ consists of the classes of
$T(1,1)$, $T(1,-1)$, $T(-1,-1)$, with $T(a,b) = \diag(a,b,(ab)^{-1})$ as before. 
We get the subalgebras $\v_2$, $g_2\cdot \v_2$.

Now consider the second case where $\lambda\in \C$ is such that $|\lambda|=1$.
We assume $\lambda\not\in\R$. This splits into two cases. In the first one
we have $\lambda \neq \tfrac{1\pm i\sqrt{3}}{2}$. Let $g_0$, $h$ be as in \eqref{eq:eq4}, 
\eqref{eq:eq5}. 
Then $g_0 H_\lambda g_0^{-1} = -\lambda H_{\bar\lambda}$ (as $\bar\lambda =
\tfrac{1}{\lambda}$). Furthermore $g_0$ is a cocycle and $g_0 =
h^{-1}\sigma(h)$. Writing $\lambda = x+iy$ with $x^2+y^2=1$ and $y\neq 0$
(as $\lambda\not\in\R$) we see that $h\cdot \u_\lambda$ is spanned by
$A_1-A_2+\tfrac{y}{x-1}A_5$. The stabilizer of this algebra consists of the
matrices $h \diag(a,b,c) h^{-1}$ where $abc=1$.
A computation shows that it also consists of
\begin{equation}\label{eq:eqT}
T(a,c) = \begin{pmatrix} \tfrac{a+c}{2} & 0 & \tfrac{a-c}{2} \\[2pt]
  0&b&0\\[2pt] \tfrac{a-c}{2} & 0 & \tfrac{a+c}{2} \end{pmatrix}, \text{ with }
a\hspace{0.1em}b\hspace{0.1em}c=1.
\end{equation}
We have $\sigma( T(a,c) ) = T(\bar c^{-1},\bar a^{-1})$. Hence the stabilizer
has trivial Galois cohomology (Lemma \ref{lem:T2}). 
If $\lambda = \tfrac{1\pm i\sqrt{3}}{2}$ then we have the same
argument. In this case the identity component of the stabilizer is the same, hence its Galois
cohomology is trivial. The component group is of order 3, so the cohomology of the whole group is
trivial (Proposition \ref{prop:compodd}). 
We conclude that for $|\lambda|=1$ we get one series of subalgebras.  It is generated by $A_1-A_2+\frac{y}{x-1} A_5$, where $x, y\in \mathbb{R}$, $y\neq 0$, and $x^2+y^2=1$. This allows us to define
\begin{equation}
    \v_{1,7}^\lambda = \langle A_1-A_2+\lambda A_5 \rangle, ~\lambda \in \mathbb{R}\setminus \{0\}.
\end{equation}
Further, it is straightforward to show $\v^\lambda_{1,7}\sim \v^\eta_{1,7}$ if and only if $\eta =\pm \lambda$. It follows that   $\v_{1,7}^\lambda$ represents a non-equivalent subalgebra for each $\lambda\in (0, \infty)$, and each subalgebra is included for $\lambda \in (0, \infty)$.

In the third case, we have $\lambda = \tfrac{1}{2}+iy$ where we assume $y\neq \pm \tfrac{\sqrt{3}}{2}$. Set
\begin{equation}g_0 = \begin{pmatrix} 1&0&0\\0&0&i\\0&i&0\end{pmatrix}.\end{equation}
Then $g_0H_\lambda g_0^{-1} = H_{\bar\lambda}$ and $g_0$ is a cocycle. We have that $g_0 = 
g^{-1}\sigma(g)$  with 
\begin{equation}g=\begin{pmatrix} i&0&0\\[3pt] 0&\tfrac{1-i}{2} & -\tfrac{1+i}{2}\\[3pt] 0&-\tfrac{1+i}{2} & \tfrac{1-i}{2}\end{pmatrix}.\end{equation}
The subalgebra spanned by $gH_\lambda g^{-1}$ is also spanned by $u=A_1+\tfrac{1}{2} A_2+yA_8$. 
The stabilizer of $\v_\lambda$ in $\SL(3,\C)$ consists of $T(b,c)=g\,\diag(a,b,c)\,g^{-1}$ with $a=(bc)^{-1}$. Hence
\begin{equation}T(b,c) = \begin{pmatrix} a&0&0 \\[3pt] 0&\frac{b+c}{2} & i\tfrac{b-c}{2}\\[3pt]
0&-i\tfrac{b-c}{2} & \tfrac{b+c}{2}\end{pmatrix} \text{ with } ~a = (bc)^{-1}.\end{equation}
We have that $\sigma(T(b,c)) = T(\bar{c}^{-1},\bar{b}^{-1})$. Hence the Galois cohomology of this 
group is trivial (Lemma \ref{lem:T2}) and we get one class of subalgebras.
 Set 
\begin{equation}
    \s_3^\lambda = \bigg\langle A_1+\frac{1}{2} A_2+ \lambda A_8\bigg\rangle, ~ \lambda \in \mathbb{R}\setminus \{0\}.
\end{equation}
We have that $\v_{1,7}^\lambda \sim \s_3^{\lambda/2}$, hence this subalgebra is redundant.

In the fourth case we have $\lambda = x+iy$ with $x^2-2x+y^2=0$; we may assume $y\neq 0$ hence also 
$x\neq 0$. With 
\begin{equation}\label{eq:eq3}
g_0= \begin{pmatrix} 0&1&0\\1&0&0\\0&0&-1\end{pmatrix} \text{ and }~ g = \begin{pmatrix}
     -\tfrac{1}{2} & -\tfrac{1}{2} & -\tfrac{1}{2}\sqrt{2}\\[3pt]  \tfrac{1}{2} & \tfrac{1}{2} & -\tfrac{1}{2}\sqrt{2}\\[3pt]  \tfrac{1}{2}\sqrt{2} & -\tfrac{1}{2} \sqrt{2}& 0 
\end{pmatrix}
\end{equation}
we have $g_0H_\lambda g_0^{-1} = (\lambda-1)H_{\bar\lambda}$, $g_0$ is a cocycle and $g^{-1}\sigma(g) = g_0$. A short computation shows that the algebra spanned by $gH_\lambda g^{-1}$ is also spanned
by $A_1+2A_2+3A_4+\tfrac{y}{x}\sqrt{2}(A_5-A_7)$. The stabilizer of the latter algebra consists of 
$T(a,b)=g \,\diag(a,b,c)\,g^{-1}$ and we have
\begin{equation}T(a,b) = \begin{pmatrix} \tfrac{a+b+2c}{4} & \tfrac{-a-b+2c}{4} & \tfrac{\sqrt{2}(-a+b)}{4}\\
\tfrac{a-b+2c}{4} & \tfrac{a+b+2c}{4} & \tfrac{\sqrt{2}(a-b)}{4}\\
\tfrac{\sqrt{2}(-a+b)}{4}  & \tfrac{\sqrt{2}(a-b)}{4} & \tfrac{a+b}{4} \end{pmatrix}
\text{ with } c = (ab)^{-1}.\end{equation}
We have $\sigma(T(a,b)) = T(\bar{b}^{-1},\bar{a}^{-1})$. Hence the Galois cohomology of this group is trivial (Lemma \ref{lem:T2}). So we again get one class of algebras,  which is spanned
by $A_1+2A_2+3A_4+\tfrac{y}{x}\sqrt{2}(A_5-A_7)$ for 
$x, y\in \mathbb{R}$, $y\neq 0$, and $x^2+y^2=1$. 
This yields the subalgebra 
\begin{equation}
\s_{4}^\lambda=\big\langle A_1+2A_2+3A_4+\lambda\sqrt{2}(A_5-A_7) \big\rangle, ~ \lambda \in (-1,1)\setminus \{0\}.
\end{equation}
Then, $\s_4^\lambda \sim \v_{1,7}^\lambda$ under conjugation by
\begin{equation}
    \begin{pmatrix}
\frac{\sqrt{2}}{2} & -\frac{\sqrt{2}}{2}  & 0 \\[3pt]
-\frac{\sqrt{2}}{2}  & -\frac{\sqrt{2}}{2}  & 0\\[3pt]
0&0&-1
\end{pmatrix} \in \SU.
\end{equation}
Hence, the subalgebra $\s_4^\lambda$ is redundant.
With the given constraints on the parameters, one can also show directly that 
$\v_{1,5}^\lambda$, $\v_{1,6}$, and $\v_{1,7}^\lambda$
are pairwise inequivalent.
\end{proof}

\begin{theorem}\label{twosu21t}
A complete list of inequivalent two-dimensional real subalgebras of $\su$, up to conjugation in $\mathrm{SU}(2,1)$, is given in Table
\ref{twosu21}.
\end{theorem}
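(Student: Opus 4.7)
The plan is to follow the cohomological template established in Theorems \ref{onedsl3t}, \ref{twodsl3t}, and \ref{onesu21t}: march through each representative $\u$ of a 2-dimensional $\SL_3(\C)$-orbit from Table \ref{table2sl3C}, first decide whether the orbit contains a real point with respect to the conjugation $\sigma(x)=-N\bar{x}^tN^{-1}$, and then use Galois cohomology of the stabilizer (from Table \ref{tab:stab2}) to count $\SU$-orbits inside each surviving $\SL_3(\C)$-orbit, filtering out classes whose cocycles are equivalent in $H^1(\SL_3(\C),\sigma)$ to $\diag(-1,-1,1)$ as flagged in Remark \ref{rem:H1su}.

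First I would compute $\sigma$ on the basis $\{X_\alpha,X_\beta,X_{\alpha+\beta},Y_\alpha,Y_\beta,Y_{\alpha+\beta},H_\alpha,H_\beta\}$, obtaining $\sigma(X_\alpha)=-Y_\alpha$, $\sigma(X_\beta)=Y_\beta$, $\sigma(X_{\alpha+\beta})=Y_{\alpha+\beta}$, $\sigma(H_\gamma)=-H_\gamma$. This lets me check each orbit immediately. Only $\langle H_\alpha,H_\beta\rangle$ is $\sigma$-stable outright, with real basis $\{iH_\alpha,iH_\beta\}=\{A_1,A_2\}$; all the remaining non-parametric representatives (such as $\langle X_\alpha,X_{\alpha+\beta}\rangle$, $\langle X_\alpha,Y_\beta\rangle$, $\langle X_\alpha+X_\beta,H_\alpha+H_\beta\rangle$, etc.) require finding an explicit cocycle $g_0$ with $\sigma(\u)=g_0^{-1}\cdot\u$ and then a trivializer $g_1$ with $g_1^{-1}\sigma(g_1)=g_0$, after which $g_1\cdot\u$ is the desired real point. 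For each I would reuse, whenever possible, the small library of conjugating elements from \eqref{eq:1}, \eqref{eq:2}, \eqref{eq:eq4}, \eqref{eq:eq5}, \eqref{eq:g2}, \eqref{eq:g3}, \eqref{eq:eq3}, adapting $g_1$ to transport a real basis of $\langle X_\alpha\rangle$, $\langle X_\alpha,Y_\beta\rangle$, etc., into $\su$ using the $A_i$ basis.

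For the parametric family $\u^a=\langle X_\alpha,aH_\alpha+(2a+1)H_\beta\rangle$, the computation gives $\sigma(\u^a)=\langle Y_\alpha,\bar a H_\alpha+(2\bar a+1)H_\beta\rangle$. Combined with the equivalence $\u^a\sim\u^b\iff a=b$ from Table \ref{table2sl3C}, this reduces to the algebraic condition that $\sigma(\u^a)$ be $\SL_3(\C)$-conjugate to some $\u^b$. Using the Weyl reflection representative $w_\alpha\in\SL_3(\C)$ to swap $Y_\alpha$ back to a multiple of $X_\alpha$, one computes the resulting $b=b(a)$ and thereby the locus of $a$ admitting real points, which I expect to yield one real subfamily parameterised by a real variable (analogous to the analysis in Theorem \ref{onesu21t}, Case $\v_\lambda$), together with one or two discrete exceptional values where the stabiliser enlarges to $\GL_2(\C)$-type and the Galois cohomology must be recomputed.

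The Galois-cohomological step proceeds case by case: from Table \ref{tab:stab2} each $\mathcal{Z}_G(\u)$ is of the form $T\ltimes U$ or extends such a group by a finite component group, or is isomorphic to $\GL_2(\C)$. After transporting to $g_1\mathcal{Z}_G(\u)g_1^{-1}$, I would identify the explicit action of $\sigma$ on the reductive part, invoke Lemmas \ref{lem:T1}, \ref{lem:T2}, \ref{lem:sansuc}, Proposition \ref{prop:compodd}, and the known $H^1$ of $\GL_2(\C)$ under $A\mapsto\overline{A}^{-t}$ (computed by the algorithm of \cite{bg} in Theorem \ref{onesu21t}), and for each nontrivial cohomology class produce a trivializer $g$ to write down a concrete real representative in the $A_i$ basis. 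The main obstacle, as in Theorem \ref{onesu21t}, will be the bookkeeping for $\u^a$: pinning down the precise real parameter range, verifying pairwise $\SU$-inequivalence of the resulting family by direct computation, and rigorously discarding any cohomology class whose cocycle is equivalent in $H^1(\SL_3(\C),\sigma)$ to $\diag(-1,-1,1)$ so as not to overcount real orbits.
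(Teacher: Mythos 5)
Your overall strategy is the paper's own: test each complex orbit for a real point, then compute the first Galois cohomology of the transported stabilizer, discarding classes that become $[\diag(-1,-1,1)]$ in $H^1(\SL_3(\C),\sigma)$ as in Remark \ref{rem:H1su}. Your computation of $\sigma$ on the root vectors, your treatment of $\langle H_\alpha,H_\beta\rangle$ (real already, stabilizer with six components, two resulting $\SU$-orbits after discarding the non-trivializable class), and your analysis of the parametric family $\u^a$ (forcing $a+\bar a=-1$, i.e.\ $a=-\tfrac{1}{2}+iy$, and producing one real series whose members must then be shown pairwise inequivalent) all agree with the paper's route.

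The genuine gap is in your second paragraph, where you assert that every non-parametric representative other than $\langle H_\alpha,H_\beta\rangle$ --- and you name $\langle X_\alpha,X_{\alpha+\beta}\rangle$ and $\langle X_\alpha,Y_\beta\rangle$ explicitly --- ``requires finding an explicit cocycle $g_0$ \dots after which $g_1\cdot\u$ is the desired real point.'' For four of the nine complex orbits no such $g_0$ exists and there is no real point at all: $\langle X_\alpha,X_{\alpha+\beta}\rangle$, $\langle X_\alpha,Y_\beta\rangle$, $\langle X_\alpha,-H_\alpha+H_\beta+3X_{\alpha+\beta}\rangle$ and $\langle X_\alpha,-2H_\alpha-H_\beta+3Y_\beta\rangle$ all have $\sigma(\v)$ not $\SL_3(\C)$-conjugate to $\v$, so they contribute nothing to Table \ref{twosu21}. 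For instance, $\sigma(\langle X_\alpha,X_{\alpha+\beta}\rangle)=\langle Y_\alpha,Y_{\alpha+\beta}\rangle$; every nonzero element of the first algebra has image equal to the fixed line $\C e_1$, whereas the images of the nonzero elements of the second sweep out a plane, and this dichotomy is conjugation-invariant, so the two cannot be conjugate (the paper checks this by a Gr\"obner basis computation). A similar kernel/image asymmetry disposes of $\langle X_\alpha,Y_\beta\rangle$. As written, your proposal would manufacture spurious real subalgebras from these orbits and overcount the classification; the step ``decide whether the orbit contains a real point'' must be carried out with an actual non-conjugacy proof in the negative cases before any cohomology is computed. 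A minor further point: for the surviving family $\u^a$ with $a=-\tfrac{1}{2}+iy$, the exceptional stabilizers in Table \ref{tab:stab2} occur only at $a=-\tfrac{2}{3},-\tfrac{1}{3}$, which do not lie on the line $\mathrm{Re}(a)=-\tfrac{1}{2}$, so the $\GL_2(\C)$-type degeneration you anticipate never actually arises here.
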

\begin{proof}
Similar to the previous proof, we consider each two-dimensional  subalgebra representative of $\mathfrak{sl}_3(\mathbb{C})$ from Table \ref{table2sl3C}.

\vspace{2mm}

\noindent $\underline{\v=\langle X_\alpha,X_{\alpha+\beta}\rangle}$: 
We have $X_\alpha = \tfrac{1}{2} (A_3-iA_4)$, $X_{\alpha+\beta} = -\tfrac{1}{2}(
A_5-iA_6)$ so that $\sigma(\v)$ is spanned by $\tfrac{1}{2} (A_3+iA_4)$,
$ -\tfrac{1}{2}(A_5+iA_6)$. A Gr\"obner basis computation shows that
there is no $g\in G$ with $g\cdot \v = \sigma(\v)$. Hence the orbit of
$\v$ does not contain real subalgebras.

\vspace{2mm}

\noindent $\underline{\v=\langle X_\alpha+X_\beta,X_{\alpha+\beta}\rangle}$: 
We have $X_\alpha+X_\beta = \tfrac{1}{2}(A_3-iA_4)+\tfrac{1}{2}(A_7-iA_8)$.
Let
\begin{equation} \label{eq:eqg0} g_0 = \begin{pmatrix} 0&0&-1\\ 0&1&0\\ 1&0&0\end{pmatrix}.\end{equation}
Then $g_0\cdot \v = \sigma(\v)$. Furthermore, $g_0\sigma(g_0) =1$, i.e., $g_0$ is a cocycle.
Let
\begin{equation}\label{eq:eqh}h=\begin{pmatrix} \tfrac{1}{2}(1+i) & 0 & \tfrac{1}{2}(1+i)\\
0&-i&0\\ \tfrac{1}{2}(-1-i) & 0 & \tfrac{1}{2}(1+i) \end{pmatrix}\end{equation}
then $h\in G$ and $h^{-1}\sigma(h) = g_0$. Hence $h\cdot \v$ is real, and it is
spanned by $A_3+A_4-A_7+A_8$, $A_1+A_2+A_6$. The identity component of the stabilizer of $\v$ in $G$ is $T\ltimes U$, where $T$ is a 1-dimensional torus and $U$ is the unipotent radical
(see Table \ref{tab:stab2}). 
We have that $T$ consists of the elements $\diag(a,1,a^{-1})$. Furthermore $S=hTh^{-1}$ consists
of 
\begin{equation}S(a) = \begin{pmatrix} \tfrac{a+a^{-1}}{2} & 0 & \tfrac{-a+a^{-1}}{2}\\[2pt]
0&1&0\\[2pt] \tfrac{-a+a^{-1}}{2} & 0 & \tfrac{a+a^{-1}}{2}\end{pmatrix}.\end{equation}
We have $\sigma(S(a)) = S(\bar a)$ so that the Galois cohomology of $S$ is trivial (Lemma \ref{lem:T1}(1)). It follows
that the Galois cohomology of the identity component of the stabilizer of $h\cdot \v$ is trivial.
Since the component group is of order 3, the same follows for the Galois cohomology of
the entire stabilizer (Proposition \ref{prop:compodd}).  Hence up $\SU$-conjugacy
there is one subalgebra of $\su$ that is $G$-conjugate to $\v$.  Denote
\begin{equation}
\v_{2,1}=\langle A_3+A_4-A_7+A_8, ~ A_1+A_2+A_6 \rangle.
\end{equation}

\noindent $\underline{\v=\langle X_\alpha, H_{\alpha}+2H_{\beta}\rangle}$: 
We have $\sigma(X_\alpha)=-Y_\alpha$ and $\sigma(H_\alpha+2H_\beta) = -H_\alpha-2H_\beta$. Let 
$u$ be as in \eqref{udefined}. 
Then $u\cdot \v = \sigma(\v)$. Let $g_3$ be as in \eqref{eq:g3}. Then $g_3^{-1}\sigma(g_3) = u$.
So $g_3\cdot \v$ is real. It is spanned by $A_1+2A_2-A_4+\sqrt{2}A_6-\sqrt{2}A_8$, 
$A_1+2A_2+3A_4$. The stabilizer of $\v$ is connected. Its reductive part is a torus consisting
of elements $T(a,b) = \diag(a,b,(ab)^{-1})$ (Table \ref{tab:stab2}). Write $S(a,b) = g_3T(a,b)g_3^{-1}$. Then
\begin{equation}S(a,b) = \begin{pmatrix} \tfrac{1}{4}a+\tfrac{1}{4}b +\tfrac{1}{2}(ab)^{-1} &
-\tfrac{1}{4}a-\tfrac{1}{4}b +\tfrac{1}{2}(ab)^{-1} & \tfrac{1}{4}\sqrt{2}(-a+b)\\[2pt]
-\tfrac{1}{4}a-\tfrac{1}{4}b +\tfrac{1}{2}(ab)^{-1} & \tfrac{1}{4}a+\tfrac{1}{4}b +\tfrac{1}{2}(ab)^{-1} &
\tfrac{1}{4}\sqrt{2}(a-b)\\[2pt] \tfrac{1}{4}\sqrt{2}(-a+b) & \tfrac{1}{4}\sqrt{2}(a-b) & \tfrac{1}{2}(a+b)\end{pmatrix}.
\end{equation}
A computation shows that $\sigma(S(a,b)) = S(\bar b^{-1}, \bar a^{-1})$. So by
Lemma \ref{lem:T2} the first Galois cohomology
of the reductive part of the stabilizer of $g_3\cdot \v$ is trivial. Therefore
Lemma \ref{lem:sansuc} shows that the first Galois cohomology of the 
entire stabilizer is trivial. So we obtain one real subalgebra in this case.
Denote
\begin{equation}
\v_{2,2} = \langle A_1+2A_2-A_4+\sqrt{2}A_6-\sqrt{2} A_8, ~A_1+2A_2+3A_4\rangle.
\end{equation}

\noindent $\underline{\v=\langle H_\alpha,H_{\beta}\rangle}$:
This algebra is spanned by $A_1,A_2$ so it is real already. We use the procedure outlined at the 
end of Section \ref{sec:H1facts} to compute the Galois cohomology of the stabilizer. 
The stabilizer $N$ has six components
and its identity component $N^\circ =T$ is a 2-dimensional torus consisting of $\diag(a,b,(ab)^{-1})$ (Table \ref{tab:stab2}). Let $C=N/N^\circ$ be the component group. The conjugation $\sigma$ acts trivially
on $C$. Hence $H^1(C,\sigma) = \{ [1], [c] \}$ where $c$ is an element of order 2. For $c$ we can take
the class of $g_0$, with $g_0$ as in \eqref{eq:eq3}. 
Then $g_0$ is a cocycle in $N$. We have that $H^1(T,\sigma)$ has 4 elements. The group $C^\sigma$ acts
on it with two orbits: one containing the identity (which is an orbit of size 3) and the other
containing $\diag(-1,-1,1)$. If we twist the conjugation by $g_0$ then the first Galois cohomology
set is trivial. It follows that $H^1(N,\sigma)$ has 3 elements, namely the classes of $1$, 
$\diag(-1,-1,1)$ and $g_0$. However, the cocycle $\diag(-1,-1,1)$ is not equivalent to the
identity in $Z^1(\SL(3,\C),\sigma)$. From this it follows that we get subalgebras corresponding to 
the class of the identity and the class of $g_0$. Let $g$ be as in \eqref{eq:eq3}. Then 
$g\cdot \v$ is spanned by $A_6-A_7$, $A_1+2A_2+3A_4$. (We remark that his result corresponds to the 
known fact that $\mathfrak{su}(2,1)$ has two Cartan subalgebras up to conjugacy.) 
Denote
\begin{equation}
\begin{array}{llll}
\v_{2,3}= \langle A_1, ~A_2\rangle ~\text{and}~
\v_{2,4} = \langle A_6-A_7, ~ A_1+2A_2+3A_4\rangle.
\end{array}
\end{equation}

\noindent $\underline{\v=\langle X_\alpha+X_\beta,H_\alpha+H_{\beta}\rangle}$:
Let $g_0$, $h$, $S(a)$ be as in the treatment of the algebra spanned by $X_\alpha+X_\beta$,
$X_{\alpha+\beta}$. Also here we have $g_0\cdot \v = \sigma(\v)$. Moreover, $h\cdot\v$ is spanned
by $A_3+A_4-A_7+A_8$, $A_5$. Also in this case the identity component of the stabilizer
is a 1-dimensional torus consisting of $\diag(a,1,a^{-1})$ times a unipotent group.
So the reductive part of the stabilizer of $h\cdot \v$ is the torus with elements $S(a)$.
Also in this case the component group is of order 3. So also in this case the Galois cohomology
is trivial. We get one subalgebra.
Denote
\begin{equation}
\v_{2,5} = \langle A_3+A_4-A_7+A_8, ~A_5\rangle.
\end{equation}

\noindent $\underline{\langle X_\alpha, Y_{\beta}\rangle}$, $\underline{\langle X_\alpha, -H_\alpha+H_\beta+3X_{\alpha+\beta}\rangle}$, 
$\underline{\langle X_\alpha, -2H_\alpha-H_\beta+3Y_{\beta}\rangle}$: for these algebras $\v$ there is no $g\in \SL(3,\C)$ with 
$g\cdot\v = \sigma(\v)$. Hence they do not yield subalgebras of $\mathfrak{su}(2,1)$.

\vspace{2mm}

\noindent $\underline{\v_\lambda=\langle X_\alpha, \lambda H_\alpha+(2\lambda+1)H_\beta\rangle}$:
in this case there is a $g\in \SL(3,\C)$ with $g\cdot \v_\lambda = \sigma(\v_\lambda)$ if and only if $\lambda=-\tfrac{1}{2}+iy$ where $y\in \R$.
So we suppose $\lambda = -\tfrac{1}{2}+iy$. Let $g,g_0$ be as in \eqref{eq:eq3}. Then $g_0\cdot \v_{\lambda }= \sigma(\v_{\bar\lambda})$.
Hence $g\cdot \v_{\lambda}$ is real. It is spanned by $A_1+2A_2-A_4+\sqrt{2}A_6-\sqrt{2}A_8$, $2yA_1+4yA_2+6yA_4-\sqrt{2}A_5+\sqrt{2}A_7$.
The stabilizer of $\v_{\lambda}$ is connected. Its reductive part is a torus consisting of the elements 
$\diag(a,b,(ab)^{-1})$. In exactly the same way as below \eqref{eq:eq3} we see that the first Galois cohomology of
the stabilizer of $g\cdot \v_{\lambda}$ is trivial. So here we get one series of algebras.
Let 
\begin{equation}
\v_{2,6}^\lambda=\big\langle A_1+2A_2-A_4+\sqrt{2}(A_6-A_8), ~2\lambda A_1+4\lambda A_2+6\lambda A_4-\sqrt{2}(A_5-A_7) \big\rangle,
\end{equation}
where  $\lambda \in \mathbb{R}$. 
Suppose that $\v_{2,6}^\lambda \sim \v_{2,6}^\eta$, with $\lambda, \eta \in \mathbb{R}$. Then, $2\lambda A_1+4\lambda A_2+6\lambda A_4-\sqrt{2}(A_5-A_7)$ and  
\begin{equation}
\alpha (A_1+2A_2-A_4+\sqrt{2}(A_6-A_8))+ 
\beta (2\eta A_1+4\eta A_2+6\eta A_4-\sqrt{2}(A_5-A_7))
\end{equation}
 must have the same Jordan Form for some $\alpha, \beta \in \mathbb{R}$, with  $\beta \neq 0$ (up to permutation of Jordan Blocks). Their respective Jordan Forms are
 \begin{small}
\begin{equation}
\begin{array}{lll}
       \begin{pmatrix}
-2i(i+2 \lambda) & 0 & 0\\
0 & 2i(i-2\lambda) & 0\\
0&0&8 i \lambda
\end{pmatrix} ~\text{and}~ 
\begin{pmatrix}
-2i \beta (i+2 \eta) & 0 & 0\\
0 & 2i\beta (i-2\eta) & 0\\
0&0&8 i \beta \eta
\end{pmatrix}.
\end{array}
\end{equation}\end{small}
Given that $\alpha, \beta,  \lambda, \eta \in \mathbb{R}$, with  $\beta 
\neq 0$, the only way for the Jordan Forms to coincide (up to permutation of Jordan Blocks) is if $\eta =\pm \lambda$. However, one can show by direct computation that
$\v_{2,6}^\lambda$ and $\v_{2,6}^{-\lambda}$ are not equivalent for $\lambda \neq 0$. 
Hence, $\v_{2,6}^\lambda \sim \v_{2,6}^\eta$ if and only if $\eta = \lambda$.
\end{proof}

\begin{theorem}\label{threesu21t}
A complete list of inequivalent, three-dimensional solvable real subalgebras of $\su$, up to conjugation in $\mathrm{SU}(2,1)$, is given in Table
\ref{threesu21}.
\end{theorem}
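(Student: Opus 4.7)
The plan is to follow the same template established in Theorems \ref{onesu21t}--\ref{twosu21t} and applied to $\mathfrak{sl}_3(\R)$ in Theorem \ref{threedsl3solvt}, processing each three-dimensional solvable subalgebra $\u$ in Table \ref{table3sl3C} in turn. For a subalgebra $\u$ with real basis (in the $A_i$ coordinates from the beginning of Section 6), I would verify realness, then compute $H^1(\mathcal{Z}_G(\u),\sigma)$ using the stabilizer data from Table \ref{tab:stab3}, the torus lemmas \ref{lem:T1}, \ref{lem:T2}, Sansuc's lemma \ref{lem:sansuc}, and Proposition \ref{prop:compodd}, while filtering out the classes whose image in $H^1(\SL_3(\C),\sigma)$ is the nontrivial class $[\diag(-1,-1,1)]$ per Remark \ref{rem:H1su}. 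For $\u$ presented in a non-real basis, I would first test whether $\sigma(\u)$ lies in the $\SL_3(\C)$-orbit of $\u$ (a necessary condition by Lemma 4.3); when it does, find an explicit $g_0$ with $g_0\cdot \u = \sigma(\u)$ and then $g_1\in\SL_3(\C)$ with $g_1^{-1}\sigma(g_1)=g_0$, producing the real representative $g_1\cdot\u$ and the twisted stabilizer $g_1\mathcal{Z}_G(\u)g_1^{-1}$ to which the cohomology analysis is applied.

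Concretely, I expect the subalgebras $\langle X_\alpha,X_\beta,X_{\alpha+\beta}\rangle$, $\langle X_\alpha,X_{\alpha+\beta},2H_\alpha+H_\beta\rangle$ and its variants with nilpotent perturbations, $\langle X_\alpha,X_{\alpha+\beta},H_\beta\rangle$, and $\langle X_\alpha,H_\alpha,H_\beta\rangle$ to be treated by the connected/near-connected stabilizer route: the stabilizer in each case is either $T\ltimes U$ for a small torus $T$ with conjugation conjugate to one of the forms in Lemmas \ref{lem:T1}--\ref{lem:T2}, or has an odd-order component group so Proposition \ref{prop:compodd} applies. For $\langle X_\alpha,Y_\beta,H_\alpha-H_\beta\rangle$, $\langle X_\alpha,Y_\beta,H_\alpha+H_\beta\rangle$, and the $2$-element component-group cases analogous to those in Theorem \ref{threedsl3solvt}, I would use the procedure at the end of Section \ref{sec:H1facts}: compute $H^1(C,\sigma)$ on the component group by brute force, then for each component-group class twist the conjugation on the identity component by a chosen cocycle lift and reduce to one of the standard torus computations. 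In each case the twisted torus will again turn out to be of the form $\diag(a,b,(ab)^{-1})\mapsto\diag(\bar b^{-1},\bar a^{-1},\overline{ab})$ or $\diag(a,1,a^{-1})\mapsto\diag(\bar a,1,\bar a^{-1})$, so Lemma \ref{lem:T2} or Lemma \ref{lem:T1}(1) collapses the torus contribution.

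The genuinely delicate pieces are the two one-parameter families $\u^a=\langle X_\alpha,X_{\alpha+\beta},(a-1)H_\alpha+aH_\beta\rangle$ and $\u^a=\langle X_\alpha,Y_\beta,H_\alpha+aH_\beta\rangle$, paralleling the treatment of $\v_{1,5}^\lambda$ in Theorem \ref{onesu21t}. Since $\u^a\sim\u^b$ iff $a=b$ or $ab=1$ in $\SL_3(\C)$, and $\sigma(\u^a)=\u^{\bar a}$, the orbits with real points split into the sub-cases $a\in\R$ (real already) and $\bar a=a^{-1}$ (i.e.\ $|a|=1$); I would write down an explicit $g_0$ and $g_1$ in each sub-case (reusing the matrices from Theorem \ref{onesu21t}), compute the real representative in the $A_i$ basis, and then analyze the twisted stabilizer. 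I expect to obtain one real series indexed by $a\in\R$ (modulo $a\mapsto a^{-1}$) and a second real series indexed by the circle arc, parametrized as in Theorem \ref{onesu21t} by $t=y/(x-1)$ for $a=x+iy$ with $x^2+y^2=1$. The main obstacle will be the bookkeeping for these series: identifying which twisted-cocycle classes actually survive the restriction imposed by Remark \ref{rem:H1su} (so that we do not overcount with phantom classes mapping to $[\diag(-1,-1,1)]$ in $H^1(\SL_3(\C),\sigma)$), and checking by direct computation of Jordan forms, as in Theorem \ref{twosu21t}, that the resulting one-parameter families are pairwise inequivalent under $\SU(2,1)$.

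Finally, for the subalgebras $\u$ whose $\SL_3(\C)$-orbit does not meet $\su$ at all (most likely $\langle X_\alpha,X_{\alpha+\beta},H_\beta\rangle$ and certain non-real $\u^a$, mirroring what happened to $\langle X_\alpha,X_{\alpha+\beta}\rangle$ in Theorem \ref{twosu21t}), I would record the negative conclusion via a Gr\"obner basis computation that no $g\in\SL_3(\C)$ solves $g\cdot\u=\sigma(\u)$, and discard them. Assembling the surviving representatives and their twisted siblings into Table \ref{threesu21} then finishes the proof.
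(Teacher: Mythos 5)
Your general template---check for a real point in each $\SL_3(\C)$-orbit, then compute the Galois cohomology of the (possibly twisted) stabilizer---is the one the paper uses. But your concrete predictions about which orbits survive are wrong, and the error is structural, not just bookkeeping. For $\su$ the conjugation is $\sigma(x)=-N\bar{x}^{t}N^{-1}$, which involves the negative transpose and hence acts like the \emph{outer} automorphism of $\ssl_3(\C)$ up to inner ones. In particular it sends the ``row-type'' nilpotent algebra $\langle X_\alpha,X_{\alpha+\beta}\rangle$ to the ``column-type'' algebra spanned by $Y_\alpha,Y_{\alpha+\beta}$ (common image versus common kernel), and these are not $\SL_3(\C)$-conjugate --- this is exactly why the paper discards $\langle X_\alpha,X_{\alpha+\beta}\rangle$ in Theorem \ref{twosu21t}. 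Consequently \emph{every} three-dimensional solvable subalgebra whose nilradical is $\langle X_\alpha,X_{\alpha+\beta}\rangle$ or $\langle X_\alpha,Y_\beta\rangle$ fails the test $\sigma(\v)\in\SL_3(\C)\cdot\v$: this wipes out $\langle X_\alpha,X_{\alpha+\beta},2H_\alpha+H_\beta\rangle$ and its nilpotent perturbations, $\langle X_\alpha,X_{\alpha+\beta},H_\beta\rangle$, $\langle X_\alpha,Y_\beta,H_\alpha\pm H_\beta\rangle$, and \emph{both} one-parameter families $\u^a$ for every value of $a$ (including real $a$; a basis of real matrices is irrelevant here, since ``real'' for $\su$ means $\sigma$-stable, not real-entried). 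You instead expect these to be handled by the cohomology machinery and predict two surviving one-parameter series; Table \ref{threesu21} contains no parameter families at all.

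The paper's actual proof is therefore much shorter than what you outline: only $\langle X_\alpha,X_\beta,X_{\alpha+\beta}\rangle$, $\langle X_\alpha+X_\beta,X_{\alpha+\beta},H_\alpha+H_\beta\rangle$, and $\langle X_\alpha,H_\alpha,H_\beta\rangle$ admit real points, each is transported into $\su$ by an explicit cocycle and element $h$ with $h^{-1}\sigma(h)=g_0$ (reusing the matrices from Eqs.\ \eqref{eq:eq4}--\eqref{eq:eq5} and \eqref{eq:eq3}), and in each case the twisted stabilizer has trivial $H^1$ by Lemmas \ref{lem:T1}, \ref{lem:T2}, \ref{lem:sansuc} and Proposition \ref{prop:compodd}, giving exactly one real orbit apiece; all other cases are dismissed because $\v$ and $\sigma(\v)$ are not conjugate. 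The missing idea in your proposal is the realness test specific to the $\su$-conjugation: you carried over the intuition from Theorem \ref{threedsl3solvt} (where a real-entried basis does mean real) and so misidentify which subalgebras even reach the cohomology stage. Had you run the Gr\"obner-basis conjugacy test you mention in your last paragraph on \emph{all} the candidates rather than only the ones you already suspected, the computation would have corrected this, but as written the proposal asserts a classification that contradicts the theorem.
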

\begin{proof}
\noindent $\underline{\v=\langle X_\alpha, X_\beta, X_{\alpha+\beta}\rangle}$: Let $g_0$, $h$ be as in 
\eqref{eq:eq4}, \eqref{eq:eq5}. Then $g_0^{-1}\cdot \v = \sigma(\v)$ and $h^{-1}\sigma(h)=g_0$. The algebra $h\cdot \v$ is spanned by
$A_4-A_8$, $A_3+A_7$, $A_1+A_2-A_6$. The reductive part of the stabilizer of $\v$ is a torus consisting of 
$\diag(a,b,c)$ with $abc=1$. Hence the reductive part of the stabilizer of $h\cdot \v$ is the torus with elements
$T(a,c)$ as in \eqref{eq:eqT}. Its first Galois cohomology set is trivial. Hence the first Galois cohomology set of
the stabilizer of $h\cdot \v$ is trivial. It follows that we get one algebra.
Denote
\begin{equation}
\v_{3,1} = \langle A_4-A_8, ~A_3+A_7,~ A_1+A_2-A_6\rangle.
\end{equation}

\noindent $\underline{\v=\langle X_\alpha+X_\beta, X_{\alpha+\beta}, H_{\alpha}+H_{\beta}\rangle}$: Let $g_0$, $h$ be as in 
\eqref{eq:eq4}, \eqref{eq:eq5}. Then $g_0^{-1}\cdot \v = \sigma(\v)$ and $h^{-1}\sigma(h)=g_0$. The algebra $h\cdot \v$ is spanned by
$A_3+A_4+A_7-A_8$, $A_1+A_2-A_6$, $A_5$. The reductive part of the identity component of the stabilizer of $\v$ is a torus consisting
of $\diag(a,1,a^{-1})$. The component group has order 3. So by the same analysis as for the subalgebra $\langle X_\alpha+X_\beta\rangle$
we see that the first Galois cohomology set of the stabilizer of $g_0\cdot\v$ is trivial.
Denote
\begin{equation}
\v_{3,2} = \langle A_3+A_4+A_7-A_8, ~A_1+A_2-A_6, A_5\rangle.
\end{equation}

\noindent $\underline{\v=\langle X_\alpha, H_{\alpha}, H_{\beta}\rangle}$: Let $g$, $g_0$ be as in 
\eqref{eq:eq3}. Then $g_0^{-1}\cdot \v = \sigma(\v)$ and $g^{-1}\sigma(g)=g_0$. The algebra $g\cdot \v$ is spanned by 
$4A_4-\sqrt{2}A_6+\sqrt{2}A_8$, $A_5-A_7$, $A_1+2A_2+3A_4$. The stabilizer of $\v$ is connected. Its reductive part is a
torus consisting of $\diag(a,b,c)$ with $abc=1$. As seen below \eqref{eq:eq3} this implies that the first Galois cohomology set
of the stabilizer of $g\cdot \v$ is trivial. 
Denote
\begin{equation}
\v_{3,3} = \langle 4A_4-\sqrt{2}A_6+\sqrt{2}A_8, ~A_5-A_7, ~A_1+2A_2+3A_4\rangle.
\end{equation}

For all other 3-dimensional solvable subalgebras $\v$ of $\mathfrak{sl}(3,\mathbb{C})$ we have that $\v$ and $\sigma(\v)$ are not conjugate under $\SL(3,\C)$. 
So they do not yield subalgebras of $\mathfrak{su}(2,1)$.
\end{proof}

\begin{theorem}\label{threesemisu21t}
A complete list of inequivalent, semisimple real subalgebras of $\su$, up to conjugation in $\mathrm{SU}(2,1)$, is given in Table
\ref{threesemisu21}.
\end{theorem}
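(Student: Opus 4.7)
The plan is to apply the procedure of Section 4 to each of the two semisimple representatives $\mathfrak{u}$ from Table \ref{table5sl3C}, and for each one to determine, via the Galois cohomology of the transported stabilizer, how many $\mathrm{SU}(2,1)$-orbits lie in its $\mathrm{SL}_3(\mathbb{C})$-orbit. A subtlety specific to this section, relative to the $\mathfrak{sl}_3(\mathbb{R})$ case, is that by Remark \ref{rem:H1su} the set $H^1(\mathrm{SL}_3(\mathbb{C}),\sigma)$ is nontrivial, so for each representative I must separately verify which classes in $H^1(\mathcal{Z}_G(\mathfrak{u}),\sigma)$ actually lie in $\ker i_*$.

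For $\mathfrak{u} = \langle X_{\alpha+\beta}, Y_{\alpha+\beta}, H_\alpha+H_\beta\rangle$ I will first evaluate $\sigma(x) = -N\bar x^t N^{-1}$ on the three generators: this shows $\sigma(X_{\alpha+\beta}) = Y_{\alpha+\beta}$ and $\sigma(H_\alpha+H_\beta) = -(H_\alpha+H_\beta)$, so $\sigma(\mathfrak{u}) = \mathfrak{u}$ and we may take $g_0 = g_1 = 1$. Reading off a $\sigma$-fixed basis yields immediately a real subalgebra with basis of the form $\{A_5, A_6, A_1+A_2\}$, realizing a copy of $\mathfrak{su}(2)$ in $\mathfrak{su}(2,1)$. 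To detect further real orbits, I then compute $H^1(\mathcal{Z}_G(\mathfrak{u}),\sigma)$ for $\mathcal{Z}_G(\mathfrak{u}) \cong \mathrm{GL}_2(\mathbb{C})$ (Table \ref{tab:stab4}). Writing the stabilizer in its block form inside $\mathrm{SL}_3(\mathbb{C})$, I expect to find that $\sigma$ restricts to a unitary-type conjugation on this $\mathrm{GL}_2$, and the algorithm of \cite{bg} should yield exactly two cohomology classes, both of which map to the trivial class in $H^1(\mathrm{SL}_3(\mathbb{C}),\sigma)$. The nontrivial class is represented by an explicit cocycle $c$; solving $g_1^{-1}\sigma(g_1) = c$ gives the second orbit representative, which will be a regularly embedded copy of $\mathfrak{sl}_2(\mathbb{R}) \cong \mathfrak{su}(1,1)$.

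For $\mathfrak{u} = \langle X_\alpha+X_\beta, 2Y_\alpha+2Y_\beta, 2H_\alpha+2H_\beta\rangle$ a short direct computation shows $\sigma(\mathfrak{u}) \neq \mathfrak{u}$. I then search, computer-aided, for $g_0 \in \mathrm{SL}_3(\mathbb{C})$ with $g_0\cdot \mathfrak{u} = \sigma(\mathfrak{u})$ and $g_0\sigma(g_0) = 1$, and exhibit $g_1$ with $g_1^{-1}\sigma(g_1) = g_0$, making $g_1 \cdot \mathfrak{u}$ a real subalgebra, which will be a principally embedded $\mathfrak{sl}_2(\mathbb{R})$ in $\mathfrak{su}(2,1)$. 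The stabilizer is $\mathrm{PSL}_2(\mathbb{C}) \times \mu_3$ by Table \ref{tab:stab4}, so Proposition \ref{prop:compodd} reduces the cohomology of the $g_1$-twisted stabilizer to that of $\mathrm{PSL}_2(\mathbb{C})$ with the twisted conjugation, which is known to have two classes. In view of Remark \ref{rem:H1su} I expect exactly one of these to lie in $\ker i_*$, giving a single real orbit; this is consistent with the fact that $\mathfrak{su}(2)$ acting irreducibly on $\mathbb{C}^3$ preserves a positive-definite Hermitian form and therefore cannot be embedded principally inside $\mathfrak{su}(2,1)$.

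The main obstacle will be identifying $\sigma$ on each transported stabilizer in coordinates amenable to Lemmas \ref{lem:T1}, \ref{lem:T2}, \ref{lem:sansuc} and Proposition \ref{prop:compodd}, and then checking class by class which elements of the resulting $H^1$ project trivially to $H^1(\mathrm{SL}_3(\mathbb{C}),\sigma)$ as required by Theorem \ref{th:1} and Remark \ref{rem:H1su}. The explicit construction of the cocycles $g_0$ and their trivializations $g_1$, together with the ensuing real bases in terms of $A_1,\ldots,A_8$, is mostly routine computer-algebra work carried out along the same lines as in the proofs of Theorems \ref{onesu21t}--\ref{threesu21t}.
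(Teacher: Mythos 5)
Your plan follows the paper's proof in all essentials: for each of the two complex semisimple classes, locate a real point (trivially for the regular $\mathfrak{sl}_2$, via a cocycle $g_0$ and a trivialization $g_1$ for the principal one), compute $H^1$ of the stabilizer using its structure as $\GL_2(\C)$ resp. $\mathrm{PSL}_2(\C)\times\mu_3$, and then discard the classes that are nontrivial in $H^1(\SL_3(\C),\sigma)$ per Remark \ref{rem:H1su}. The final count — two real orbits for $\langle X_{\alpha+\beta},Y_{\alpha+\beta},H_\alpha+H_\beta\rangle$ and one for the principal $\mathfrak{sl}_2$ — agrees with Table \ref{threesemisu21}, and your observation that a principally embedded $\mathfrak{su}(2)$ cannot preserve a form of signature $(2,1)$ is a valid sanity check absent from the paper.

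Two details are off, though neither changes the outcome. First, $H^1(\GL_2(\C),\sigma)$ for the unitary-type conjugation is \emph{not} two classes both mapping trivially: the paper's computation gives three classes, $[1]$, $[\diag(-1,-1,1)]$, $[\diag(1,-1,-1)]$ (essentially the three signatures of a rank-$2$ Hermitian form), of which exactly one is excluded by Remark \ref{rem:H1su}; it is the subset lying in $\ker i_*$ that has two elements, not $H^1$ of the stabilizer itself. Second, your compact/noncompact labels are reversed: the $\sigma$-fixed real form $\langle A_1+A_2, A_5, A_6\rangle$ of $\langle X_{\alpha+\beta},Y_{\alpha+\beta},H_\alpha+H_\beta\rangle$ sits in the $(1,3)$-block, where $N$ restricts to signature $(1,1)$, so it is $\mathfrak{su}(1,1)\cong\mathfrak{sl}_2(\R)$ (note $A_5$ has real nonzero eigenvalues); the \emph{second} orbit representative $\langle A_1,A_3,A_4\rangle$ lies in the positive-definite $(1,2)$-block and is the compact $\mathfrak{su}(2)$.
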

\begin{proof}
\noindent $\underline{\v=\langle X_\alpha+X_\beta, 2Y_{\alpha}+2Y_\beta, 2H_\alpha+2H_{\beta}\rangle}$:
Let $g= \diag(1,-1,-1)$, then $g^{-1}\cdot \v = \sigma(\v)$. Let $g_2$ be as in \eqref{eq:g2}, then $g_2^{-1}\sigma(g_2)= g$. Hence $g_2\cdot \v$ is real;
it is spanned by $A_4, A_6, A_7$. The stabilizer of $\v$ in $\SL(3,\C)$ is $Z=\mathrm{PSL}(2,\C)\times \{\diag(\omega,\omega,\omega)\}$ with $\omega^3=1$.
The stabilizer of $g_2\cdot\v$ is $g_2Zg_2^{-1}$, which has the same direct product decomposition. The first Galois cohomology set of the second 
factor is trivial. The Lie algebra of the first factor is $g_2\cdot \v$. 
A computation with the algorithm of \cite{bg} shows that the first Galois cohomology set of the first factor consists of the
classes $[1]$ and $[\diag(-1,-1,1)]$. The second class is not trivial in $H^1(\SL(3,\C),\sigma)$, and hence does not correspond to a real subalgebra.
We see that we get a single real subalgebra.
Denote
\begin{equation}
\v_{3,4} = \langle A_4,~ A_6, ~A_7\rangle.
\end{equation}

\noindent $\underline{\v=\langle X_{\alpha+\beta}, Y_{\alpha+\beta}, H_\alpha+H_{\beta}\rangle}$:
In this case we have $\sigma(\v)=\v$ so that $\v$ is real. A real basis of it is $A_1+A_2, A_5, A_6$. Let $Z$ denote the stabilizer of $\v$ in $\SL(3,\C)$.
From Table \ref{tab:stab4} we see that it is isomorphic to $\GL(2,\C)$. A computation with the algorithm from \cite{bg} shows that 
$H^1(Z,\sigma)$ consists of $[1]$, $[\diag(-1,-1,1)]$, $[\diag(1,-1,-1)]$. The first element corresponds to $\u$ itself. The second element does not correspond
to any real subalgebra. For the third element we let $g_2$ be as in \eqref{eq:g2}. Then $g_2^{-1}\sigma(g_2)= \diag(1,-1,-1)$. Hence $g_2\cdot\v$ is a second 
real subalgebra. It is spanned by $A_1, A_3, A_4$. 
Denote
\begin{equation}
\begin{array}{lllll}
\v_{3,5} = \langle A_1+A_2, ~A_5, ~A_6\rangle ~\text{and}~
\v_{3,6} = \langle A_1, ~A_3, ~A_4\rangle.
\end{array}
\end{equation}
\end{proof}

\begin{theorem}\label{fourfivesu21t}
A complete list of inequivalent, four- and five-dimensional solvable real subalgebras of $\su$, up to conjugation in $\mathrm{SU}(2,1)$, is given in Table
\ref{fourfivesu21}.
\end{theorem}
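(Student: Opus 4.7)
The plan is to proceed case by case through each four- and five-dimensional solvable subalgebra $\mathfrak{v}$ of $\mathfrak{sl}_3(\C)$ listed in Table \ref{table4sl3C}, mirroring exactly the pattern of the proofs of Theorems \ref{twosu21t} and \ref{threesu21t}. For each such $\mathfrak{v}$ I would first test whether the $\SL_3(\C)$-orbit of $\mathfrak{v}$ contains a real point, which by Lemma of Section~4 is equivalent to $\sigma(\mathfrak{v})$ being $G$-conjugate to $\mathfrak{v}$. This is done either by direct inspection (writing $\sigma(X_\alpha)=-Y_\alpha$, $\sigma(X_\beta)=Y_\beta$, $\sigma(X_{\alpha+\beta})=Y_{\alpha+\beta}$, $\sigma(H_\alpha)=-H_\alpha$, $\sigma(H_\beta)=-H_\beta$) or by a Gr\"obner basis computation as in the $\langle X_\alpha,X_{\alpha+\beta}\rangle$ case of Theorem~\ref{twosu21t}. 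Subalgebras that fail this test are discarded.

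When $\sigma(\mathfrak{v})\in G\cdot \mathfrak{v}$, the next step is to produce, by explicit computation, a cocycle $g_0\in G$ with $g_0\cdot \mathfrak{v}=\sigma(\mathfrak{v})$ and an element $h\in \SL_3(\C)$ with $h^{-1}\sigma(h)=g_0$; then $h\cdot \mathfrak{v}$ is a real subalgebra and I would record an explicit real basis. The number of $\SU$-orbits inside the complex orbit is then the cardinality of $H^1(h\mathcal{Z}_G(\mathfrak{v})h^{-1},\sigma)$, computed from the stabilizers in Tables \ref{tab:stab5} and \ref{tab:stab6} by combining Lemmas \ref{lem:T1}, \ref{lem:T2}, \ref{lem:sansuc} and Proposition \ref{prop:compodd}. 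For the subalgebras whose stabilizer is $\GL_2(\C)$-like or a torus$\ltimes$unipotent with trivial component group, I expect one real orbit; as emphasized in Remark~\ref{rem:H1su}, any cohomology class whose representative becomes equivalent to $\diag(-1,-1,1)$ in $Z^1(\SL_3(\C),\sigma)$ must be thrown out since it does not correspond to a real orbit.

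The main obstacle I anticipate is the parametric family $\mathfrak{v}^a=\langle X_\alpha,X_\beta,X_{\alpha+\beta},aH_\alpha+H_\beta\rangle$. Since the only $G$-equivalence on the parameter is $a=b$, the condition $\sigma(\mathfrak{v}^a)\sim \mathfrak{v}^a$ forces $a$ to lie on a specific real or arc-like subset of $\C$, treated exactly along the lines of the $\mathfrak{v}_\lambda$ analysis in Theorem~\ref{onesu21t}. In each sub-case I would conjugate the stabilizer (a torus$\ltimes$unipotent) by the corresponding $h$ and check that the induced $\sigma$-action is of the form $(a,b)\mapsto(\bar b^{-1},\bar a^{-1})$, whence Lemma~\ref{lem:T2} trivializes the cohomology and yields a single series; finally, continuous conjugacy within $\SU$ on the resulting real parameter is cut down by direct computation (e.g.\ a Jordan-form argument of the type used for $\mathfrak{v}_{2,6}^\lambda$).

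For the two remaining subalgebras $\langle X_\alpha,X_{\alpha+\beta},H_\alpha,H_\beta\rangle$ and $\langle X_\alpha,Y_\beta,H_\alpha,H_\beta\rangle$, whose stabilizers carry a nontrivial component group (as in the $\mathrm{SL}_3(\R)$ analogues of Theorem~\ref{fourfivedsl3solvt}), I would apply the procedure at the end of Section~\ref{sec:H1facts}: compute $H^1(C,\sigma)$ of the component group by brute force, twist $\sigma$ by a lift of each nontrivial class, apply Lemmas \ref{lem:T1}/\ref{lem:T2}/\ref{lem:sansuc} to the identity component under each twisted conjugation, and then glue the results, discarding classes equivalent to $\diag(-1,-1,1)$ per Remark~\ref{rem:H1su}. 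The unique five-dimensional case $\langle X_\alpha,X_\beta,X_{\alpha+\beta},H_\alpha,H_\beta\rangle$ reduces, modulo unipotent radical by Lemma~\ref{lem:sansuc}, to a two-dimensional torus whose cohomology is handled again by Lemma~\ref{lem:T2}; this should produce a single real orbit, which would be recorded as the last entry of Table~\ref{fourfivesu21}.
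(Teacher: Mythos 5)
Your overall strategy is the paper's strategy: for each entry of Table \ref{table4sl3C}, first test whether the complex orbit meets the real locus, then compute $H^1$ of the conjugated stabilizer via Lemmas \ref{lem:T1}, \ref{lem:T2}, \ref{lem:sansuc} and Proposition \ref{prop:compodd}, discarding classes that become $\diag(-1,-1,1)$ in $Z^1(\SL_3(\C),\sigma)$ per Remark \ref{rem:H1su}. The paper, however, settles almost every case at the first step: it keeps only $\langle X_\alpha,X_\beta,X_{\alpha+\beta},H_\alpha\pm H_\beta\rangle$ and the five\-/dimensional algebra, and asserts that every other four\-/dimensional solvable $\v$ has $\sigma(\v)$ not $\SL_3(\C)$-conjugate to $\v$. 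For $\langle X_\alpha,X_{\alpha+\beta},H_\alpha,H_\beta\rangle$ and $\langle X_\alpha,Y_\beta,H_\alpha,H_\beta\rangle$ that is correct (no Weyl element carries the root pair $\{\alpha,\alpha+\beta\}$ to $\{-\alpha,-\alpha-\beta\}$, nor $\{\alpha,-\beta\}$ to $\{-\alpha,\beta\}$), so the non-connected-group machinery you set up for them is never invoked; your own first test would already discard them.

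The genuine divergence is the family $\v^a=\langle X_\alpha,X_\beta,X_{\alpha+\beta},aH_\alpha+H_\beta\rangle$, and here your expectation of ``an arc of parameters yielding a single series'' appears to be correct while the paper's dismissal does not. Since $\sigma$ carries $X_\alpha,X_\beta,X_{\alpha+\beta}$ to multiples of $Y_\alpha,Y_\beta,Y_{\alpha+\beta}$ and $aH_\alpha+H_\beta=\diag(a,1-a,-1)$ to $-\bar aH_\alpha-H_\beta$, conjugating $\sigma(\v^a)$ by an antidiagonal representative of the longest Weyl element gives $\v^{1/\bar a}$; as $\v^a\sim\v^b$ iff $a=b$, the orbit meets the real locus precisely when $|a|=1$. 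The obstruction of Remark \ref{rem:H1su} does not intervene: the stabilizer of $\v^a$ is the full Borel (Table \ref{tab:stab5}), the coset $\mathcal{Z}_G(\v^a)g_0$ contains the cocycle $c$ with $cN$ of signature $(2,1)$, so $[c]$ is trivial in $H^1(\SL_3(\C),\sigma)$ and real points exist. One can exhibit them inside the paper's own $\v_{5,1}=\mf m\oplus\mf a\oplus\mf n$, where $\mf m=\R(A_1-A_2)$, $\mf a=\R A_5$ and $\mf n=\langle A_1+A_2+A_6,\,A_3-A_7,\,A_4+A_8\rangle$: for $s,t\in\R$ with $st\neq 0$ the subalgebra $\mf n\oplus\R\bigl(sA_5+t(A_1-A_2)\bigr)$ is a real four-dimensional solvable subalgebra of $\su$ whose complexification is conjugate (by the paper's $h$ of Eq.\ \eqref{eq:eqh}) to $\mf n^+\oplus\C\,\diag(-s+it,-2it,s+it)$, i.e.\ to $\v^a$ with $a=(s-it)/(s+it)$, $|a|=1$, $a\neq\pm1$; the twisted cohomology of the Borel is then trivial by Lemmas \ref{lem:T2} and \ref{lem:sansuc}, giving exactly one $\SU$-class per such $a$. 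These are not $\SU$-conjugate to $\v_{4,1}$ or $\v_{4,2}$ (their complexifications are already not $\SL_3(\C)$-conjugate), so they are absent from Table \ref{fourfivesu21}. In short: your method is the paper's, but executing your plan on the parametric family would contradict the paper's claim that it contributes nothing, and the evidence favours your version; the resulting table (and hence the theorem as stated) would need to be amended to include this one-parameter series.
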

\begin{proof}
\noindent $\underline{\v=\langle X_\alpha, X_\beta, X_{\alpha+\beta}, H_\alpha+H_\beta \rangle}$: Let $g_0$, $h$ be as
in \eqref{eq:eqg0}, \eqref{eq:eqh} respectively. Then $\sigma(\v) = g_0\cdot \v$, $g_0$ is a cocycle and $h^{-1}\sigma(h) = g_0$.
Hence $h\cdot \v$ is real. In fact, it is spanned by $A_1+A_2+A_6, A_5, A_3-A_7, A_4+A_8$. The stabilizer of $\v$ in
$\SL(3,\C)$ is connected. Its reductive part is a 2-dimensional torus consisting of $T(a,c) = \diag(a,b,c)$ with $b=(ac)^{-1}$.
So the reductive part of the stabilizer of $h\cdot \v$ consists of $S(a,c) = hT(a,c)h^{-1}$. We have 
\begin{equation}
    S(a,c) = \begin{pmatrix} \tfrac{a+c}{2} & 0 & \tfrac{-a+c}{2}\\
0&b&0 \\ \tfrac{-a+c}{2} & 0 & \tfrac{a+c}{2}\end{pmatrix} \text{ with } b=(ac)^{-1}.
\end{equation}
A computation shows that $\sigma(S(a,c)) = S(\bar c^{-1}, \bar a^{-1})$. Therefore the first Galois cohomology set of the reductive part,
and hence of the stabilizer, is trivial (Lemmas \ref{lem:T2}, \ref{lem:sansuc}).
So we get one subalgebra. Denote \begin{equation}\v_{4,1} = \langle 
A_1+A_2+A_6, A_5, A_3-A_7, A_4+A_8\rangle.\end{equation}

\vspace{2mm}

\noindent $\underline{\v=\langle X_\alpha, X_\beta, X_{\alpha+\beta}, H_\alpha-H_\beta \rangle}$: Let $g_0$, $h$ be as in the previous case. Then $\sigma(\v) = g_0^{-1}\cdot \v$ and the stabilizer of
$\v$ in $\SL(3,\C)$ is exactly the same as in the previous case. So again we get one algebra,
\begin{equation}\v_{4,2} = \langle A_1-A_2, A_1+A_2+A_6, A_3-A_7, A_4+A_8\rangle.\end{equation}

For the other solvable, four-dimenasional  subalgebras $\v$ of $\sll(3,\C)$ in Table \ref{table4sl3C} we have that $\v$ and $\sigma(\v)$ are not conjugate
under $\SL(3,\C)$ and hence do not correspond to a subalgebra of $\su$.
We now consider the single five-dimensional, solvable subalgebra of $\mathfrak{sl}_3(\mathbb{C})$ in Table \ref{table4sl3C}.

\noindent $\underline{\v=\langle X_\alpha, X_\beta, X_{\alpha+\beta}, H_\alpha, H_\beta \rangle}$: Let $g_0$, $h$ be as
in \eqref{eq:eqg0}, \eqref{eq:eqh} respectively. Then $\sigma(\v) = g_0^{-1}\cdot \v$, $g_0$ is a cocycle and $h^{-1}\sigma(h) = g_0$. Again the stabilizer and hence the analysis is exactly the same as in
the first case above. 
So we get one subalgebra.  Denote \begin{equation}\v_{5,1} = \langle A_1-A_2,  A_1+A_2+A_6, A_5, A_3-A_7, A_4+A_8\rangle.\end{equation}

For all other 4-dimensional solvable subalgebras $\v$ of $\mathfrak{sl}(3,\mathbb{C})$ we have that $\v$ and $\sigma(\v)$ are not conjugate under $\SL(3,\C)$. So they do not yield subalgebras of $\mathfrak{su}(2,1)$.
\end{proof}

\begin{theorem}\label{levisu21t}
A complete list of inequivalent, Levi decomposable  real subalgebras of $\mathfrak{su}(2,1)$, up to conjugation in $\mathrm{SU}(2,1)$, is given in Table \ref{levisu21}.
\end{theorem}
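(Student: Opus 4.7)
The plan is to apply the procedure of Section 4 to each Levi decomposable subalgebra of $\mathfrak{sl}_3(\mathbb{C})$ appearing in Table \ref{table6sl3C}, namely $\u_{4,9}$, $\u_{5,2}$, $\u_{5,3}$, $\u_{6,1}$, and $\u_{6,2}$. Each of these contains the Levi factor $\mathfrak{s} = \langle X_{\alpha+\beta}, Y_{\alpha+\beta}, H_\alpha+H_\beta \rangle$, and the proof of Theorem \ref{levisl3t} notes that in every case the stabilizer in $\SL_3(\mathbb{C})$ is isomorphic to $\GL_2(\mathbb{C})$.

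First I would compute $\sigma$ on the relevant basis elements. A direct calculation using $\sigma(x) = -N\bar{x}^tN^{-1}$ yields $\sigma(X_{\alpha+\beta}) = Y_{\alpha+\beta}$, $\sigma(H_\alpha+H_\beta) = -(H_\alpha+H_\beta)$, $\sigma(H_\alpha-H_\beta) = -(H_\alpha-H_\beta)$, $\sigma(X_\alpha) = -Y_\alpha$, and $\sigma(Y_\beta) = X_\beta$. Hence $\sigma$ preserves both $\mathfrak{s}$ and $\langle H_\alpha - H_\beta\rangle$ setwise, while $\sigma(\langle X_\alpha, Y_\beta\rangle) = \langle X_\beta, Y_\alpha\rangle$. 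It follows that $\sigma(\u_{4,9}) = \u_{4,9}$, whereas $\sigma(\u_{5,2}) = \u_{5,3}$ and $\sigma(\u_{6,1}) = \u_{6,2}$. The pairs $(\u_{5,2},\u_{5,3})$ and $(\u_{6,1},\u_{6,2})$ are related by the outer automorphism $x \mapsto -x^t$ of $\mathfrak{sl}_3(\mathbb{C})$, which is not inner, and thus represent distinct $\SL_3(\mathbb{C})$-orbits in Table \ref{table6sl3C}. By the lemma preceding Theorem \ref{th:1}, a $G$-orbit whose image under $\sigma$ is a different $G$-orbit contains no real point, and therefore $\u_{5,2}, \u_{5,3}, \u_{6,1}, \u_{6,2}$ contribute no real subalgebras of $\mathfrak{su}(2,1)$.

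Only $\u_{4,9}$ remains, and it is already real, with real basis $\{A_1, A_2, A_5, A_6\}$. Its stabilizer $\mathcal{Z}_G(\u_{4,9}) \cong \GL_2(\mathbb{C})$ acts on $\mathrm{span}\{e_1,e_3\}$ and fixes $e_2$ up to scalar; the condition of preserving $\langle H_\alpha - H_\beta\rangle$ is automatic since that element is a scalar on the $\{e_1,e_3\}$-block. Consequently the stabilizer together with the restriction of $\sigma$ to it coincides with the one analyzed for the semisimple subalgebra $\langle X_{\alpha+\beta},Y_{\alpha+\beta},H_\alpha+H_\beta\rangle$ in the proof of Theorem \ref{threesemisu21t}. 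Invoking that computation gives $H^1(\mathcal{Z}_G(\u_{4,9}),\sigma) = \{[1],[\diag(-1,-1,1)],[\diag(1,-1,-1)]\}$. By Remark \ref{rem:H1su}, the class $[\diag(-1,-1,1)]$ does not yield a real orbit. The identity class returns $\u_{4,9}$ itself, while the class of $\diag(1,-1,-1)$, conjugated via the element $g_2$ from \eqref{eq:g2} (which satisfies $g_2^{-1}\sigma(g_2) = \diag(1,-1,-1)$), produces the second real subalgebra $g_2\cdot\u_{4,9}$, which is isomorphic to the compact Levi decomposable subalgebra $\mathfrak{s}(\mathfrak{u}(2)\oplus\mathfrak{u}(1)) = \mathfrak{k}$ of $\mathfrak{su}(2,1)$.

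The main obstacle will be the case analysis verifying that the four orbits $\u_{5,2},\u_{5,3},\u_{6,1},\u_{6,2}$ genuinely contribute nothing--in particular, confirming that the pairs $(\u_{5,2},\u_{5,3})$ and $(\u_{6,1},\u_{6,2})$ truly lie in distinct $\SL_3(\mathbb{C})$-orbits rather than being accidentally inner-equivalent. Once this reduction to the single case $\u_{4,9}$ is in hand, the cohomology calculation is a direct transcription from the semisimple case, and the explicit real bases listed in Table \ref{levisu21} follow immediately.
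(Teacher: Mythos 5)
Your proposal is correct and follows essentially the same route as the paper: only the four-dimensional algebra $\langle X_{\alpha+\beta}, Y_{\alpha+\beta}, H_\alpha+H_\beta\rangle \oplus \langle H_\alpha-H_\beta\rangle$ survives the real-point test (you actually justify this more explicitly than the paper, which merely asserts it; note that the distinctness of the $\SL_3(\mathbb{C})$-orbits of $\u_{5,2},\u_{5,3}$ and of $\u_{6,1},\u_{6,2}$ is already guaranteed by their appearing as separate entries in the classification of Table \ref{table6sl3C}, so no further "outer automorphism" argument is needed). The cohomology step is then identical to the paper's: the stabilizer coincides with that of the semisimple subalgebra $\langle X_{\alpha+\beta}, Y_{\alpha+\beta}, H_\alpha+H_\beta\rangle$, so the computation from Theorem \ref{threesemisu21t} carries over verbatim and yields exactly the two real subalgebras $\v_{4,3}$ and $\v_{4,4}$ of Table \ref{levisu21}.
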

\begin{proof}
Among the  Levi decomposable subalgebras $\v$ of $\sll(3,\C)$ there is only one with the property that $\v$ and $\sigma(\v)$ are $\SL(3,\C)$-conjugate.
This is the four-dimensional subalgebra $\v = \langle X_{\alpha+\beta}, Y_{\alpha+\beta}, H_\alpha, H_\beta\rangle$. In fact we have $\v=\sigma(\v)$. A real basis of $\v$ is 
$A_1, A_2, A_5, A_6$. The stabilizer of $\v$ is the same as the stabilizer of the 3-dimensional subalgebra $\langle X_{\alpha+\beta}, Y_{\alpha+\beta}, 
H_\alpha+H_\beta\rangle$. Hence also the Galois cohomology is the same. So we get one more real subalgebra, spanned by $A_1, A_2, A_3, A_4$.
 Denote
\begin{equation}
\v_{4,3} = \langle A_1, ~A_ 2, ~A_5, ~A_6 \rangle ~\text{and}~
\v_{4,4} = \langle A_1, ~A_ 2, ~A_3, ~A_4 \rangle.
\end{equation}
\end{proof}

\begin{table}[H]
\renewcommand{\arraystretch}{1.6} 
\caption{One-Dimensional Real Subalgebras of $\mathfrak{su}(2,1)$ from Theorem \ref{onesu21t}.} \label{onesu21}
\centering
\scalebox{0.95}{
\begin{tabular}{|c|c|}
\hline
Subalgebra Representative & Conditions \\
\hline \hline
$\v_{1,1}=\big\langle -A_1-2A_2-A_4+\sqrt{2} A_6+\sqrt{2} A_8 \big\rangle$ &\\
\hline
$\v_{1,2}=\big\langle A_3+A_4+A_7-A_8 \big\rangle$ & \\
\hline
$\v_{1,3}= \big\langle 3A_1+6A_2-5A_4-\sqrt{2}A_6-\sqrt{2}A_8 \big\rangle$ & \\
\hdashline 
$\v_{1,4}= \big\langle A_1+2A_2+7A_4-\sqrt{2}A_6+\sqrt{2}A_8\big\rangle$ & \\
\hline
$\v_{1,5}^\lambda = \big\langle A_1+\lambda A_2\big\rangle$ & $\lambda\in \R$, ~$ \v_{1,5}^\lambda \sim \v_{1,5}^\eta$ iff $\eta=\lambda$ or  $\frac{\lambda}{\lambda-1}$\\
 & (alternatively, $\v_{1,5}^\lambda$ with $\lambda\in [0,2]$)\\
\hdashline
$\v_{1,6} = \big\langle A_5-A_7\big\rangle$ & \\
\hdashline
$\v_{1,7}^\lambda = \big\langle A_1-A_2+\lambda A_5\big\rangle $ & $\lambda\in \R\setminus \{0\}$, ~ $\v_{1,7}^\lambda \sim \v_{1,7}^\eta$ iff $\eta =\pm \lambda$\\
 & (alternatively $\v_{1,7}^\lambda$ with $\lambda\in (0, \infty))$\\
\hline
\end{tabular} }
\end{table}

\begin{table}[H]
\renewcommand{\arraystretch}{1.6} 
\caption{Two-Dimensional Real Subalgebras of $\mathfrak{su}(2,1)$ from Theorem \ref{twosu21t}.} \label{twosu21}
\centering
\scalebox{0.87}{
\begin{tabular}{|c|c|}
\hline
 Subalgebra Representative & Conditions 
 \\
\hline \hline
 $\v_{2,1}=\langle A_3+A_4-A_7+A_8,  A_1+A_2+A_6 \rangle$  & 
 \\
\hline
$\v_{2,2} = \langle A_1+2A_2-A_4+\sqrt{2}A_6-\sqrt{2}A_8, A_1+2A_2+3A_4\rangle$ &
\\
\hline
$\v_{2,3}= \langle A_1, A_2\rangle$ & 
\\
\hdashline
$\v_{2,4} = \langle A_6-A_7, A_1+2A_2+3A_4\rangle$ &
\\
\hline
$\v_{2,5}=\langle A_3+A_4-A_7+A_8, A_5\rangle$ & 
\\
\hline
$\v_{2,6}^\lambda = \langle A_1+2A_2-A_4+\sqrt{2}A_6-\sqrt{2}A_8, 2\lambda A_1+4\lambda A_2+6\lambda A_4-\sqrt{2}A_5+\sqrt{2}A_7\rangle$ &  $\lambda \in\R$,\\
&  $\v_{2,6}^\lambda \sim \v_{2,6}^\eta$ iff $\eta=\lambda$\\
\hline
\end{tabular} }
\end{table}

\begin{table}[H]
\renewcommand{\arraystretch}{1.6} 
\caption{Three-Dimensional Solvable Real Subalgebras of $\mathfrak{su}(2,1)$ from Theorem \ref{threesu21t}.} \label{threesu21}
\vspace{2pt}
\centering
\scalebox{0.90}{
\begin{tabular}{|c|}
\hline
Subalgebra Representative \\
\hline \hline
$\v_{3,1}=\langle A_4-A_8, A_3+A_7, A_1+A_2-A_6\rangle$ 
 \\
\hline
$\v_{3,2} = \langle A_3+A_4+A_7-A_8, A_1+A_2-A_6, A_5\rangle$ \\
\hline
$\v_{3,3} = \langle 4A_4-\sqrt{2}A_6+\sqrt{2}A_8, A_5-A_7, A_1+2A_2+3A_4\rangle $\\
\hline
\end{tabular} }
\end{table}

\begin{table}[H]
\renewcommand{\arraystretch}{1.6} 
\caption{Semisimple Real Subalgebras of $\mathfrak{su}(2,1)$ from Theorem \ref{threesemisu21t}.} \label{threesemisu21}
\centering
\scalebox{0.98}{
\begin{tabular}{|c|}
\hline
 Subalgebra Representative \\
\hline \hline
 $\v_{3,4}=\langle A_4, A_6, A_7\rangle$ \\
\hline
$\v_{3,5}=\langle A_1+A_2, A_5, A_6\rangle$ \\
\hdashline
$\v_{3,6}=\langle A_1, A_3, A_4\rangle$\\
\hline
\end{tabular} }
\end{table}

\begin{table}[H]
\renewcommand{\arraystretch}{1.6} 
\caption{Four- and Five-Dimensional Solvable Real Subalgebras of $\mathfrak{su}(2,1)$ from Theorem \ref{fourfivesu21t}.} \label{fourfivesu21}
\centering
\vspace{2pt}
\scalebox{0.98}{
\begin{tabular}{|c|c|c|c|}
\hline
Dim. & Subalgebra Representative \\
\hline \hline
4& $\v_{4,1} = \langle A_1+A_2+A_6, A_5, A_3-A_7, A_4+A_8\rangle$\\
\hline
4 & $\v_{4,2} = \langle A_1-A_2, A_1+A_2+A_6, A_3-A_7, A_4+A_8\rangle$ \\
\hline
5& $\v_{5,1} = \langle A_1-A_2, A_1+A_2+A_6, A_5, A_3-A_7, A_4+A_8\rangle$
\\
\hline
\end{tabular} }
\end{table}

\begin{table}[H]
\renewcommand{\arraystretch}{1.6} 
\caption{Levi Decomposable  Real Subalgebras of $\mathfrak{su}(2,1)$ from Theorem \ref{levisu21t}.} \label{levisu21}
\centering
\vspace{2pt}
\scalebox{0.98}{
\begin{tabular}{|c|c|c|c|}
\hline
Dim. & Subalgebra Representative \\
\hline \hline
 4&  $\v_{4,3}=\langle A_1, A_ 2, A_5, A_6 \rangle$
 \\
\hdashline 
4& $\v_{4,4}=\langle A_1, A_ 2, A_3, A_4 \rangle$
\\
\hline
\end{tabular} }
\end{table}

\section{The real subalgebras of $\suc$}

The special unitary algebra \(\mathfrak{su}(3)\) is the real Lie algebra of traceless, skew-Hermitian \(3 \times 3\) complex matrices.
A basis is given by

\begin{equation}
\arraycolsep=1.5pt\def\arraystretch{1.28}
\begin{array}{llllllllllll}
B_1 &=&
\begin{pmatrix}
0 & i & 0 \\
i & 0 & 0 \\
0 & 0 & 0
\end{pmatrix}, &
B_2 &=&
\begin{pmatrix}
0 & 1 & 0 \\
-1 & 0 & 0 \\
0 & 0 & 0
\end{pmatrix},&
B_3 &=& 
\begin{pmatrix}
0 & 0 & i \\
0 & 0 & 0 \\
i & 0 & 0
\end{pmatrix}, \\
B_4 &=& 
\begin{pmatrix}
0 & 0 & 1 \\
0 & 0 & 0 \\
-1 & 0 & 0
\end{pmatrix}, &
B_5 &=& 
\begin{pmatrix}
0 & 0 & 0 \\
0 & 0 & i \\
0 & i & 0
\end{pmatrix}, &
B_6 &=&
\begin{pmatrix}
0 & 0 & 0 \\
0 & 0 & 1 \\
0 & -1 & 0
\end{pmatrix}, \\
B_7 &=& 
\begin{pmatrix}
i & 0 & 0 \\
0 & -i & 0 \\
0 & 0 & 0
\end{pmatrix}, &
B_8 &=& 
\begin{pmatrix}
0 & 0 & 0 \\
0 & i & 0 \\
0 & 0 & -i
\end{pmatrix}.
\end{array}
\end{equation}

The  special unitary algebra $\mathfrak{su}(3)$ is the compact real form of the complex special linear algebra $\mathfrak{sl}_3(\mathbb{C})$.
The Lie group corresponding to $\mathfrak{su}(3)$ is the special unitary group  
$\mathrm{SU}(3)$. In this section we use the conjugations $\sigma : \SL_3(\C)\to \SL_3(\C)$, $\sigma(g) = \bar{g}^{-t}$ and $\sigma : \ssl_3(\C)\to \ssl_3(\C)$,
$\sigma(x) = -\bar{x}^t$. Then $\mathrm{SU}(3)$ is the group of fixed points of $\SL_3(\C)$ under $\sigma$ and $\mathfrak{su}(3)$ is the algebra of
fixed points of $\ssl_3(\C)$ under $\sigma$. 

We state a theorem concerning the subalgebras of $\suc$. 
Although the result seems to be well-known, we have not found a proof in the existing literature.
It will allow us to significantly shorten our computations. 

\begin{theorem}\label{compactsimplification}
Let $\g$ be a compact semisimple Lie algebra. Let $\u\subset \g$ be a subalgebra, then $\u$ is
reductive in $\g$ (i.e., $\u=\s\oplus \c$ where $\s$ is semisimple and $\c$ is the centre 
consisting of semisimple elements). 
\end{theorem}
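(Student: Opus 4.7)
The plan is to exploit the fact that every compact semisimple Lie algebra carries a negative definite invariant symmetric bilinear form, namely its Killing form $B$. Equivalently, $-B$ is a positive definite inner product $\langle\cdot,\cdot\rangle$ on $\g$ that is $\ad(\g)$-invariant, meaning $\langle[x,y],z\rangle = -\langle y,[x,z]\rangle$ for all $x,y,z\in\g$. The first observation is that this invariance forces the adjoint action of $\u$ on $\g$ to be completely reducible. Indeed, if $V\subset\g$ is any subspace stable under $\ad(\u)$, then for any $x\in\u$, $y\in V^\perp$, and $v\in V$ one has $\langle[x,y],v\rangle = -\langle y,[x,v]\rangle = 0$, since $[x,v]\in V$. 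Hence $V^\perp$ is also $\ad(\u)$-stable, so every $\ad(\u)$-submodule of $\g$ admits an $\ad(\u)$-invariant complement. This is precisely the defining property of $\u$ being reductive in $\g$.

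Next I would extract the structural decomposition. Since $\u\subset\g$ is itself $\ad(\u)$-invariant, the argument applies to any ideal $\v$ of $\u$: $\v^\perp\cap\u$ is a complementary ideal in $\u$. Thus the adjoint representation of $\u$ on itself is completely reducible, so $\u$ is reductive as an abstract Lie algebra, and decomposes as $\u=\s\oplus\c$ with $\s=[\u,\u]$ semisimple and $\c=Z(\u)$ abelian.

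Finally, one must verify that every element of $\c$ is semisimple in $\g$. For any $x\in\g$, the operator $\ad_\g(x)$ is skew-symmetric with respect to $\langle\cdot,\cdot\rangle$, and skew-symmetric operators are diagonalizable over $\C$ (with purely imaginary eigenvalues). Hence every element of $\g$ is $\ad$-semisimple in $\g$, and in particular every element of $\c$ is a semisimple element of $\g$, completing the claim. I do not foresee any serious obstacle; the only fact imported from outside is the standard result that the Killing form of a compact semisimple Lie algebra is negative definite, from which all three steps flow.
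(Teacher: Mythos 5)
Your proof is correct, but it takes a genuinely different route from the paper's. The paper's proof is essentially a citation: it invokes Bourbaki (Chapter 9, \S 1) for the facts that a subalgebra of a compact Lie algebra is compact, that a compact Lie algebra is reductive, and that $\ad x$ is semisimple for every $x$ in a compact Lie algebra. You instead give a self-contained argument from a single imported fact, the negative definiteness of the Killing form $B$ on a compact semisimple Lie algebra. All three of your steps check out: the $\ad(\u)$-invariance of orthogonal complements with respect to $-B$ gives complete reducibility of $\ad_\g|_\u$ (the definition of ``reductive in $\g$''), and since $-B$ is definite the same orthogonality argument applied to ideals of $\u$ yields $\u=[\u,\u]\oplus Z(\u)$ with $[\u,\u]$ semisimple; finally, each $\ad_\g(x)$ is skew-symmetric for the positive definite form $-B$, hence normal, hence diagonalizable over $\C$, so every element of $\g$ (in particular of $\c$) is semisimple. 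What your approach buys is transparency and independence from the general theory of compact Lie algebras; what the paper's approach buys is brevity and the fact that it covers the slightly more general Bourbaki setting (compact, not necessarily semisimple, ambient algebras) without re-deriving anything. Note also that your argument establishes the stronger, standard form of ``reductive in $\g$'' (semisimplicity of the whole adjoint action of $\u$ on $\g$), of which the decomposition stated in the theorem is a consequence.
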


\begin{proof}
According to \cite[\S1, Proposition 2(b)]{bou9} $\u$ is compact. By \cite[\S1 Proposition 1(iv)]{bou9}
$\u$ is reductive. By the same proposition $\ad x$ for $x\in \g$ is semisimple. So also 
$\ad x$ for $x\in \c$ is semisimple. 
\end{proof}

\begin{rmk}\label{rem:H1su3}
A computation with the algorithm of \cite{bg} shows that $H^1( \SL_3(\C),\sigma)$ consists of the classes of the identity and of $\diag(-1,1,-1)$.
So if a class in the first Galois cohomology of a stabilizer is equal in $H^1(\SL_3(\C),\sigma)$ to the class of the latter element, then this class
does not correspond to a real subalgebra. 
\end{rmk}

In Theorem \ref{su3}, we classify the real subalgebras of $\mathfrak{su}(3)$, up to conjugation in $\mathrm{SU}(3)$.
The classification is presented and summarized in Table \ref{susa}. In the table,  $\u \sim \mathfrak{v}$   indicates that subalgebras  $\u$ and  $\mathfrak{v}$ are conjugate with respect to $\mathrm{SU}(3)$.
Conjugacy conditions for cases with parameters are established by direct computational analysis.

In the theorems of this section, we use the  basis of $\mathfrak{sl}_3(\mathbb{C})$ employed in  the classification of subalgebras of $\mathfrak{sl}_3(\mathbb{C})$ from \cite{dr16a, dr16b, dr18}, namely the basis 
$\{H_\alpha, H_\beta, X_\alpha, X_\beta,X_{\alpha+\beta}, $$Y_\alpha, $$ Y_\beta,$$Y_{\alpha+\beta}\}$ 
defined in Eq. \eqref{sl3basiss}. This is also a basis of the real Lie algebra $\mathfrak{sl}_3(\mathbb{R})$. 

\begin{theorem}\label{su3}
A complete list of inequivalent, real subalgebras of $\suc$, up to conjugation in $\mathrm{SU}(3)$, is given in Table
\ref{susa}.
\end{theorem}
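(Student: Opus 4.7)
The overall strategy parallels that used in the preceding two sections, but is greatly simplified by Theorem~\ref{compactsimplification}. Since $\suc$ is compact semisimple, every subalgebra of $\suc$ is reductive in $\ssl_3(\C)$. The plan is therefore first to extract from the classification of complex subalgebras of $\ssl_3(\C)$ (summarized in the appendix tables \ref{table1sl3C}--\ref{table6sl3C}) the sublist consisting of those $\mathfrak{u}\subset \ssl_3(\C)$ that are reductive in $\ssl_3(\C)$. All remaining complex orbits can be discarded, since no element of such an orbit can be conjugate into $\suc$.

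Inspecting the tables, the reductive representatives reduce to a very short list: the one-parameter family of Cartan lines $\langle H_\alpha+aH_\beta\rangle$, the full Cartan $\langle H_\alpha,H_\beta\rangle$, the two semisimple three-dimensional subalgebras $\langle X_{\alpha+\beta},Y_{\alpha+\beta},H_\alpha+H_\beta\rangle$ and $\langle X_\alpha+X_\beta,2Y_\alpha+2Y_\beta,2H_\alpha+2H_\beta\rangle$, and the Levi decomposable $\mathfrak{gl}_2$-subalgebra $\langle X_{\alpha+\beta},Y_{\alpha+\beta},H_\alpha+H_\beta\rangle\oplus\langle H_\alpha-H_\beta\rangle$. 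For each such representative $\mathfrak{u}$, I will run the procedure of Section~4: locate an element $g_0\in\SL_3(\C)$ with $g_0\cdot\mathfrak{u}=\sigma(\mathfrak{u})$ and an $h\in\SL_3(\C)$ with $h^{-1}\sigma(h)=g_0$, so that $h\cdot\mathfrak{u}$ is a real point; then compute $H^1(h\,\mathcal{Z}_G(\mathfrak{u})\,h^{-1},\sigma)$ and enumerate the $\SU(3)$-orbits, discarding any cohomology class whose image in $H^1(\SL_3(\C),\sigma)$ equals the nontrivial class $[\diag(-1,1,-1)]$ by Remark~\ref{rem:H1su3}.

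For the Cartan lines $\langle H_\alpha+aH_\beta\rangle$ the stabilizer is (generically) a two-dimensional torus in the diagonal subgroup, with $\sigma$ acting as $\diag(s,t,(st)^{-1})\mapsto\diag(\bar s^{-1},\bar t^{-1},\overline{st})$; here Lemma~\ref{lem:T2} and Sansuc's lemma handle the generic case, and Proposition~\ref{prop:compodd} handles the finitely many special values (such as $a=-1,\tfrac12,2$ and the roots of unity) where the component group of the stabilizer becomes nontrivial of odd order. The two-dimensional Cartan itself and the two three-dimensional semisimple subalgebras require a short direct computation: $H^1$ of a torus with compact conjugation is trivial (Lemma~\ref{lem:T1}(1)), and for $\mathrm{PSL}_2(\C)$ and $\GL_2(\C)$ the Galois cohomology with respect to the standard compact conjugation is known and can be read off from the algorithm of \cite{bg}. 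The family of Cartan lines should collapse, under the equivalences generated by the Weyl group and by the real-point condition, to a single parameter range together with finitely many isolated points, exactly as in the analogous analysis for $\su$ in Theorem~\ref{onesu21t}.

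The main obstacle I expect is managing the Cartan-line family $\langle H_\alpha+aH_\beta\rangle$: one must determine for which $a\in\C$ the orbit contains a real point (the compact Cartan condition should force $a\in\R$ modulo the Weyl-group identifications $a\sim\tfrac{1}{a}\sim 1-a\sim\cdots$), and then reduce the resulting real parameter range to a fundamental domain and verify that each nontrivial cohomology class either produces a genuinely new $\SU(3)$-orbit or is ruled out by Remark~\ref{rem:H1su3}. The semisimple and $\mathfrak{gl}_2$-cases should yield at most one $\SU(3)$-orbit each because the relevant $H^1$ either is trivial or has its sole nontrivial class obstructed by Remark~\ref{rem:H1su3}, consistent with the well-known fact that $\suc$ has a unique Cartan subalgebra up to conjugacy and a unique principal and unique short-root $\mathfrak{su}(2)$-subalgebra.
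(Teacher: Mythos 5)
Your overall strategy is the same as the paper's: use Theorem~\ref{compactsimplification} (together with the fact that elements of $\suc$ are skew-Hermitian, hence semisimple with purely imaginary eigenvalues) to cut the complex classification down to the five reductive families you list, then analyze each stabilizer's Galois cohomology and discard classes that are nontrivial in $H^1(\SL_3(\C),\sigma)$ via Remark~\ref{rem:H1su3}. However, two of your concrete cohomology claims are wrong and would derail the computation if carried out as stated. For the generic Cartan line, the conjugation acts on the diagonal stabilizer by $\diag(s,t,(st)^{-1})\mapsto\diag(\bar s^{-1},\bar t^{-1},\overline{st})$, i.e.\ coordinatewise by $t\mapsto \bar t^{-1}$ with \emph{no swap of coordinates}; Lemma~\ref{lem:T2} therefore does not apply, and $H^1$ is not trivial. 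The correct tool is Lemma~\ref{lem:T1}(2) applied to each factor, which gives a four-element cohomology set $\{[1],[\diag(1,-1,-1)],[\diag(-1,1,-1)],[\diag(-1,-1,1)]\}$; the paper then exhibits explicit elements showing that all three nontrivial classes map to the nontrivial class of $H^1(\SL_3(\C),\sigma)$ and so are discarded, leaving one real orbit. Your closing remark about Remark~\ref{rem:H1su3} shows you anticipated this mechanism in principle, but the step "Lemma~\ref{lem:T2} and Sansuc's lemma handle the generic case" is not a valid shortcut -- the discarding step is where all the work is.

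The same issue recurs for the full Cartan $\langle H_\alpha,H_\beta\rangle$: under the compact conjugation $\sigma(g)=\bar g^{-t}$ the maximal torus again falls under Lemma~\ref{lem:T1}(2), not Lemma~\ref{lem:T1}(1), so its $H^1$ is not trivial, and moreover the stabilizer has a component group of order $6$ on which Proposition~\ref{prop:compodd} is not applicable. The paper sidesteps this entirely by invoking the classical fact that a compact semisimple Lie algebra has a single Cartan subalgebra up to conjugacy; you would either need to do the same or run the full non-connected-group procedure of Section~\ref{sec:H1facts} and then kill the surviving nontrivial classes with Remark~\ref{rem:H1su3}. A final minor point: for real $\lambda$ the subalgebras $\langle H_\alpha+\lambda H_\beta\rangle$ are already real (spanned by $B_7+\lambda B_8$), so the preliminary search for $g_0$ and $h$ that you describe is unnecessary in every case that survives the reductivity filter.
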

\begin{proof} 
The elements of $\mathfrak{su}(3)$ are skew-Hermitian, and hence by elementary linear algebra they are semisimple and have purely imaginary eigenvalues.
Together with Theorem \ref{compactsimplification} this means that we only have to consider the subalgebras of $\ssl(3,\C)$ that are reductive in 
$\ssl(3,\C)$ and have a basis consisting of semisimple elements that have purely imaginary eigenvalues. A quick check shows that only the subalgebras
remain that we consider below.

\noindent $\underline{\w_\lambda=\langle H_\lambda\rangle}$ with $H_\lambda =
H_\alpha+\lambda H_\beta$ and $\lambda\in \C$. It is straghtforward to see that $\w_\lambda$ has a basis element having purely imaginary
eigenvalues if and only if $\lambda$ is real. So we only have to consider that case.

Here we use exactly the same analysis as
for $\su$. For $\lambda$ real we have that $\w_\lambda$ is real and spanned by 
$B_7+\lambda B_8$. If $\lambda$ is generic, then the stabilizer consists of
$T(a,b)=\diag(a,b,(ab)^{-1})$. We have that $\sigma(T(a,b)) = T(\bar a^{-1},\bar b^{-1})$. 
So Lemma \ref{lem:T1} shows that the first Galois cohomology set consists of the classes of the identity and 
of $g_2= \diag(1,-1,-1)$, $g_3=\diag(-1,1,-1)$, $g_4=\diag(-1,-1,1)$. Set 
\begin{equation}h_2=\begin{pmatrix} 0&1&0\\0&0&-1\\-1&0&0\end{pmatrix},\,
h_3= \begin{pmatrix} 1&0&0\\0&1&0\\0&0&1\end{pmatrix},\,
h_4= \begin{pmatrix} 0&0&1\\ 1&0&0\\ 0&1&0\end{pmatrix}.\end{equation}
Then $h_i^{-1} g_i\sigma(h_i) = \diag(-1,1,-1)$. It follows that all three classes are non-trivial
in $H^1(\SL(3,\C),\sigma)$. Hence, here we just get one real subalgebra. 

If $\lambda=0$ then using the same analysis as for $\su$ we see that $H^1(\mathcal{Z}_G(\w_0),\sigma)$ 
consists of the classes of the identity, $g_3$, $g_4$ and $u$  ($u$ from Eq. \eqref{udefined}). Above $g_3$, $g_4$ have been
shown to be nontrivial. For $u$ we set 
\begin{equation}h= \begin{pmatrix} \tfrac{1}{2}\sqrt{2} & -\tfrac{1}{2}\sqrt{2} & 0\\
-\tfrac{1}{2}\sqrt{2} & -\tfrac{1}{2}\sqrt{2} & 0\\ 0&0&-1\end{pmatrix}.\end{equation}
Then $hu\sigma(h)^{-1} = \diag(-1,1,-1)$ and we see that $u$ is nontrivial as well.
So we get just one algebra. 

If $\lambda=2$ then again using the same analysis as for $\su$ we get that the first Galois
cohomology set consists of the classes of the identity, $T(1,-1)$, $T(-1,-1)$. The last two
are nontrivial. Hence also here we get just one algebra. 

Summarizing: for $\lambda\in \R$ we get one algebra spanned by $B_7+\lambda B_8$. Denote 
\begin{equation} \w^\lambda_1=\langle  B_7+\lambda B_8\rangle, ~\lambda \in \mathbb{R}.\end{equation}

\noindent $\underline{\w=\langle H_\alpha,H_\beta\rangle}$. Here we use the well-known fact that $\suc$ has a single Cartan subalgebra up to conjugacy.
So we get one algebra spanned by $B_7, B_8$.  Denote 
\begin{equation}\w_2=\langle  B_7, B_8 \rangle.\end{equation}

\vspace{2pt}

\noindent $\underline{\w=\langle X_{\alpha+\beta}, Y_{\alpha+\beta}, H_\alpha+H_\beta\rangle}$. This algebra is real and spanned by $B_3,B_4,B_7+B_8$.
Using the algorithm of \cite{bg} we computed that the first Galois cohomology set of the stabilizer consists of the classes of 
$\diag(1,1,1)$, $\diag(-1,1,-1)$, $\diag(-1,-1,1)$. Above we have seen that the last two are nontrivial in $H^1( \SL(3,\C),\sigma)$. So we get one algebra.  Denote 
\begin{equation}\w_{3, 1}=\langle  B_3,B_4,B_7+B_8\rangle.\end{equation}

\vspace{2pt}

\noindent $\underline{\w=\langle X_{\alpha}+X_\beta, 2Y_\alpha+2Y_\beta, 2H_\alpha+2H_\beta\rangle}$. This algebra is real and spanned by 
$B_1+B_5$, $B_2+B_6$, $B_7+B_8$. The first Galois cohomology set of its stabilizer consists of the classes of $\diag(1,1,1)$, $\diag(-1,1,-1)$. The last
one is nontrivial in $H^1( \SL(3,\C),\sigma)$. So we get one algebra. Denote 
\begin{equation} \w_{3, 2}=\langle  B_1+B_5, B_2+B_6, B_7+B_8\rangle.\end{equation}

\vspace{2pt}

\noindent $\underline{\w=\langle X_{\alpha+\beta}, Y_{\alpha+\beta}, H_\alpha+H_\beta\rangle \oplus \langle H_\alpha-H_\beta\rangle}$. This algebra is real and 
spanned by $B_3,B_4,B_7,$ $B_8$. The stabilizer is the same as for $\langle X_{\alpha+\beta}, Y_{\alpha+\beta}, H_\alpha+H_\beta\rangle$, and in the same 
way we see that we get one algebra.  Denote 
\begin{equation}\w_{4}=\langle B_3,B_4,B_7, B_8\rangle.\end{equation}
\end{proof}

\begin{table}[H]
\renewcommand{\arraystretch}{1.6} 
\caption{The Real Subalgebras of $\mathfrak{su}(3)$ from Theorem \ref{su3}.} \label{susa}
\centering
\scalebox{0.98}{
\begin{tabular}{|c|c|c|}
\hline
Dim. & Subalgebra Representative & Conditions \\
\hline \hline
1 & $\w_1^\lambda=\langle B_7+\lambda B_8\rangle$, $\lambda \in \mathbb{R}$ &
$\w_1^\lambda \sim \w_1^\eta$ iff $\eta =\lambda, 1-\lambda, \frac{1}{\lambda}, \frac{\lambda-1}{\lambda}, \frac{\lambda}{\lambda-1},$ or  $\frac{1}{1-\lambda}$\\
\hline
2& $\w_2=\langle B_7, B_8 \rangle$& \\
\hline
3 & $\w_{3,1}=\langle B_3, B_4, B_7+B_8 \rangle$&\\
\hline
3 & $\w_{3,2}=\langle B_1+B_5, B_2+B_6, B_7+B_8 \rangle$ & \\
\hline 
4& $\w_4=\langle B_3, B_4, B_7, B_8 \rangle$& \\
\hline
\end{tabular} }
\end{table}

\appendix

\section*{Appendix: The complex subalgebras of $\mathfrak{sl}_3(\mathbb{C})$.}

We present the classification of complex subalgebras of $\mathfrak{sl}_3(\mathbb{C})$, up to conjugation in $\mathrm{SL}_3(\mathbb{C})$,  from \cite{dr16a, dr16b, dr18}.  The classification is contained in Tables \ref{table1sl3C} through \ref{table6sl3C}, organized according to dimension and structure.  Note  that for the tables in the appendix, $\u \sim \mathfrak{v}$   indicates that subalgebras  $\u$ and  $\mathfrak{v}$ are conjugate with respect to $\mathrm{SL}_3(\mathbb{C})$. The basis for $\mathfrak{sl}_3(\mathbb{C})$ (also a basis for $\mathfrak{sl}_3(\mathbb{R})$) used in the classification is 
$\{H_\alpha, H_\beta, X_\alpha, X_\beta,X_{\alpha+\beta}, Y_\alpha, Y_\beta,Y_{\alpha+\beta}\}$ 
defined in Eq. \eqref{sl3basiss}.

\begin{table} [H]\renewcommand{\arraystretch}{1.6} \caption{One-Dimensional Complex Subalgebras of $\mathfrak{sl}_3(\mathbb{C})$.   } \label{table1sl3C}\centering
\scalebox{0.97}{
\begin{tabular}{|c|c|c|c|c|c|}
\hline
 Subalgebra Representative \\
\hline \hline
 $\langle X_\alpha + X_\beta \rangle$ 
 \\
\hline
 $\langle X_\alpha  \rangle$  
 \\
\hline
 $\langle X_\alpha + H_\alpha+2H_\beta \rangle$  
 \\
\hline
 $\langle H_\alpha + a H_\beta \rangle$  
 \\
   $\langle H_\alpha + a H_\beta \rangle \sim \langle H_\alpha + b H_\beta \rangle$  iff $b=a, \frac{1}{a}, 1-a, \frac{1}{1-a}, \frac{a}{a-1}$, or $\frac{a-1}{a}$    
 \\
\hline
\end{tabular}}
\end{table}

\begin{table} [H]\renewcommand{\arraystretch}{1.6} \caption{Two-Dimensional Complex Subalgebras of $\mathfrak{sl}_3(\mathbb{C})$.   } \label{table2sl3C}\centering
\scalebox{0.97}{
\begin{tabular}{|c|c|c|c|c|c|c|}
\hline
 Subalgebra Representative\\
\hline \hline
 $\langle X_\alpha + X_\beta, X_{\alpha+\beta} \rangle$  
 \\
\hline
 $\langle X_\alpha, H_\alpha+2H_\beta  \rangle$  
 \\
\hline
 $\langle X_\alpha, X_{\alpha+\beta} \rangle$  
 \\
\hline
 $\langle X_\alpha,  Y_\beta \rangle$  
 \\
\hline
 $\langle H_\alpha,  H_\beta \rangle$   
 \\
\hline
 $\langle X_\alpha+X_\beta,  H_\alpha+H_\beta \rangle$   
 \\
\hline
 $\langle X_\alpha, -H_\alpha+H_\beta+3X_{\alpha+\beta}\rangle$   
 \\
\hline
 $\langle X_\alpha, -2H_\alpha-H_\beta+3Y_{\beta} \rangle$  
 \\
\hline
 $\langle X_\alpha, aH_\alpha+(2a+1)H_\beta \rangle$  
 \\
 $\langle X_\alpha, aH_\alpha+(2a+1)H_\beta \rangle \sim \langle X_\alpha, bH_\alpha+(2b+1)H_\beta \rangle$  
iff $a=b$
 \\
\hline
\end{tabular}}
\end{table}

\begin{table} [H]\renewcommand{\arraystretch}{1.6} \caption{Semisimple Complex Subalgebras of $\mathfrak{sl}_3(\mathbb{C})$.   } \label{table5sl3C}\centering
\scalebox{0.97}{
\begin{tabular}{|c|c|c|c|c|c||}
\hline
 Subalgebra Representative \\
\hline \hline
 $\langle X_{\alpha+\beta}, Y_{\alpha+\beta}, H_\alpha+H_\beta \rangle$  
 \\
\hline
 $\langle X_{\alpha}+X_\beta, 2Y_{\alpha}+2Y_{\beta}, 2H_\alpha+2H_\beta \rangle$  
 \\
\hline
\end{tabular}}
\end{table}

\begin{table} [H]\renewcommand{\arraystretch}{1.6} \caption{Three-Dimensional Solvable Complex Subalgebras of $\mathfrak{sl}_3(\mathbb{C})$.   } \label{table3sl3C}\centering
\scalebox{0.97}{
\begin{tabular}{|c|c|c|c|c|c|}
\hline
 Subalgebra Representative\\
\hline \hline
 $\langle X_\alpha, X_{\alpha+\beta}, 2H_\alpha+H_\beta \rangle$  
 \\
\hline
 $\langle X_\alpha, Y_\beta, H_\alpha-H_\beta  \rangle$  
 \\
\hline
 $\langle X_\alpha, X_{\alpha+\beta}, 2H_\alpha+H_\beta+X_\beta \rangle$  
 \\
\hline
 $\langle Y_\alpha, Y_{\alpha+\beta}, 2H_\alpha+H_\beta+X_\beta  \rangle$   
 \\
\hline
 $\langle X_\alpha+X_\beta, X_{\alpha+\beta}, H_\alpha+H_\beta \rangle$   
 \\
\hline
 $\langle X_\alpha,  H_\alpha, H_\beta \rangle$   
 \\
\hline
 $\langle X_\alpha, X_{\alpha+\beta}, (a-1)H_\alpha+aH_\beta\rangle, a \neq \pm 1$.   \\

$\langle X_\alpha, X_{\alpha+\beta}, (a-1)H_\alpha+aH_\beta\rangle \sim \langle X_\alpha, X_{\alpha+\beta}, (b-1)H_\alpha+b H_\beta\rangle$ iff  $a=b$ or $ab=1$ 
 \\
\hline
 $\langle X_\alpha, Y_\beta, H_\alpha+aH_\beta \rangle, a\neq \pm1$.  \\
 $\langle X_\alpha, Y_\beta, H_\alpha+aH_\beta \rangle \sim \langle X_\alpha, Y_\beta, H_\alpha+b H_\beta \rangle$ iff $a=b$ or $ab=1$ 
 \\
\hline
 $\langle X_\alpha, X_{\alpha+\beta}, H_\beta \rangle$  
 \\
\hline
 $\langle X_\alpha, Y_\beta, H_\alpha+H_\beta \rangle$  
 \\
\hline
 $\langle X_\alpha, X_{\beta}, X_{\alpha+\beta} \rangle$   
 \\
\hline
\end{tabular}}
\end{table}

\begin{table} [H]\renewcommand{\arraystretch}{1.6} \caption{Four- and Five-Dimensional Solvable Complex Subalgebras of $\mathfrak{sl}_3(\mathbb{C})$.   } \label{table4sl3C}\centering
\scalebox{0.97}{
\begin{tabular}{|c|c|c|c|c|c|}
\hline
Dim. & Subalgebra Representative \\
\hline \hline
4 & $\langle X_\alpha, X_{\alpha+\beta}, H_\alpha, H_\beta \rangle$  
 \\
\hline
4 & $\langle X_\alpha, Y_\beta, H_\alpha, H_\beta  \rangle$  
 \\
\hline
4 & $\langle X_\alpha, X_\beta, X_{\alpha+\beta}, H_\alpha+H_\beta \rangle$ 
 \\
\hline
4 & $\langle X_\alpha, X_\beta, X_{\alpha+\beta}, aH_\alpha+H_\beta  \rangle, a\neq \pm 1$. 
 \\
 & $\langle X_\alpha, X_\beta, X_{\alpha+\beta}, aH_\alpha+H_\beta  \rangle \sim \langle X_\alpha, X_\beta, X_{\alpha+\beta}, bH_\alpha+H_\beta  \rangle$  
iff $a=b$
 \\
\hline
4 & $\langle X_\alpha,  X_\beta, X_{\alpha+\beta}, H_\alpha \rangle$   
 \\
\hline
4 & $\langle X_\alpha, X_{\beta}, X_{\alpha+\beta}, H_\alpha-H_\beta\rangle$   
 \\
\hline
5 & $\langle X_\alpha, X_\beta, X_{\alpha+\beta}, H_\alpha, H_\beta \rangle$   
 \\
\hline
\end{tabular}}
\end{table}

\begin{table} [H]\renewcommand{\arraystretch}{1.6} \caption{Levi Decomposable Complex Subalgebras of $\mathfrak{sl}_3(\mathbb{C})$.   } \label{table6sl3C}\centering
\scalebox{0.97}{
\begin{tabular}{|c|c|c|c|c|c|}
\hline
Dim. & Subalgebra Representative \\
\hline \hline
4 & $\langle X_{\alpha+\beta}, Y_{\alpha+\beta}, H_\alpha+H_\beta \rangle \oplus \langle H_\alpha-H_\beta\rangle$  
 \\
\hline
5 & $\langle X_{\alpha+\beta}, Y_{\alpha+\beta}, H_\alpha+H_\beta \rangle \inplus \langle X_\alpha, Y_\beta \rangle$  
 \\
\hline
5 & $\langle X_{\alpha+\beta}, Y_{\alpha+\beta}, H_\alpha+H_\beta \rangle \inplus \langle X_\beta, Y_\alpha \rangle$  
 \\
\hline
6 & $\langle X_{\alpha+\beta}, Y_{\alpha+\beta}, H_\alpha+H_\beta \rangle \inplus \langle X_\alpha, Y_\beta, H_\alpha-H_\beta \rangle$  
 \\
\hline
6 & $\langle X_{\alpha+\beta}, Y_{\alpha+\beta}, H_\alpha+H_\beta \rangle \inplus \langle X_\beta, Y_\alpha, H_\alpha-H_\beta \rangle$  
 \\
\hline
\end{tabular}}
\end{table}

\begin{footnotesize}

\begin{table} [H]\renewcommand{\arraystretch}{1.6} \caption{Stabilizers of One-Dimensional Subalgebras of $\mathfrak{sl}(3,\mathbb{C})$ } \label{tab:stab1} 
\begin{tabular}{|c|c|c|clclclc|c|}
\hline
Subalgebra & Stabilizer\\
\hline \hline
$\langle X_{\alpha+\beta}\rangle$ & 
$\begin{pmatrix}
a & b & c\\
0 & d & e\\
0&0&f
\end{pmatrix}$, $adf=1$.
\\
\hline
$\langle X_{\alpha}\rangle$ & 
$\begin{pmatrix}
a & b & c\\
0 & d & 0\\
0&e&f
\end{pmatrix}$, $adf=1$.
\\
\hline
$\langle X_{\alpha}+H_\alpha+2H_\beta\rangle$ & 
$\begin{pmatrix}
a & b & 0\\
0 & a & 0\\
0&0&\frac{1}{a^2}
\end{pmatrix}$, $a\neq 0$.
\\
\hline
$\langle H_{\alpha}+\lambda H_\beta \rangle$ & 
$\begin{pmatrix}
a & b & 0\\
c & d & 0\\
0&0&e
\end{pmatrix}$, $e(ad-bc) =1$, if $\lambda=2$.
\\
& 
$\begin{pmatrix}
a & 0 & 0\\
0 & b & c\\
0&d&e
\end{pmatrix}$, $a(be-cd) =1$, if $\lambda=\frac{1}{2}$.
\\
& 
$\begin{pmatrix}
a & 0 & b\\
0 & c & 0\\
d&0&e
\end{pmatrix}$, $c(ae-bd) =1$, if $\lambda=-1$.
\\
& 
$\begin{pmatrix}
0& 0 & a\\
0 & b & 0\\
c&0&0
\end{pmatrix}$, $abc =-1$, or $\begin{pmatrix}
a& 0 & 0\\
0 & b & 0\\
0&0&c
\end{pmatrix}$, $abc =1$ if $\lambda=1$.
\\
& 
$\begin{pmatrix}
0& a & 0\\
b & 0 & 0\\
0&0&c
\end{pmatrix}$, $abc =-1$,  or $\begin{pmatrix}
a& 0 & 0\\
0 & b & 0\\
0&0&c
\end{pmatrix}$, $abc =1$ if $\lambda=0$.
\\
& 
$\begin{pmatrix}
a& 0 & 0\\
0 & b & 0\\
0&0&c
\end{pmatrix}$, $abc =1$, if $\lambda \neq  0, 1, -1, 2, \frac{1}{2}, \frac{1}{2}\pm \frac{i\sqrt{3}}{2}$.
\\
& 
$\begin{pmatrix}
a& 0 & 0\\
0 & b & 0\\
0&0&c
\end{pmatrix}$, $abc=1$, $\begin{pmatrix}
0& 0 & a\\
b & 0 & 0\\
0&c&0
\end{pmatrix}$, $abc =1$, or 
$\begin{pmatrix}
0& a & 0\\
0 & 0 & b\\
c&0&0
\end{pmatrix}$, $abc =1$, if $\lambda =\frac{1}{2}\pm \frac{i\sqrt{3}}{2}$.
\\
\hline
\end{tabular}
\end{table}

\end{footnotesize}

 \begin{tiny}

\begin{table} [H]\renewcommand{\arraystretch}{1.6} \caption{Stabilizers of Two-Dimensional Subalgebras of $\mathfrak{sl}(3,\mathbb{C})$ }\label{tab:stab2}
\begin{tabular}{|c|c|c|clclclc|c|}
\hline
Subalgebra & Stabilizer\\
\hline \hline
$\langle X_{\alpha}+X_{\beta}, X_{\alpha+\beta} \rangle$  & $\begin{pmatrix}
a^2b & c & d\\
0 & ab & e\\
0&0&b
\end{pmatrix}$, $a^3b^3=1$.
\\
\hline
$\langle X_{\alpha}, H_{\alpha}+2H_{\beta} \rangle$  & $\begin{pmatrix}
a & b & 0\\
0 & c & 0\\
0&0&d
\end{pmatrix}$, $acd=1$.
\\
\hline
$\langle H_{\alpha}, H_{\beta} \rangle$  & $\begin{pmatrix}
a & 0 & 0\\
0 & 0 & b\\
0&c&0
\end{pmatrix}$, $abc=-1$;   $\begin{pmatrix}
0 & a & 0\\
b & 0 & 0\\
0&0&c
\end{pmatrix}$, $abc=-1$;  $\begin{pmatrix}
0 & 0 & a\\
b & 0 & 0\\
0&c&0
\end{pmatrix}$, $abc=1$; \\
& 
$\begin{pmatrix}
0 & a & 0\\
0 & 0 & b\\
c&0&0
\end{pmatrix}$, $abc=1$;  $\begin{pmatrix}
a & 0 & 0\\
0 & b & 0\\
0&0&c
\end{pmatrix}$, $abc=1$;  $\begin{pmatrix}
0 & 0 & a\\
0 & b & 0\\
c&0&0
\end{pmatrix}$, $acb=-1$.
\\
\hline
$\langle X_{\alpha}+X_\beta,  H_{\alpha}+H_{\beta} \rangle$  & $\begin{pmatrix}
a^2b & -abc & \frac{bc^2}{2}\\
0 & ab& -bc\\
0&0&b
\end{pmatrix}$, $a^3b^3=1$.
\\
\hline
$\langle X_{\alpha},  Y_{\beta} \rangle$  & $\begin{pmatrix}
a & b & c\\
0 & d& 0\\
e&f&g
\end{pmatrix}$, $d(ag-ec)=1$.
\\
\hline
$\langle X_{\alpha},  -H_\alpha+H_\beta+3X_{\alpha+\beta}\rangle$  & $\begin{pmatrix}
a & b & c\\
0 & \frac{1}{a^2}& 0\\
0&0&a
\end{pmatrix}$, $a \neq 0$.
\\
\hline
$\langle X_{\alpha},  -2H_\alpha-H_\beta+3Y_{\beta}\rangle$  & $\begin{pmatrix}
\frac{1}{a^2} & b & 0\\
0 & a& 0\\
0&c&a
\end{pmatrix}$, $a \neq 0$.
\\
\hline
$\langle X_{\alpha},  X_{\alpha+\beta}\rangle$  & $\begin{pmatrix}
a & b & c\\
0 & d& e\\
0&f&g
\end{pmatrix}$, $a (dg-ef)=1$.
\\
\hline
$\langle X_{\alpha},  \lambda H_\alpha+(2\lambda+1)H_\beta \rangle$  & $\begin{pmatrix}
a & b & 0\\
0 & c& 0\\
0&0&d
\end{pmatrix}$, $acd=1$, if $\lambda \neq -\frac{2}{3}, -\frac{1}{3}$.
  \\ 
 & $\begin{pmatrix}
a & b & 0\\
0 & c& 0\\
0&e&d
\end{pmatrix}$, $acd=1$, if $\lambda= -\frac{2}{3}$.\\
   & $\begin{pmatrix}
a & b & e\\
0 & c& 0\\
0&0&d
\end{pmatrix}$, $acd=1$, if $\lambda= -\frac{1}{3}$.
 \\
\hline
\end{tabular}
\end{table}

\end{tiny}

 \begin{tiny}

\begin{table} [H]\renewcommand{\arraystretch}{1.6} \caption{Stabilizers of Three-Dimensional Solvable Subalgebras of $\mathfrak{sl}(3,\mathbb{C})$ } \label{tab:stab3}
\begin{tabular}{|c|c|c|clclclc|c|} 
\hline
Subalgebra & Stabilizer\\
\hline \hline
$ \langle X_\alpha, X_{\alpha+\beta}, (\lambda-1)H_{\alpha}+\lambda H_\beta \rangle$, $\lambda \neq \pm 1$ & 
$\begin{pmatrix}
a & b & c\\
0 & d& 0 \\
0&0&e
\end{pmatrix}$,  $ade=1$.
\\
\hline
$ \langle X_\alpha, X_{\alpha+\beta}, 2H_{\alpha}+H_\beta \rangle$ & 
$\begin{pmatrix}
a & b & c\\
0 & d& e \\
0&f&g
\end{pmatrix}$,  $a(dg-ef)=1$.
\\
\hline
$ \langle X_\alpha, Y_\beta, H_{\alpha}+\lambda H_\beta \rangle$, $\lambda \neq \pm1$ & 
$\begin{pmatrix}
a & b & 0\\
0 & c& 0 \\
0&d&e
\end{pmatrix}$,  $ace=1$. \\
\hline
$ \langle X_\alpha, X_\beta, X_{\alpha+\beta} \rangle$ & 
$\begin{pmatrix}
a & b & c\\
0 & d & e \\
0&0&f
\end{pmatrix}$,  $adf=1$. \\
\hline
$ \langle X_\alpha, Y_{\beta}, H_\alpha+H_\beta \rangle$ & $\begin{pmatrix}
a & b & 0\\
0 & c & 0 \\
0&d&e
\end{pmatrix}$,  $ace=1$, or 
 $\begin{pmatrix}
0 & a & b\\
0 & c & 0 \\
d&e&0
\end{pmatrix}$,  $-bcd=1$
 \\
\hline
$ \langle X_\alpha+X_\beta, X_{\alpha+\beta}, H_\alpha+H_\beta \rangle$ & $\begin{pmatrix}
a^2b & -abc & d\\
0 & ab & -bc \\
0&0&b
\end{pmatrix}$,  $a^3b^3=1$. \\
\hline
$ \langle X_\alpha, X_{\alpha+\beta}, 2H_\alpha+H_\beta+X_\beta \rangle$ & $\begin{pmatrix}
\frac{1}{a^2} & b & c\\
0 & a & d\\
0&0&a
\end{pmatrix}$, $a\neq 0$. \\
\hline
$ \langle Y_\alpha, Y_{\alpha+\beta}, 2H_\alpha+H_\beta+X_\beta \rangle$ & $\begin{pmatrix}
\frac{1}{a^2} & 0 & 0\\
b & a & c\\
d&0&a
\end{pmatrix}$, $a\neq 0$. \\
\hline
$ \langle X_\alpha, Y_{\beta}, H_\alpha-H_\beta \rangle$ & $\begin{pmatrix}
a & b & c\\
0 & d & 0\\
e&f&g
\end{pmatrix}$, $d(ag-ce)=1$. \\
\hline
$\langle X_\alpha, H_\alpha, H_\beta \rangle$ & $\begin{pmatrix}
a & b & 0\\
0 & c & 0\\
0&0&d
\end{pmatrix}$, $acd=1$.\\
\hline
$\langle X_\alpha, X_{\alpha+\beta}, H_\beta \rangle$ & 
$\begin{pmatrix}
a & b & c\\
0 & d & 0\\
0&0&e
\end{pmatrix}$, $ade =1$; or $\begin{pmatrix}
a & b & c\\
0 & 0 & d\\
0&e&0
\end{pmatrix}$, $ade=-1$.\\
\hline
\end{tabular}
\end{table}

\end{tiny}

\begin{tiny}

\begin{table} [H]\renewcommand{\arraystretch}{1.6} \caption{Stabilizers of Three-Dimensional Semisimple Subalgebras of $\mathfrak{sl}(3,\mathbb{C})$ } \label{tab:stab4}
\begin{tabular}{|c|c|c|clclclc|c|} 
\hline
Subalgebra & Stabilizer\\
\hline \hline
$\langle X_\alpha+X_\beta, 2Y_\alpha+2Y_\beta, 2H_\alpha+2H_\beta\rangle$ & 
$\mathrm{PSL}(2,\C)\times \mu_3$ where \\
& the Lie algebra of $\mathrm{PSL}(2,\C)$ is exactly the subalgebra considered here\\
& and $\mu_3$ is the subgroup consisting of $\diag(\omega,\omega,\omega)$ with $\omega^3=1$
\\
\hline
$\langle X_{\alpha+\beta}, Y_{\alpha+\beta}, H_\alpha+H_\beta \rangle$ & 
$\begin{pmatrix}
a & 0 & b\\
0 & c & 0\\
d&0&e
\end{pmatrix}$, $c(ae-bd)=1$.\\
\hline
\end{tabular}
\end{table}

\end{tiny}

\begin{small}

\begin{table} [H]\renewcommand{\arraystretch}{1.6} \caption{Stabilizers of Four-Dimensional Subalgebras of $\mathfrak{sl}(3,\mathbb{C})$ } \label{tab:stab5}
\begin{tabular}{|c|c|c|clclclc|c|} 
\hline
Subalgebra & Stabilizer\\
\hline \hline
$\langle X_\alpha, X_{\alpha+\beta}, H_\alpha, H_\beta \rangle$ & $\begin{pmatrix}
a & b & c\\
0 & d & 0\\
0&0&e
\end{pmatrix}$, $ade=1$, or
 $\begin{pmatrix}
a & b & c\\
0 & 0 & d\\
0&e&0
\end{pmatrix}$, $ade=-1$.
 \\
\hline
$\langle X_\alpha, X_\beta, X_{\alpha+\beta}, H_\alpha \rangle$ & $\begin{pmatrix}
a & b & c\\
0 & d & e\\
0&0&f
\end{pmatrix}$, $adf=1$.
 \\
\hline
$\langle X_\alpha, X_\beta, X_{\alpha+\beta}, H_\alpha-H_\beta \rangle$ & $\begin{pmatrix}
a & b & c\\
0 & d & e\\
0&0&f
\end{pmatrix}$, $adf=1$.
 \\
\hline
$\langle X_\alpha, X_\beta, X_{\alpha+\beta}, H_\alpha+H_\beta \rangle$ & 
$\begin{pmatrix}
a & b & c\\
0 & d & e\\
0&0&f
\end{pmatrix}$, $adf=1$.
 \\
\hline
$\langle X_\alpha, X_\beta, X_{\alpha+\beta}, H_\alpha \rangle$ & $\begin{pmatrix}
a & b & c\\
0 & d & e\\
0&0&f
\end{pmatrix}$, $adf=1$.
 \\
\hline
$\langle X_\beta, Y_\alpha, Y_{\alpha+\beta}, 2H_\alpha+H_\beta \rangle$ & $\begin{pmatrix}
a & 0 & 0\\
b & c & d\\
e&0&f
\end{pmatrix}$, $acf=1$.
 \\
\hline
$\langle X_\alpha, Y_\beta, H_{\alpha}, H_\beta \rangle$ & $\begin{pmatrix}
a & b & 0\\
0 & c & 0\\
0&d&e
\end{pmatrix}$, $ace=1$; or 
 $\begin{pmatrix}
0 & a & b\\
0 & c & 0\\
d&e&0
\end{pmatrix}$, $dbc=-1$.
 \\
\hline
$\langle X_\alpha, X_\beta, X_{\alpha+\beta}, \lambda H_\alpha+H_\beta \rangle$, $\lambda \neq \pm 1$ & 
 $\begin{pmatrix}
a & b & c\\
0 & d & e\\
0&0&f
\end{pmatrix}$, $abe=1$.
 \\
\hline
$\langle X_{\alpha+\beta}, Y_{\alpha+\beta}, H_\alpha+H_\beta\rangle \oplus \langle  H_\alpha-H_\beta \rangle$ & 
 $\begin{pmatrix}
a & 0 & b\\
0 & c & 0\\
d&0&e
\end{pmatrix}$, $c(ae-bd)=1$.
\\
\hline 
\end{tabular}
\end{table}

\end{small}

\begin{table} [H]\renewcommand{\arraystretch}{1.6} \caption{Stabilizers of Five-Dimensional Subalgebras of $\mathfrak{sl}(3,\mathbb{C})$ }\label{tab:stab6}
\begin{tabular}{|c|c|c|clclclc|c|} 
\hline
Subalgebra & Stabilizer\\
\hline \hline
$\langle X_\alpha, X_\beta, X_{\alpha+\beta}, H_\alpha, H_\beta \rangle$ & $\begin{pmatrix}
a & b & c\\
0 & d & e\\
0&0&f
\end{pmatrix}$, $adf=1$. \\ 
\hline
$\langle X_{\alpha+\beta}, Y_{\alpha+\beta}, H_\alpha+H_\beta\rangle \inplus \langle X_\alpha, Y_\beta\rangle$ & 
$\begin{pmatrix}
a & b & c\\
0 & d & 0\\
e&f&g
\end{pmatrix}$, $d(ag-ce)=1$.
\\
\hline
$\langle X_{\alpha+\beta}, Y_{\alpha+\beta}, H_\alpha+H_\beta\rangle \inplus \langle X_\beta, Y_\alpha\rangle$ & 
$\begin{pmatrix}
a & 0 & b\\
c & d & e\\
f&0&g
\end{pmatrix}$, $d(ag-bf)=1$.
\\
\hline
\end{tabular}
\end{table}

\begin{table} [H]\renewcommand{\arraystretch}{1.6} \caption{Stabilizers of\textit{ } Six-Dimensional Subalgebras of $\mathfrak{sl}(3,\mathbb{C})$ } \label{tab:stab7}
\begin{tabular}{|c|c|c|clclclc|c|}
\hline
Subalgebra & Stabilizer\\
\hline \hline
$\langle X_{\alpha+\beta}, Y_{\alpha+\beta}, H_\alpha+H_\beta\rangle \inplus \langle X_\alpha, Y_\beta, H_\alpha-H_\beta\rangle$ & 
$\begin{pmatrix}
a & b & c\\
0 & d & 0\\
e&f&g
\end{pmatrix}$, $d(ag-ce)=1$.
\\
\hline
$\langle X_{\alpha+\beta}, Y_{\alpha+\beta}, H_\alpha+H_\beta\rangle \inplus \langle X_\beta, Y_\alpha, H_\alpha-H_\beta\rangle$ &
$\begin{pmatrix}
a & 0 & b\\
c & d & e\\
f&0&g
\end{pmatrix}$, $d(ag-bf)=1$.
 \\
\hline
\end{tabular}
\end{table}


\begin{thebibliography}{99}

\bibitem{borovoi20}
M.~Borovoi,
Real points in a homogeneous space of a real algebraic group,
\textit{arXiv preprint} \texttt{arXiv:2106.14871 [math.AG]}, 2020.

\bibitem{bg}
M.~Borovoi and W.~A.~de~Graaf,
Computing Galois cohomology of a real linear algebraic group,
\textit{J. Lond. Math. Soc.} (2) \textbf{109} (2024), no.~5, Paper No.~e12906, 53~pp.

\bibitem{bgl}
M.~Borovoi, W.~A.~de~Graaf, and H.~V.~L\^e,
Real graded Lie algebras, Galois cohomology, and classification of trivectors in~$\mathbb{R}^9$,
\textit{arXiv preprint} \texttt{arXiv:2106.00246 [math.RT]}, 2021.

\bibitem{boson}
F.~Iachello and A.~Arima,
\textit{The Interacting Boson Model},
Cambridge Monogr. Math. Phys., Cambridge Univ. Press, 1987.

\bibitem{bou9}
N.~Bourbaki,
\textit{\'El\'ements de math\'ematique. Groupes et alg\`ebres de Lie. Chapitre~9},
Masson, Paris, 1982.

\bibitem{ddg}
A.~Douglas and W.~A.~de~Graaf,
Galois cohomology and the subalgebras of the special linear algebra~$\mathfrak{sl}_3(\mathbb{R})$,
\textit{arXiv preprint} \texttt{arXiv:2502.00810 [math.RA]}, 2024.

\bibitem{ddg2}
A.~Douglas and W.~A.~de~Graaf,
The subalgebras of the generalized special unitary algebra~$\mathfrak{su}(2,1)$,
\textit{arXiv preprint} \texttt{arXiv:2504.17942 [math.RA]}, 2025.

\bibitem{dr16a}
A.~Douglas and J.~Repka,
The subalgebras of~$A_2$,
\textit{arXiv preprint} \texttt{arXiv:1509.00932 [math.RA]}, 2015, updated 2024.

\bibitem{dr16b}
A.~Douglas and J.~Repka,
A classification of the subalgebras of~$A_2$,
\textit{J. Pure Appl. Algebra} \textbf{220} (2016), no.~6, 2389--2413.

\bibitem{dr18}
A.~Douglas and J.~Repka,
Subalgebras of the rank two semisimple Lie algebras,
\textit{Linear Multilinear Algebra} \textbf{66} (2018), no.~10, 2049--2075.

\bibitem{dr25}
A.~Douglas and J.~Repka,
Narrow and wide regular subalgebras of semisimple Lie algebras,
\textit{J. Algebra} \textbf{664} (2025), 348--361.

\bibitem{gowa}
R.~Goodman and N.~R.~Wallach,
\textit{Symmetry, Representations, and Invariants},
Grad. Texts in Math., vol.~255, Springer, Dordrecht, 2009.

\bibitem{haydon}
P.~E.~Haydon,
\textit{Symmetry Methods for Differential Equations},
Cambridge Univ. Press, 2000.

\bibitem{janisse}
T.~Janisse,
\textit{The Real Subalgebras of $\mathfrak{so}_4(\mathbb{C})$ and $G_{2(2)}$},
Doctoral dissertation, Univ. of Toronto, 2023.

\bibitem{olver}
P.~J.~Olver,
\textit{Applications of Lie Groups to Differential Equations},
Springer-Verlag, 1986.

\bibitem{pw77}
J.~Patera and P.~Winternitz,
Subalgebras of real three- and four-dimensional Lie algebras,
\textit{J. Math. Phys.} \textbf{18} (1977), no.~7, 1449--1455.

\bibitem{pwz}
J.~Patera, P.~Winternitz, and H.~Zassenhaus,
Maximal Abelian subalgebras of real and complex symplectic Lie algebras,
\textit{J. Math. Phys.} \textbf{24} (1983), no.~8, 1973--1985.

\bibitem{sansuc}
J.-J.~Sansuc,
Groupe de Brauer et arithm\'etique des groupes alg\'ebriques lin\'eaires sur un corps de nombres,
\textit{J. Reine Angew. Math.} \textbf{327} (1981), 12--80.

\bibitem{serre}
J.-P.~Serre,
\textit{Galois Cohomology},
Springer-Verlag, Berlin, 1997.

\bibitem{sophus}
S.~Lie,
\textit{Klassifikation und Integration von Gew\"ohnlichen Differentialgleichungen zwischen $x$, $y$, die eine Gruppe von Transformationen Gestatten},
\textit{Gesammelte Abhandlungen}, vol.~5, Teubner, Leipzig, 1924.

\bibitem{sug}
M.~Sugiura,
Conjugate classes of Cartan subalgebras in real semi-simple Lie algebras,
\textit{J. Math. Soc. Jpn.} \textbf{11} (1959), 374--434.

\bibitem{win04}
P.~Winternitz,
Subalgebras of Lie algebras: Example of~$\mathfrak{sl}(3,\mathbb{R})$,
in: \textit{Symmetry in Physics}, CRM Proc. Lecture Notes, vol.~34, Amer. Math. Soc., 2004, pp.~215--227.

\end{thebibliography}
\end{document}